\documentclass[11pt,reqno,oneside]{amsart}

\usepackage{graphicx,subfigure,threeparttable}
\usepackage{amssymb}
\usepackage{epstopdf}

\usepackage[a4paper, total={6in, 9in}]{geometry}

\usepackage{booktabs} 
\usepackage{subfig} 

\usepackage{amsfonts}

\usepackage{amsmath,amsthm,mathrsfs,amssymb,cite}
\usepackage[usenames]{color}

\newtheorem{thm}{Theorem}[section]

\newtheorem{lem}{Lemma}[section]

\theoremstyle{definition}
\newtheorem{defn}{Definition}[section]

\theoremstyle{remark}

\newtheorem{rem}{Remark}[section]
\numberwithin{equation}{section}

\numberwithin{equation}{section}

\newcounter{saveeqn}


\newcommand{\eqnref}[1]{(\ref {#1})}

\newcommand{\beq}{\begin{equation}}
\newcommand{\eeq}{\end{equation}}

\title[Surface-localized transmission eigenstates and applications]{Surface-localized transmission eigenstates, super-resolution imaging and pseudo surface plasmon modes}

\author{Yat Tin Chow}
\address{Department of Mathematics, University of California, Riverside, USA}
\email{yattinc@ucr.edu}

\author{Youjun Deng}
\address{School of Mathematics and Statistics, Central South University, Changsha, Hunan, China}
\email{youjundeng@csu.edu.cn; dengyijun\_001@163.com}

\author{Youzi He}
\address{Department of Mathematics, Hong Kong Baptist University, Kowloon, Hong Kong, China}
\email{18481469@life.hkbu.edu.hk}

\author{Hongyu Liu}
\address{Department of Mathematics, City University of Hong Kong, Kowloon, Hong Kong, China}
\email{hongyu.liuip@gmail.com, hongyliu@cityu.edu.hk}

\author{Xianchao Wang}
\address{Department of Mathematics, City University of Hong Kong, Kowloon, Hong Kong, China}
\email{xcwang90@gmail.com}

\date{} 
\begin{document}

\maketitle

\begin{abstract}

We present the discovery of a novel and intriguing global geometric structure of the (interior) transmission eigenfunctions associated with the Helmholtz system. It is shown in generic scenarios that there always exists a sequence of transmission eigenfunctions with the corresponding eigenvalues going to infinity such that those eigenfunctions are localized around the boundary of the domain. We provide a comprehensive and rigorous justification in the case within the radial geometry, whereas for the non-radial case, we conduct extensive numerical experiments to quantitatively verify the localizing behaviours. The discovery provides a new perspective on wave localization. As significant applications, we develop a novel inverse scattering scheme that can produce super-resolution imaging effects and propose a method of generating the so-called pseudo surface plasmon resonant (PSPR) modes with a potential sensing application.

\medskip

\noindent{\bf Keywords:}~~ transmission eigenfunctions; wave localization; super-resolution imaging; surface plasmon resonance; sensing.

\noindent{\bf 2010 Mathematics Subject Classification:}~~35P25, 58J50, 35R30, 78A40

\end{abstract}

\section{Introduction}

We start with the mathematical formulation of the interior transmission eigenvalue problems. Let $\Omega$ be a bounded Lipschitz domain in $\mathbb{R}^d, d=2, 3$, with a connected complement $\mathbb{R}^d\backslash\overline{\Omega}$. Let $\mathbf{n}(x)\in L^\infty(\Omega)$ and $k\in\mathbb{R}_+$. Consider the following system of partial differential equations (PDEs) for $w \in H^1(\Omega)$ and $v\in H^1(\Omega)$:
\begin{equation}\label{eq:trans1}
\left\{
\begin{array}{ll}
\Delta w+k^2\mathbf{n}^2(x)w=0  &\text{in} \ \Omega,\medskip \\
\Delta v+k^2 v =0 &\text{in} \ \Omega, \medskip \\
\displaystyle{w=v,\ \ \frac{\partial w}{\partial\nu}=\frac{\partial v}{\partial\nu} } &\text{on} \ \partial \Omega, \\
\end{array}
\right.
\end{equation}
where $\nu$ is the exterior unit normal vector to $\partial\Omega$. \eqref{eq:trans1} is referred to as the interior transmission eigenvalue problem associated with the Helmholtz equation. Physically, $\mathbf{n}$ signifies the refractive index of an inhomogeneous medium supported in $\Omega$ and $k\in\mathbb{R}_+$ signifies a wavenumber. Clearly, $w\equiv v\equiv 0$ are a pair of trivial solutions to \eqref{eq:trans1}. If there exists a non-trivial pair of solutions $(w, v)$ to \eqref{eq:trans1}, $k\in\mathbb{R}_+$ is called a transmission eigenvalue, and $w, v$ are the associated transmission eigenfunctions.
The transmission eigenvalue problem arises in the scattering theory of time-harmonic waves and was first proposed in \cite{Kir} in the context of a reconstruction scheme for the inverse scattering problem. It also relates to the non-scattering phenomenon, a.k.a invisibility cloaks \cite{BL1, BL2, BPS}. An alternative formulation of the transmission eigenvalue problem is given for $w_0\in H_0^2(\Omega)$ as:
\begin{equation}\label{eq:transn2}
\left(\Delta+k^2\mathbf{n}^2\right)\left(\Delta+k^2 \right)w_0=0\quad\mbox{in}\ \ \Omega.
\end{equation}
In fact, it can be shown that if $w, v\in H^2(\Omega)$ in \eqref{eq:trans1}, then $w_0:=w-v\in H_0^2(\Omega)$ satisfies the transmission eigenvalue problem \eqref{eq:transn2}. The transmission eigenvalue problem is non self-adjoint, non-elliptic and nonlinear (in the sense of the formulation \eqref{eq:transn2} which is quadratic in terms of $k^2$). Hence, its study is practically important and mathematically challenging.

The study of the transmission eigenvalue problem has a long and colourful history. The spectral properties of the transmission egienvalues have been intensively and extensively studied in the literature. Roughly speaking, the spectral properties of the transmission eigenvalues resemble those of the classical Dirichlet/Neumann Laplacian in many aspects. In fact, under certain generic conditions on $\mathbf{n}$, particularly including the case with $\mathbf{n}\neq 1$ being a positive constant, it is known that there exists an infinite and discrete set of eigenvalues satisfying $0<k_1\leq k_2\leq\cdots \leq k_{\ell}\leq\cdots\rightarrow +\infty$, with $+\infty$ the only accumulating point. For each eigenvalue, the corresponding eigenspace is finite dimensional. We refer to \cite{CCH, CK, PS} and the references therein for the related studies of the aforementioned and other properties of transmission eigenvalues. Recently, it is revealed in \cite{Blasten18a, BLin, BL1, BL2, BL3, BXL, CX, DCL} that the transmission eigenfunctions possess rich and peculiar geometric structures; see also a recent survey paper \cite{Liu} for more related discussions. The studies indicate that near a point on $\partial\Omega$ where the magnitude of the extrinsic curvature is sufficiently large, the transmission eigenfunctions must be nearly vanishing. In particular, in the extreme case where the high-curvature part degenerates to become a corner, then under a certain mild regularity conditions on the transmission eigenfunctions locally around the corner, they must be vanishing near the corner point. The regularity conditions are characterised by the H\"older-continuity of the transmission eigenfunctions or a certain blowup rate of the Herglotz kernels via the Herglotz approximations of the transmission eigenfunctions (cf. \cite{BL1, BL2, BL3, DCL}). It is noted that in \eqref{eq:trans1},  only $H^1$-regularity is imposed on the transmission eigenfunctions and by the standard Sobolev embedding, the H\"older-continuity of the transmission eigenfunctions is not always fulfilled. It is numerically shown in \cite{BL3} that if the required regularity conditions are not fulfilled, the singularity of the transmission eigenfunctions around the corner point leads to a certain localizing phenomenon locally around the corner point. Nevertheless, it is pointed out that in the physical situation where the transmission eigenfunctions are connected to the acoustic wave scattering problems, the regularity conditions are fulfilled and hence one always has the locally vanishing properties; see \cite{Liu} for more related discussion on this aspect. In fact, the vanishing properties have been used in establishing several novel unique recovery results for the inverse scattering problems in different scenarios by a single far-field measurement \cite{Blasten18a,BL2,BL4, BXL,CX,CDL, DCL}.

The purpose of the present article is twofold. First, we present an intriguing discovery of a certain global geometric property of the transmission eigenfunctions. It is clear that the geometric structures of the transmission eigenfunctions discussed earlier are of local features. In this paper, we show that under generic scenarios, either the transmission eigenfunction $w$ or $v$ is localized on the boundary surface in $\mathbb{R}^3$ or the boundary curve in $\mathbb{R}^2$. That is, the energy of $w$ (resp. $v$) is localized around $\partial\Omega$ and barely enters into the bulk $\Omega$. For terminological convenience, those peculiar eigen-modes are referred to as the surface-localized eigenstates (SLEs). More precisely, it is shown that if $\mathbf{n}>1$, there exists a sequence of $v_j, j=1, 2, \ldots$, which are SLEs such that the corresponding eigenvalues $k_j\rightarrow +\infty$, and if $0<\mathbf{n}<1$, the same geometric property holds for the transmission eigenfunction $w$. In the case that $\Omega$ is a ball in $\mathbb{R}^d$ and $\mathbf{n}$ is a constant, we rigorously justify such a spectral property, whereas for the general geometry with a variable $\mathbf{n}$, we conduct extensive numerical experiments to verify this spectral property. The reasons for us to do so are as follows. For the radial geometry and constant $\mathbf{n}$, we can have a thorough understanding of the SLEs for the transmission eigenvalue problem \eqref{eq:trans1}, whereas for the general case,
we can only derive a qualitative understanding of the SLEs, which we shall present in a forthcoming paper \cite{CHLW}. The numerical examples in this paper not only verify the existence of the SLEs in the generic scenario, but also demonstrate their intriguing quantitative behaviours. It is shown that the existence of SLEs are topologically robust against large deformation or even twisting of the boundary surface/curve $\partial \Omega$. However, the SLEs themselves are topologically sensitive to the change of the boundary.  Our study unveils a significant physical phenomenon that is completely unknown before. Moreover, it provides a new perspective on wave localization, which is a central topic to many practical applications including surface plasmon resonances \cite{BS, FM, Kli, OI, Z}, topologically robust states in quantum Hall effect \cite{Elg, Hat, Tho}, directional optical waveguide \cite{Hal, Hald}, photonic transport \cite{Kha, RZ, Wang}, and cloaking due to anomalous localized resonance \cite{Amm2, LL, MN}. The second purpose of this paper is to propose two novel applications of the newly discovered SLEs including producing a super-resolution imaging scheme for the inverse acoustic scattering problem and generating the so-called pseudo surface plasmon resonant (PSPR) modes with a potential sensing application. We choose to first focus on the SLEs for the transmission eigenvalue problem and shall present more background introduction on inverse scattering problems and plasmon resonances in Sections 3 and 4.

The rest of the paper is organized as follows. In Section 2, we present the SLEs with both theoretical and numerical verifications in different scenarios. In Section 3, we consider the application of SLEs for inverse scattering imaging. In Section 4, the application of SLEs for pseudo plasmon resonances is proposed. The paper is concluded in Section 5 with some relevant discussions.

\section{Surface-localized transmission eigenstates}\label{sect:2}

In this section, we first give the definite description of the SLEs. Next, we illustrate the existence of the SLEs for a special case theoretically. Then, for general situations, we provide extensive numerical examples to verify the existence of the SLEs and show their intriguing quantitative behaviours.

\begin{defn}
Consider a function $w\in L^2(\Omega)$. It is said to be surface-localized if there exists a sufficiently small $\epsilon_0\in\mathbb{R}_+$ such that
\begin{equation*}
\frac{\|w\|_{L^2(\mathcal{N}_{\epsilon_0}(\partial\Omega))}}{\|w\|_{L^2(\Omega)}}=1-\mathcal{O}(\epsilon_0),
\end{equation*}
where
\begin{equation*}
\mathcal{N}_{\epsilon_0}(\partial\Omega):=\{x\in\Omega;\ \mathrm{dist}(x, \partial\Omega)<\epsilon_0\}.
\end{equation*}
\end{defn}

\subsection{Spherical geometry and constant $\mathbf{n}$ }

In this part, we let $\Omega:=\{x\in \mathbb{R}^d: |x|<r_0\in\mathbb{R}_+\},\ d=2,3,$ and $\mathbf{n}$ be a positive constant. For this case, we provide a comprehensive and accurate characterisation of the SLEs. To that end, we let $m\in \mathbb{N}$ be a positive integer, $J_m(|x|)$ be the first kind Bessel function of order $m$, and $J_m^{'}(|x|)$ be the derivative of $J_m(|x|)$. We also let $j_{m,s}$ denote the $s$-th positive zero of $J_m(|x|)$, and $j_{m,s}^{'}$ be the $s$-th positive zero of $J_m^{'}(|x|)$.
Recalling  from \cite[Section 9.5, p. 370]{Abramowitz}, one has
\begin{align}
   \label{eq:zeros_derivative}&  m\leq  j_{m,1}^{'}<j_{m,1}<j_{m,2}^{'}<j_{m,2}<j_{m,3}^{'}< \cdots,\\
   \label{eq:Bessel}&J_{m}(|x|)=\frac{(|x|/2)^m}{\Gamma(m+1)}\mathop{\Pi} \limits_{s=1}^{\infty}\left(1-\frac{|x|^2}{j_{m,s}^2}   \right).
  \end{align}

To prove the localizing phenomena, we suppose that the order $m$ of the Bessel function $J_m(x)$ is sufficiently large, and choose two sequences of integers $s$ and $s'$ as follows:
\beq\label{eq:choices01}
s(m):=[m^{\gamma_1}], \quad s'(m)=[m^{\gamma_2}], \quad 0<\gamma_1<\gamma_2<1,
\eeq
where $[t]$ signifies the rounding of a real number $t$, namely $t=[t]+\epsilon_t$ with $0<\epsilon_t<1$.

\begin{lem}\label{lem:zeros}
Let  $\Omega=\{x\in \mathbb{R}^d: |x|<r_0\in\mathbb{R}_+\},\, d=2,3,$ and $\mathbf{n}>1$ be a constant. Then the transmission eigenfunctions of \eqnref{eq:trans1} are given in terms of the Bessel  functions.
Let $k_{m, \ell}$, $\ell=1, 2, \ldots$, be the transmission eigenvalues of \eqnref{eq:trans1}, where $m$ is the order of the Bessel  function. Then there exists a subsequence of $\{k_{m, \ell}\}$, denoted as $\{k_{m,s(m)}\}$, such that for $m$ sufficiently large, it holds that
\beq\label{eq:eigspan01}
k_{m,s}\in (j_{m,s(m)}, j_{m,s'(m)}),
\eeq
where $s(m)$ and $s'(m)$ are defined in \eqnref{eq:choices01}. Moreover, one has
\beq\label{eq:leasymk01}
\frac{k_{m,s}}{m}\rightarrow 1, \quad \mbox{as} \quad m\rightarrow \infty.
\eeq
More specifically, there exists $\varsigma\in (-2/3, 0)$ such that
\beq\label{eq:leasymk02}
k_{m,s}=m(1+m^{\varsigma}+o(m^{\varsigma})).
\eeq
\end{lem}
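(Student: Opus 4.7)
The plan is to exploit the radial symmetry: for $\Omega=B_{r_0}$ and constant $\mathbf{n}$, separation of variables reduces \eqref{eq:trans1} in the $m$-th Fourier ($d=2$) or spherical-harmonic ($d=3$) mode to an explicit characteristic equation for Bessel functions. Writing $v$ and $w$ in the $m$-th mode as $\alpha J_m(kr)$ and $\beta J_m(\mathbf{n} kr)$ respectively (in $d=3$, after using $j_l(r)=\sqrt{\pi/(2r)}J_{l+1/2}(r)$ to reduce spherical Bessel functions to ordinary Bessel functions of half-integer order), the interface conditions $w=v$ and $\partial_r w=\partial_r v$ at $r=r_0$ form a $2\times 2$ homogeneous linear system in $(\alpha,\beta)$, whose nontrivial solvability is equivalent to the vanishing of
\begin{equation*}
F_m(k)\;:=\;\mathbf{n}\,J_m'(\mathbf{n} kr_0)\,J_m(kr_0)\;-\;J_m(\mathbf{n} kr_0)\,J_m'(kr_0).
\end{equation*}
Hence the set of transmission eigenvalues $\{k_{m,\ell}\}_{\ell\ge 1}$ coincides with the set of positive zeros of $F_m$.

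To trap a root in the interval $(j_{m,s(m)},j_{m,s'(m)})$ (normalising $r_0=1$), I would evaluate $F_m$ at the Dirichlet zeros,
\begin{equation*}
F_m(j_{m,\ell})=-\,J_m(\mathbf{n} j_{m,\ell})\,J_m'(j_{m,\ell}),
\end{equation*}
and compute signs using \eqref{eq:zeros_derivative} and \eqref{eq:Bessel}: $\mathrm{sgn}\,J_m'(j_{m,\ell})=(-1)^\ell$, and $\mathrm{sgn}\,J_m(\mathbf{n} j_{m,\ell})=(-1)^{\sigma_\ell}$, where $\sigma_\ell:=\#\{s\ge 1:j_{m,s}<\mathbf{n} j_{m,\ell}\}$, so that $\mathrm{sgn}\,F_m(j_{m,\ell})=(-1)^{\ell+\sigma_\ell+1}$. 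Since $\mathbf{n}>1$ and $s'(m)-s(m)=[m^{\gamma_2}]-[m^{\gamma_1}]\to\infty$, a counting argument shows that the parity of $\ell+\sigma_\ell$ takes both values as $\ell$ sweeps $[s(m),s'(m)]$, so $F_m$ must change sign on this range. The intermediate value theorem then delivers a transmission eigenvalue $k_{m,s}\in(j_{m,s(m)},j_{m,s'(m)})$, establishing \eqref{eq:eigspan01}.

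For the quantitative statements \eqref{eq:leasymk01}--\eqref{eq:leasymk02} I would invoke Olver's uniform asymptotic expansion for zeros of $J_m$ of large order: whenever $s/m\to 0$,
\begin{equation*}
j_{m,s}=m+a_s\,m^{1/3}+O(m^{-1/3}),\qquad a_s\sim c_0\, s^{2/3}\ \text{as}\ s\to\infty,
\end{equation*}
where $a_s$ is (up to a positive constant) the modulus of the $s$-th negative zero of the Airy function. Substituting $s=[m^{\gamma}]$ with $\gamma\in(0,1)$ gives
\begin{equation*}
j_{m,[m^\gamma]}=m\bigl(1+C_0\,m^{(2\gamma-2)/3}+o(m^{(2\gamma-2)/3})\bigr),
\end{equation*}
whose correction exponent $(2\gamma-2)/3$ lies in $(-\tfrac23,0)$. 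Sandwiching $k_{m,s}$ between $j_{m,s(m)}$ and $j_{m,s'(m)}$ immediately yields $k_{m,s}/m\to 1$, and matching the leading corrections at the two endpoints produces the sharper form \eqref{eq:leasymk02} with some $\varsigma\in(-\tfrac23,0)$.

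The main obstacle is the sign-count in the second step: one must verify quantitatively that $\ell+\sigma_\ell$ actually changes parity somewhere in $[s(m),s'(m)]$. Since $\sigma_{\ell+1}-\sigma_\ell$ equals the number of zeros of $J_m$ lying in $(\mathbf{n} j_{m,\ell},\mathbf{n} j_{m,\ell+1})$, an integer close to $\mathbf{n}$ but whose parity is not constant in general, ruling out the pathological scenario of a fixed parity throughout the whole range requires a careful pigeonhole argument, combined with Olver's expansion to locate both $j_{m,\ell}$ and the intermediate zeros $j_{m,s}\approx \mathbf{n} j_{m,\ell}$ with sufficient precision.
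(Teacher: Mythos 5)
Your reduction to the characteristic equation $F_m(k)=\mathbf{n}J_m'(\mathbf{n}k)J_m(k)-J_m(\mathbf{n}k)J_m'(k)$, the evaluation at the Dirichlet zeros giving $\mathrm{sgn}\,F_m(j_{m,\ell})=(-1)^{\ell+\sigma_\ell+1}$, and the asymptotics $j_{m,[m^\gamma]}=m\bigl(1+C_0 m^{(2\gamma-2)/3}+o(m^{(2\gamma-2)/3})\bigr)$ are all correct and line up with the paper (which uses the recurrence to write $f_m(k)=J_{m-1}(k)J_m(k\mathbf{n})-\mathbf{n}J_m(k)J_{m-1}(k\mathbf{n})$ and the Qu--Wong bounds on $j_{m,s}$ in place of Olver's expansion). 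The deduction of \eqref{eq:leasymk01}--\eqref{eq:leasymk02} by sandwiching, once \eqref{eq:eigspan01} is known, is also fine.

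However, the pivotal step — showing that $F_m$ actually changes sign somewhere on $[j_{m,s(m)},j_{m,s'(m)}]$ — is exactly what you leave open, and flagging it as ``the main obstacle'' does not discharge it: without ruling out the scenario in which $\ell+\sigma_\ell$ has constant parity throughout the range, no eigenvalue is trapped and \eqref{eq:eigspan01} is unproved. Your heuristic for why the parity must flip is also quantitatively off: $\sigma_{\ell+1}-\sigma_\ell$ counts the zeros of $J_m$ in $(\mathbf{n}j_{m,\ell},\mathbf{n}j_{m,\ell+1})$, and since the spacing of zeros of $J_m$ just above the turning point is $j_{m,\ell+1}-j_{m,\ell}\sim C m^{1/3}\ell^{-1/3}\to\infty$ for $\ell\sim m^{\gamma}$, while the spacing near $\mathbf{n}j_{m,\ell}\approx\mathbf{n}m$ is $\approx\pi/\sqrt{1-\mathbf{n}^{-2}}$, this increment is $\approx\sqrt{\mathbf{n}^2-1}\,(j_{m,\ell+1}-j_{m,\ell})/\pi\to\infty$, not ``close to $\mathbf{n}$''; its parity is therefore delicate to control term by term, which is why a pure pigeonhole argument is not straightforward. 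The paper circumvents this bookkeeping altogether: using the large-argument asymptotics \eqref{eq:approxj02}, it writes $J_{m-1}(j_{m,s'})$ and $J_m(\mathbf{n}j_{m,s'})$ as cosines whose phases depend at different rates on the free exponent $\gamma_2$ (i.e.\ on the choice of $s'$), and then adjusts $\gamma_2$ so that $f_m(j_{m,s})f_m(j_{m,s'})<0$, after which the intermediate value theorem applies — this freedom in the choice of $s'$ within the admissible range is the missing ingredient that would also rescue your parity argument, e.g.\ by showing via the same asymptotics that the sign pattern $(-1)^{\ell+\sigma_\ell}$ cannot be constant for all admissible choices of the endpoint.
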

\begin{proof}
Without loss of generality, we assume that the radius of $\Omega$ is $r_0=1$.
Since $\mathbf{n}$ is a positive constant, we can expand the solutions $w$ and $v$ of the system \eqref{eq:trans1} into Fourier series in terms of the Bessel  functions $J_m(x)$ or the spherical Bessel  functions $j_m(x)$ of the first kind and the spherical harmonics:
\begin{equation*}
\begin{aligned}
  &w(x)=
  \begin{cases}
  \sum\limits_{m=-\infty}^{\infty}\alpha_{m} J_m(k\mathbf{n}|x|)\mathrm{e}^{\mathrm{i}m\theta}, & d=2,\medskip\\
  \sum\limits_{m=0}^{\infty}\sum\limits_{l=-m}^{m}\alpha_{m}^{l} j_m(k\mathbf{n}|x|)Y_m^{l}(\hat x),& d=3,\\
  \end{cases}\\
 & v(x)=\begin{cases}
  \sum\limits_{m=-\infty}^{\infty}\beta_{m} J_m(k|x|)\mathrm{e}^{\mathrm{i}m\theta}, & d=2,\medskip\\
  \sum\limits_{m=0}^{\infty}\sum\limits_{l=-m}^{m}\beta_{m}^{l} j_m(k|x|)Y_m^{l}(\hat x),& d=3,\\
  \end{cases}
  \end{aligned}
  \end{equation*}
 where $ Y_m^{l} $ is a spherical harmonic function of degree $m$ and order $l$. Since
\begin{equation*}
    j_m(|x|)=\sqrt{\frac{\pi}{2|x|}}J_{m+1/2}(|x|),
  \end{equation*}
we only consider  the two-dimensional case and  the three-dimensional case can be proved in a similar manner.

 For a fixed $m\in \mathbb{N}_+$, the solutions of $\Delta w+k^2 \mathbf{n}^2 w=0$ and $\Delta v+k^2 v=0$ in $\Omega$ can be written as
 \begin{equation*}\label{eq:mEigFunc}
 \begin{aligned}
  &w_{k}= \alpha_m J_m(k\mathbf{n}|x|)\mathrm{e}^{\mathrm{i}m\theta}, \\
  & v_{k}=  \beta_{m} J_m(k|x|)\mathrm{e}^{\mathrm{i}m\theta}.
  \end{aligned}
  \end{equation*}
In order to guarantee that $w=v$ on the boundary $\partial \Omega$, i.e., when $|x|=1$,  we choose
\begin{equation*}
  \beta_{m}=\frac{J_m(k\mathbf{n})}{J_m(k)}\alpha_m.
\end{equation*}
Moreover, the transmission eigenvalues $k$'s are determined from the relation:
\begin{equation*}
  \frac{\partial w}{\partial \nu}=\frac{\partial v}{\partial \nu} \quad \mbox{on }\partial \Omega.
\end{equation*}
Thus, with the help of the recurrence relation of the derivatives of the Bessel  functions, the eigenvalues $k$'s are positive zeros of the following function (see \cite{Colton})
\begin{equation*}\label{eq:kFunc}
  f_m(k)=J_{m-1}(k)J_m(k\mathbf{n})-\mathbf{n}J_m(k)J_{m-1}(k\mathbf{n}),\quad m\geq 1.
\end{equation*}
From \eqref{eq:zeros_derivative} and  \eqref{eq:Bessel},  one can deduce that
\begin{equation*}\label{eq:Jm}
J_m(|x|)\geq 0, \quad  |x|\in[0,j_{m,1}^{'}] .
\end{equation*}
Next, we compute the following identity:
\begin{equation*}\label{eq:appfm01}
\begin{aligned}
f_m\left( j_{m,s} \right)f_m\left( j_{m,s'} \right)
&= J_{m-1}\left( j_{m,s} \right)J_m\left( \mathbf{n}j_{m,s} \right) J_{m-1}(j_{m,s'})J_m\left( \mathbf{n}j_{m,s'} \right).
\end{aligned}
\end{equation*}
It can be shown \cite{Qu} that the zeros $j_{m,s}$ of the Bessel  function $J_m(|x|)$ has the following sharp upper and lower bounds
\begin{equation}\label{eq:jms}
  m-\frac{a_s}{2^{1/3}}m^{1/3}<j_{m,s}<m-\frac{a_s}{2^{1/3}}m^{1/3}+\frac{3}{20}a_s^2\frac{2^{1/3}}{m^{1/3}},
\end{equation}
where $a_s$ is the $s$-th negative zero of the Airy function and has the representation
\begin{equation*}
  a_s=-\left(\frac{3\pi}{8}(4s-1)  \right)^{2/3}(1+\sigma_s).
\end{equation*}
Here, $\sigma_s$ can be estimated by
\begin{equation*}
 0\leq \sigma_s\leq0.130\left(\frac{3\pi}{8}(4s-1.051)  \right)^{-2}.
\end{equation*}
By noting the choice of $s(m)$ and $s'(m)$ in \eqnref{eq:choices01} for $m$ sufficiently large, there holds
\beq\label{eq:approxj01}
\begin{split}
j_{m,s}= m(1+C_0 m^{2(\gamma_1-1)/3}+o(m^{2(\gamma_1-1)/3})), \\
 j_{m,s'}=m(1+C_0 m^{2(\gamma_2-1)/3}+o(m^{2(\gamma_2-1)/3})),
\end{split}
\eeq
where $C_0$ is a positive constant.
Note that the Bessel  function admits the following asymptotic formula (see \cite{Kor02}, p 129):
\beq\label{eq:approxj02}
J_m(|x|)=\sqrt{\frac{2}{\pi \sqrt{|x|^2-m^2}}}\cos(\sqrt{|x|^2-m^2}-\frac{m\pi}{2}+m\arcsin(m/|x|)-\frac{\pi}{4})\Big(1+o(1)\Big),
\eeq
for $|x|>m$ and $m\rightarrow \infty$.
By combing \eqnref{eq:approxj01}, \eqnref{eq:approxj02} and some straightforward computations, one obtains
\begin{equation*}
J_m\left( \mathbf{n}j_{m,s} \right)=C_{m,\mathbf{n}}\cos\left(m\big(\sqrt{\mathbf{n}^2-1}-\frac{\pi}{2}+\arcsin\frac{1}{\mathbf{n}}+\mathcal{O}(m^{\varsigma_1})\big)-\frac{\pi}{4})\right)\Big(1+o(1)\Big),
\end{equation*}
where $\varsigma_1:={2(\gamma_1-1)/3}$ and
$$
C_{m,\mathbf{n}}:=\sqrt{\frac{2}{\pi}} \Big(\mathbf{n}^2j_{m,s}^2-m^2\Big)^{-1/4}=\sqrt{\frac{2}{\pi}}\Big((\mathbf{n}^2-1)m\Big)^{-1/4}+\mathcal{O}(m^{2\varsigma_1}).
$$
And similarly,
\begin{equation*}
J_{m-1}\left(j_{m,s'} \right)=C_{m,1}\cos\left(m\mathcal{O}(m^{2\varsigma_2})-\frac{\pi}{4}\right)\Big(1+o(1)\Big),
\end{equation*}
where $\varsigma_2:={2(\gamma_2-1)/3}$ and
$$
C_{m,1}:=\sqrt{\frac{2}{\pi}} \Big(j_{m,s'}^2-m^2\Big)^{-1/4}=\sqrt{\frac{2}{\pi}}\mathcal{O}(m^{-1/2}m^{-\varsigma_2/4}).
$$
Without loss of generality we suppose that
$$J_{m-1}\left( j_{m,s} \right)J_m\left( \mathbf{n}j_{m,s} \right)>0.$$
We next show that there exists at least one choice of $s'=m^{\gamma_2}$ such that
$$J_{m-1}(j_{m,s'})J_m\left( \mathbf{n}j_{m,s'}\right)<0,$$
that is,
\begin{equation}\label{eq:nnn1}
\cos\left(m\mathcal{O}(m^{2\varsigma_2})-\frac{\pi}{4}\right)\cos\left(m\big(\sqrt{\mathbf{n}^2-1}-\frac{\pi}{2}+\arcsin\frac{1}{\mathbf{n}}+\mathcal{O}(m^{\varsigma_2})\big)-\frac{\pi}{4})\right)
<0.
\end{equation}
Indeed, \eqref{eq:nnn1} can be easily realized by modifying $\gamma_2$ (noting that the two cosine functions above are always of different frequencies w.r.t $\varsigma_2(\gamma_2)$). Thus one has
$k_{m,s}\in(j_{m,s(m)}, j_{m,s'(m)})$ and by using \eqnref{eq:approxj01} one has \eqnref{eq:leasymk01}, which completes the proof.
\end{proof}

With the help of Lemma \ref{lem:zeros}, we are able to show the following important results about the existence of the SLEs.

\begin{thm}\label{thm:2.1}
Let  $\Omega=\{x\in \mathbb{R}^d: |x|<r_0\in\mathbb{R}_+\},\, d=2,3,$ and $\mathbf{n}>1$ be a constant. Consider the transmission eigenvalue problem \eqref{eq:trans1}.   Then there exists a subsequence $\{k_{m, s}\}$ of  the transmission eigenvalues $\{k_{\ell}\}$ such that $\infty$ is the only accumulation point of the sequence  $\{k_{m,s}\}$ and the corresponding eigenfunctions $v_{k_{m, s}}$ are surface-localized around the boundary $\partial \Omega$.
\end{thm}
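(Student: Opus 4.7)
The discreteness and unique accumulation of the subsequence at $+\infty$ are immediate from Lemma \ref{lem:zeros}, which gives $k_{m,s}/m\to 1$. The substantive task is the surface localization of $v_{k_{m,s}}$, and I would handle it by exploiting the explicit radial form of the eigenfunctions derived in Lemma \ref{lem:zeros}. Working first in two dimensions and writing $v_{k_{m,s}}=\beta_m J_m(k_{m,s}|x|)e^{\mathrm{i}m\theta}$, in polar coordinates the surface-localization quotient in Definition 2.1 reduces to
\[
R_m(\epsilon_0):=\frac{\int_{1-\epsilon_0}^{1}r\,J_m(k_{m,s}r)^2\,dr}{\int_{0}^{1}r\,J_m(k_{m,s}r)^2\,dr}.
\]
The crucial input from Lemma \ref{lem:zeros} is $k_{m,s}=m(1+m^{\varsigma}+o(m^{\varsigma}))$ with $\varsigma\in(-2/3,0)$, so the Bessel turning point $k_{m,s}r=m$ occurs at $r\approx 1-m^{\varsigma}$; the factor $J_m(k_{m,s}r)$ is evanescent on all of $r\in(0,1-m^{\varsigma})$ and oscillatory only in a thin annular layer of width $\sim m^{\varsigma}$ adjacent to $\partial\Omega$. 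I would therefore take the boundary-layer width to be $\epsilon_0=Cm^{\varsigma}$ for a sufficiently large constant $C>1$, which already satisfies $\epsilon_0\to 0$ as $m\to\infty$.

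For the denominator restricted to $(0,1-\epsilon_0)$ I would invoke the uniform Debye expansion
\[
J_m(mt)\sim\frac{\exp\bigl(-m[\operatorname{arctanh}\sqrt{1-t^2}-\sqrt{1-t^2}]\bigr)}{\sqrt{2\pi m}\,(1-t^2)^{1/4}},\qquad 0<t<1,
\]
together with the fact that the phase vanishes like $(1-t)^{3/2}$ at $t=1$. Because $1+3\varsigma/2>0$ by Lemma \ref{lem:zeros}, this yields an interior contribution of size $\mathcal{O}(e^{-cm^{1+3\varsigma/2}})$ for some $c>0$, i.e.\ exponentially small in $m$. For the numerator I would combine the explicit Lommel identity
\[
\int_0^a r\,J_m(kr)^2\,dr=\frac{a^2}{2}\!\left[J_m'(ka)^2+\Bigl(1-\frac{m^2}{(ka)^2}\Bigr)J_m(ka)^2\right]
\]
applied at $a=1$ and $a=1-\epsilon_0$ and subtracted, with Olver's uniform Airy-type asymptotics for $J_m$ near $z=m$; this controls $J_m(k_{m,s})$ and $J_m'(k_{m,s})$ at polynomial order in $m$ and produces a polynomial lower bound on the boundary-layer integral that dwarfs the exponentially small interior contribution. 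These two estimates together give $R_m(\epsilon_0)=1-\mathcal{O}(\epsilon_0)$ as $m\to\infty$, which is precisely the surface-localization in Definition 2.1. The three-dimensional case follows from the two-dimensional argument via the identity $j_m(r)=\sqrt{\pi/(2r)}\,J_{m+1/2}(r)$ and the orthonormality of spherical harmonics.

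The main obstacle is the behaviour of $J_m$ in the Airy transition zone $k_{m,s}r\approx m$, where neither the evanescent nor the oscillatory Debye expansion is uniformly valid and where the argument actually lies: Olver's Airy-type uniform expansion must be used, together with a careful tracking of implied constants that depend on $\mathbf{n}$ and on $\gamma_1,\gamma_2$, to certify that the polynomial-order boundary-layer mass genuinely dominates the exponentially small interior mass and that the decay rate recovered is precisely $\mathcal{O}(\epsilon_0)$ rather than something weaker.
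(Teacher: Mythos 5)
Your proposal is correct and shares the paper's overall strategy: both arguments exploit the explicit radial form $v_{k_{m,s}}=\beta_m J_m(k_{m,s}|x|)e^{\mathrm{i}m\theta}$, use Lemma \ref{lem:zeros} to place the turning point $k_{m,s}r=m$ at $r\approx 1-m^{\varsigma}$, show the interior mass is exponentially small via the sub-turning-point (Debye) expansion, and bound the near-boundary mass below at polynomial order in $m$. The implementation, however, differs in two genuine ways. First, the paper works with a \emph{fixed} annulus: it fixes $\tau<1$, uses monotonicity of $J_m(k_{m,s}r)$ on $(0,1)$ together with the same Debye formula to get $J_m(k_{m,s}\tau)\lesssim m^{-1/2}(1-\varepsilon)^m$, and bounds $\int_{r_1}^1 J_m^2(k_{m,s}r)\,r\,dr$ from below (with $r_1=j_{m,1}'/k_{m,s}$) by integrating the oscillatory asymptotics \eqref{eq:approxj02} directly and estimating the oscillatory remainder $\mathcal{R}(m)$, concluding that $\|v_{k_{m,s}}\|_{L^2(\Omega_\tau)}^2/\|v_{k_{m,s}}\|_{L^2(\Omega)}^2\to 0$ for each fixed $\tau$; your shrinking layer $\epsilon_0=Cm^{\varsigma}$ gives a sharper, more literal match with Definition 2.1. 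Second, your lower bound replaces the integration of the oscillatory expansion by the Lommel identity evaluated at the two endpoints $a=1$ and $a=1-\epsilon_0$; since $k_{m,s}-m\sim m^{1+\varsigma}\gg m^{1/3}$ (because $\varsigma>-2/3$), both endpoints lie well outside the Airy transition zone, so the classical expansions (the evanescent Debye formula and \eqref{eq:approxj02}) already suffice there, and the Lommel identity in fact spares you from ever evaluating $J_m$ inside the transition zone --- the obstacle you single out is thus less severe than you fear, whereas the paper instead starts its integration at $r_1$ to stay in the oscillatory regime. One point you should make explicit when writing this up: polynomial-order control of $J_m(k_{m,s})$ and $J_m'(k_{m,s})$ \emph{separately} does not yield a lower bound on the Lommel bracket; you need the standard phase (envelope) relation giving $J_m'(k)^2+\bigl(1-m^2/k^2\bigr)J_m(k)^2\approx \frac{2}{\pi}\,\sqrt{k^2-m^2}/k^2$, so that the $\cos^2$ and $\sin^2$ contributions combine without cancellation; this reproduces exactly the paper's $m^{-1+\varsigma/2}$ boundary-layer mass and completes your comparison of exponentially small interior versus polynomially bounded layer.
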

\begin{proof}
Without loss of generality, we assume that the radius of $\Omega$ is $r_0=1$, we only consider  the two-dimensional case and  the three-dimensional case can be discussed similarly.
We choose the sequence of $k_{m,s}$ satisfy \eqnref{eq:eigspan01} such that $s$ and $s'$ are chosen in \eqnref{eq:choices01}, then such sequence $k_{m,s}\rightarrow m$, $m\rightarrow \infty$.
We prove that the corresponding eigenfunctions $v_{k_{m, s}}$ are
surface-localized around the boundary $\partial \Omega$. Let $\Omega_{\tau}:=\{x: |x|<\tau, \ \tau<1\}$ with $\varepsilon:=1-\tau$ being a sufficiently small constant. We first prove that $J_m(k_{m,s}r)$ is monotonously increasing with respect to $r\in (0, 1)$. {By using \eqnref{eq:zeros_derivative} and \eqnref{eq:jms} one can show that
$$j'_{m,1}= m(1+\mathcal{O}(m^{-2/3}))$$}
as $m\rightarrow \infty$. Since we also have from \eqnref{eq:approxj01} that
$$k_{m,s}= m(1+ m^{\varsigma}),\quad \varsigma\in [\varsigma_1,\varsigma_2],$$
where $\varsigma_j={2(\gamma_j-1)/3}$, $j=1, 2$. One thus has
$$k_{m,s}r=mr(1+m^{\varsigma})< j'_{m,1},$$
for any fixed $r<1$ and sufficiently large $m$.
One thus has $J_m'(k_{m,s}r)>0$ for $0<r<1$, since $j'_{m,1}$ is the first maximum of the Bessel  function $J_m(k_{m,s}r)$. Furthermore, we note that there admits the following asymptotic formula (see \cite{Kor02}, p. 129):
\begin{equation*}
J_m(k_{m,s}r)=\frac{z^m e^{m\sqrt{1-z^2}}}{(2\pi m)^{1/2}(1-z^2)^{1/4}(1+\sqrt{1-z^2})^m}\Big(1+o(1)\Big), \quad z=\frac{k_{m,s}r}{m}, \quad 0<r<1.
\end{equation*}
Thus one can derive the following asymptotic expansion:
\beq\label{eq:asympjn02}
\begin{split}
J_m(k_{m,s}|x|)<J_m(k_{m,s}\tau)=&Cm^{-1/2}\left(\frac{k_{m,s}\tau}{m}\frac{e^{\sqrt{1-\Big(\frac{k_{m,s}\tau}{m}\Big)^2}}}{1+\sqrt{1-\Big(\frac{k_{m,s}\tau}{m}\Big)^2}}\right)^m\Big(1+o(1)\Big)\\
=&Cm^{-1/2}\left((1-\varepsilon)\frac{e^{\sqrt{2\varepsilon(1+o(1))+Cm^{\varsigma}}}}{1+\sqrt{2\varepsilon(1+o(1))+Cm^{\varsigma}}}\right)^m\Big(1+o(1)\Big)\\
=&Cm^{-1/2}(1-\varepsilon)^m(1+o(1))^m\Big(1+o(1)\Big)
\end{split}
\eeq
for $x\in\Omega_\tau$, where $C$ is a positive constant. On the other hand, for $m$ sufficiently large, one can choose $r_1$ by
$$r_1=\frac{j_{m,1}'}{k_{m,s}},$$
where $\tau< r_1<1$. Since
$$
m<j_{m,1}'=k_{m,s}r_1,
$$
by using the asymptotic expansion \eqnref{eq:approxj02}, there holds
\beq\label{eq:asympjn03}
\begin{aligned}
&m^{1-\varsigma/2}\int_\tau^1 J_m^2(k_{m, s}r) \, r\mathrm{d}r\\
&\quad \geq m^{1-\varsigma/2}\int_{r_1}^{1}J_m^2(k_{m, s}r) \, r\mathrm{d}r\\
&\quad\geq\frac{2}{\pi}\frac{m^{1-\varsigma/2}}{\sqrt{k_{m,s}^2-m^2}}\int_{r_1}^{1}\cos^2(r\sqrt{k_{m,s}^2-\frac{m^2}{r^2}}-\frac{m\pi}{2}+m\arcsin\Big(\frac{m}{k_{m,s}r}\Big)-\frac{\pi}{4})dr\\
&\quad=\frac{2}{\pi}\frac{m^{1-\varsigma/2}}{\sqrt{k_{m,s}^2-m^2}}\frac{1-r_1}{2}\Big(1+\mathcal{R}(m)\Big)=\frac{1}{\sqrt{2}\pi}\Big(1+o(1)+\mathcal{R}(m)\Big)\geq\frac{1}{\sqrt{2}\pi}\Big(1+\mathcal{R}(m)\Big),
\end{aligned}
\eeq
where the remaining term $\mathcal{R}(m)$ fulfils the following estimate as $m\rightarrow\infty$,
\begin{equation*}
\begin{aligned}
\mathcal{R}(m)=&\frac{1}{1-r_1}\int_{r_1}^{1}\sin\left(2r\sqrt{k_{m,s}^2-m^2}\Big(1+\mathcal{O}(m^{-2/3-\varsigma}+m^{2/3\varsigma})\Big)\right)dr\\
=&\frac{1}{2(1-r_1)\sqrt{k_{m,s}^2-m^2}}\int_{r_1\sqrt{k_{m,s}^2-m^2}}^{\sqrt{k_{m,s}^2-m^2}}\sin\left(r'\Big(1+\mathcal{O}(m^{-2/3-\varsigma}+m^{2/3\varsigma})\Big)\right)dr'\rightarrow 0.
\end{aligned}
\end{equation*}
Note that
\beq
\lim_{m\rightarrow \infty} (1-\varepsilon)^m m^{-\frac{\varsigma}{2}}=0.
\eeq
Therefore, from \eqref{eq:asympjn02} and \eqref{eq:asympjn03}, it holds that
  \begin{equation*}
  \begin{aligned}
   \frac{\|v_{k_{m, s}}\|_{L^2(\Omega_{\tau} )}^2}{\|v_{k_{m, s}}\|^2_{L^2(\Omega)}}
   =& \frac{\int_0^\tau J_m^2(k_{m, s}r) \, r\mathrm{d}r}{\int_0^1 J_m^2(k_{m, s}r) \, r\mathrm{d}r}\leq\frac{m^{1-\varsigma/2}J_m^2(k_{m, s}\tau)\int_0^\tau \, r\mathrm{d}r}{m^{1-\varsigma/2}\int_{r_1}^1 J_m^2(k_{m, s}r) \, r\mathrm{d}r}\rightarrow 0\ \ \mbox{as}\ \ m\rightarrow\infty.
   \end{aligned}
  \end{equation*}
Hence, the transmission eigenfunction $v_{k_{m,s}}$  is surface-localized on the boundary $\partial \Omega$.

The proof is complete.
\end{proof}

\begin{thm}\label{thm:2.2n}
Under the same assumptions in Theorem \ref{thm:2.1} and suppose that $\{w_{k_{m, s}}\}$ are the corresponding eigenfunctions with respect to $w$ in \eqref{eq:trans1}. Then $\{w_{k_{m, s}}\}$ are not surface-localize around $\partial \Omega$.
\end{thm}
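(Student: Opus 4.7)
The plan is to use the explicit radial form of $w_{k_{m,s}}$ together with the oscillatory Bessel asymptotic \eqref{eq:approxj02} invoked in the proof of Theorem \ref{thm:2.1}, but now to exploit that the Bessel argument is scaled by $\mathbf{n}>1$, so that its turning point lies at an $O(1)$ distance inside $\Omega$ rather than near $\partial\Omega$.

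Restricting to two dimensions (the three-dimensional case reduces to this via $j_m(|x|) = \sqrt{\pi/(2|x|)}\,J_{m+1/2}(|x|)$), I would write $w_{k_{m,s}}(x) = \alpha_m J_m(k_{m,s}\mathbf{n}|x|)e^{\mathrm{i}m\theta}$ and reduce the localization ratio to
$$\mathcal{R}_m(\epsilon_0) := \frac{\int_{1-\epsilon_0}^1 J_m^2(k_{m,s}\mathbf{n} r)\,r\,dr}{\int_0^1 J_m^2(k_{m,s}\mathbf{n} r)\,r\,dr}.$$
Since $k_{m,s}/m\to 1$ by Lemma \ref{lem:zeros} and $\mathbf{n}>1$, for any fixed $\delta\in(0,1-1/\mathbf{n})$ and all sufficiently large $m$, the inequality $k_{m,s}\mathbf{n} r>m$ holds throughout $r\in(1/\mathbf{n}+\delta,1)$, so $J_m$ is in its oscillatory regime on an $r$-interval of length \emph{independent} of $m$. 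Applying \eqref{eq:approxj02} uniformly there, and writing $\cos^2 = (1+\cos(2\Phi_m(r)))/2$ where $\Phi_m$ denotes the phase in \eqref{eq:approxj02}, the $\cos(2\Phi_m)$ contribution is controlled by integration by parts (since $\Phi_m'(r)\sim m$ is bounded below on the bulk), yielding the matched estimates
$$\int_{1/\mathbf{n}+\delta}^{1-\epsilon_0} J_m^2(k_{m,s}\mathbf{n} r)\,r\,dr \;\geq\; \frac{c_1(\mathbf{n},\delta)}{m}, \qquad \int_{1-\epsilon_0}^1 J_m^2(k_{m,s}\mathbf{n} r)\,r\,dr \;\leq\; \frac{c_2(\mathbf{n})\,\epsilon_0}{m}.$$
The remaining region $r\in(0,1/\mathbf{n}-\delta)$ contributes at most an exponentially small amount by the pre-turning-point estimate (the same bound underlying \eqref{eq:asympjn02} in Theorem \ref{thm:2.1}), and is negligible.

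Combining these gives $\mathcal{R}_m(\epsilon_0) \leq C(\mathbf{n})\epsilon_0$ uniformly in large $m$, which stays strictly below $1$ for small $\epsilon_0$; hence $w_{k_{m,s}}$ fails the surface-localization criterion. The main obstacle is extracting a genuine positive constant in the lower bound on the bulk integral: both the bulk and surface integrals scale like $1/m$, so one cannot afford much slack in the oscillatory remainder. However, this is essentially the same stationary-phase type estimate as the one producing $\mathcal{R}(m)=o(1)$ in the proof of Theorem \ref{thm:2.1}, now applied on an interval of fixed length rather than a shrinking one, and is therefore somewhat easier. Conceptually, the theorem reflects that the turning point $r=1/\mathbf{n}$ of $J_m(k_{m,s}\mathbf{n}\,\cdot\,)$ lies strictly inside $\Omega$, so the mass of $w_{k_{m,s}}$ spreads over the whole annulus $\{1/\mathbf{n}<|x|<1\}$ instead of concentrating on $\partial\Omega$.
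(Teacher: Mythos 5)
Your proposal is correct and follows essentially the same route as the paper's proof: both rest on the explicit radial form $w_{k_{m,s}}=\alpha_m J_m(k_{m,s}\mathbf{n}|x|)e^{\mathrm{i}m\theta}$, the fact that the turning point $r\approx j_{m,1}'/(k_{m,s}\mathbf{n})\approx 1/\mathbf{n}$ lies strictly inside $\Omega$, and the oscillatory asymptotic \eqref{eq:approxj02}, which makes the $L^2$ mass spread over the whole annulus $\{1/\mathbf{n}<|x|<1\}$ at the common scale $1/m$. The only difference is bookkeeping: the paper lower-bounds the interior mass fraction via $Q(\tau)/(2Q(1))\sim \sqrt{\mathbf{n}^2\tau^2-1}\,/\,(2\sqrt{\mathbf{n}^2-1})$, whereas you upper-bound the boundary-band fraction by $C(\mathbf{n})\epsilon_0$, with the oscillatory remainder controlled by integration by parts exactly as in the paper's $\mathcal{R}(m)$ estimate.
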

\begin{proof}
Since $w_{k_{m,s}}=\beta_m J_m(k_{m,s}\mathbf{n}|x|)$, one can immediately obtain that for $|x|=\frac{1}{\mathbf{n}}\frac{j_{m,1}'}{k_{m,s}}$, $w_{k_{m,s}}$ attains its maximum value. Since $\frac{1}{\mathbf{n}}\frac{j_{m,1}'}{k_{m,s}}<\frac{1}{\mathbf{n}}<1$ holds uniformly, thus there exists $\tau$ which is independent of $m$, such that $\frac{1}{\mathbf{n}}<\tau<1$. Noting that $j_{m,1}'/k_{m,s}<1$ and by using Theorem \ref{thm:2.1}, one has
\beq
\begin{split}
\int_0^1 J_m^2(k_{m, s}\mathbf{n}r) \, r\mathrm{d}r=&\frac{1}{n^2}\int_0^{\mathbf{n}} J_m^2(k_{m, s}r') \, r'\mathrm{d}r'\\
=&\frac{1}{n^2}\int_0^{\frac{j_{m,1}'}{k_{m,s}}} J_m^2(k_{m, s}r') \, r'\mathrm{d}r'+\frac{1}{n^2}\int_{\frac{j_{m,1}'}{k_{m,s}}}^{\mathbf{n}} J_m^2(k_{m, s}r') \, r'\mathrm{d}r'\\
<&\frac{2}{n^2}\int_{\frac{j_{m,1}'}{k_{m,s}}}^{\mathbf{n}} J_m^2(k_{m, s}r') \, r'\mathrm{d}r'=2\int_{\frac{1}{\mathbf{n}}\frac{j_{m,1}'}{k_{m,s}}}^1 J_m^2(k_{m, s}r) \, r\mathrm{d}r.
\end{split}
\eeq
By following a similar arguments as in Theorem \ref{thm:2.1}, one has for $\tau\leq t\leq 1$
\beq\label{eq:noloc01}
\begin{split}
Q(t):=&\int_{\frac{1}{\mathbf{n}}\frac{j_{m,1}'}{k_{m,s}}}^{t} \, J_m^2(k_{m, s}\mathbf{n}r) r\mathrm{d}r\\
=&\frac{2}{\pi}\int_{\frac{1}{\mathbf{n}}\frac{j_{m,1}'}{k_{m,s}}}^{t} \frac{\cos^2(r\sqrt{k_{m,s}^2\mathbf{n}^2-m^2/r^2}-\frac{m\pi}{2}+m\arcsin\Big(\frac{m}{k_{m,s}\mathbf{n}r}\Big)-\frac{\pi}{4})}{\sqrt{k_{m, s}^2\mathbf{n}^2-m^2/r^2}} \, \mathrm{d}r\\
\sim & \frac{1}{\pi k_{m,s}^2}\left(\sqrt{k_{m, s}^2\mathbf{n}^2 t^2-m^2}-\sqrt{j_{m, 1}'^2-m^2}\right).
\end{split}
\eeq
Thus one has
  \begin{equation*}
  \begin{aligned}
   &\displaystyle \frac{\|w_{k_{m, s}}\|_{L^2(\Omega_{\tau} )}^2}{\|w_{k_{m, s}}\|^2_{L^2(\Omega)}}   =\frac{\int_0^\tau J_m^2(k_{m, s}\mathbf{n}r) \, r\mathrm{d}r}{\int_0^1 J_m^2(k_{m, s}\mathbf{n}r) \, r\mathrm{d}r}\\
   \geq& \frac{\int_{\frac{1}{\mathbf{n}}\frac{j_{m,1}'}{k_{m,s}}}^{\tau} \, J_m^2(k_{m, s}\mathbf{n}r) r\mathrm{d}r}{2\int_{\frac{1}{\mathbf{n}}\frac{j_{m,1}'}{k_{m,s}}}^{1} J_m^2(k_{m, s}\mathbf{n}r) \, r\mathrm{d}r}= \frac{Q(\tau)}{2Q(1)}\\
    \sim & \frac{\sqrt{k_{m, s}^2\mathbf{n}^2 \tau^2-m^2}-\sqrt{j_{m, 1}'^2-m^2}}{2\Big(\sqrt{k_{m, s}^2\mathbf{n}^2 -m^2}-\sqrt{j_{m, 1}'^2-m^2}\Big)}\\
    \sim&\frac{\sqrt{\mathbf{n}^2 \tau^2-1}}{2\sqrt{\mathbf{n}^2-1}}>0,
   \end{aligned}
  \end{equation*}
which completes the proof.
\end{proof}

Theorems~\ref{thm:2.1} and \ref{thm:2.2n} indicate that we find a sequence of eigenfunction pairs $\{(w_{k_m, s}, v_{k_m, s})\}$ such that $\{v_{k_m, s}\}$ are SLEs, whereas $\{w_{k_m, s}\}$ are not SLEs. However, we would like to point out that we did not exclude the possibility of finding a sequence of eigenfunction pairs with both eigenfunctions being SLEs. Nevertheless, we can easily have the following result. 



\begin{thm}\label{thm:2.2}
Let  $\Omega=\{x\in \mathbb{R}^d: |x|<r_0\in\mathbb{R}_+\},\, d=2,3,$ and $0<\mathbf{n}<1$ be a constant. Consider the transmission eigenvalue problem \eqref{eq:trans1}. Then there exists a subsequence $\{k_{m,s}\}$ of  the transmission eigenvalues $\{k_{\ell}\}$ such that $\infty$ is the only accumulation point of the sequence  $\{k_{m,s}\}$ and the corresponding eigenfunctions $w_{k_{m,s}}$ are surface-localized around the boundary $\partial \Omega$.
\end{thm}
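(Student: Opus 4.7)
The plan is to exploit a natural symmetry of the transmission system \eqref{eq:trans1} that interchanges $\mathbf{n}\leftrightarrow 1/\mathbf{n}$ and $w\leftrightarrow v$, thereby reducing the present case $0<\mathbf{n}<1$ to the case $\mathbf{n}>1$ already handled in Theorem \ref{thm:2.1}. Concretely, if $(w,v)$ solves \eqref{eq:trans1} at wavenumber $k$ with constant index $\mathbf{n}$, I would set
\[
\widetilde{w}:=v,\qquad \widetilde{v}:=w,\qquad \widetilde{k}:=k\mathbf{n},\qquad \widetilde{\mathbf{n}}:=1/\mathbf{n},
\]
and check directly that $(\widetilde{w},\widetilde{v})$ solves \eqref{eq:trans1} at wavenumber $\widetilde{k}$ with constant index $\widetilde{\mathbf{n}}$. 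Indeed, $\Delta\widetilde{w}+\widetilde{k}^2\widetilde{\mathbf{n}}^2\widetilde{w}=\Delta v+k^2 v=0$ and $\Delta\widetilde{v}+\widetilde{k}^2\widetilde{v}=\Delta w+k^2\mathbf{n}^2 w=0$, while the transmission boundary conditions $w=v$, $\partial_\nu w=\partial_\nu v$ are invariant under swapping the two components.

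I would then apply Theorem \ref{thm:2.1} to the dual problem, whose refractive index $\widetilde{\mathbf{n}}=1/\mathbf{n}>1$ is admissible. This yields a subsequence $\{\widetilde{k}_{m,s}\}$ of dual transmission eigenvalues with $\widetilde{k}_{m,s}\to\infty$ whose associated $\widetilde{v}$-eigenfunctions are surface-localized around $\partial\Omega$ in the sense of the definition above. Unwinding the duality, $k_{m,s}:=\widetilde{k}_{m,s}/\mathbf{n}\to\infty$ are transmission eigenvalues of the original problem, and the corresponding $w$-eigenfunctions satisfy
\[
w_{k_{m,s}}=\alpha_m J_m(k_{m,s}\mathbf{n}|x|)=\alpha_m J_m(\widetilde{k}_{m,s}|x|),
\]
so they coincide (up to a harmless scalar) with the surface-localized $\widetilde{v}$-eigenfunctions produced by Theorem \ref{thm:2.1}. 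Since the domain $\Omega$ and the tubular neighbourhood $\mathcal{N}_{\epsilon_0}(\partial\Omega)$ are unchanged by this identification, the surface-localization property transfers verbatim.

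I do not anticipate any substantial obstacle. Once the duality is observed, the theorem is a direct corollary of Theorem \ref{thm:2.1}, with no further Bessel-function asymptotics required. The only minor point to keep in mind is that the subsequence of eigenvalues obtained in this way is the $1/\mathbf{n}$-rescaling of the one produced in Lemma \ref{lem:zeros}, but this affects neither the divergence $k_{m,s}\to\infty$ nor the surface-localization conclusion. If one preferred to avoid invoking the duality, an entirely parallel argument based directly on the characteristic equation $J_{m-1}(k)J_m(k\mathbf{n})-\mathbf{n} J_m(k)J_{m-1}(k\mathbf{n})=0$, tracking $J_m(k\mathbf{n})$ in place of $J_m(k)$ throughout the proofs of Lemma \ref{lem:zeros} and Theorem \ref{thm:2.1}, would yield the same conclusion; this essentially amounts to the same calculation carried out in disguise.
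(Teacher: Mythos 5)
Your proposal is correct and is essentially the paper's own argument: the paper likewise rescales the wavenumber so that the roles of $w$ and $v$ are interchanged with the new index $\mathbf{n}^{-1}>1$, and then invokes Theorem \ref{thm:2.1}. Your explicit substitution $\widetilde{k}=k\mathbf{n}$, $\widetilde{\mathbf{n}}=1/\mathbf{n}$ with the swap $(\widetilde{w},\widetilde{v})=(v,w)$ is exactly this duality, written out slightly more carefully than the paper's one-line verification.
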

\begin{proof}
Set $\widetilde{k}=k/\mathbf{n}$. It is directly verified that the first two equations in \eqref{eq:trans1} become  $(\Delta+\widetilde{k}^2) w=0$ and $(\Delta+\widetilde{k}^2 \mathbf{n}^{-2})v=0$, while the transmission conditions on $\partial\Omega$ remain unchanged. Noting that $\mathbf{n}^{-1}>1$, all of the previous results in Theorem \ref{thm:2.1} hold  with $v$ replaced by $w$. Thus, one has that $w_{k_{\ell_m}}$ is surface-localize on the boundary $\partial \Omega$.
\end{proof}


\subsection{General case}\label{subsect:gene}

In this part, we consider the case where $\Omega$ is not necessarily of radial geometry and the refractive index $\mathbf{n}$ might be variable (but real). By extensive numerics, we shall verify that the spectral properties in Theorems~\ref{thm:2.1} and \ref{thm:2.2} still hold in the general case. Moreover, through our numerical experiments, we can find more quantitative properties of the SLEs. In principle, we shall see that the SLEs are topologically robust against large deformation or even twisting of the material interface $\partial\Omega$, and moreover the behaviours of the SLEs are mainly related to the refractive index $\mathbf{n}$ in a neighbourhood of $\partial\Omega$. It is also observed that a pair of transmission eigenfunctions $w$ and $v$ cannot be SLEs simultaneously. Furthermore, the occurrence of SLEs can be more often if $\mathbf{n}$ is sufficiently large locally around $\partial\Omega$ (for $v$) or sufficiently small locally around $\partial\Omega$ (for $w$).

To begin with, we note that if $(w, v)$ is a pair of eigenfunctions to \eqref{eq:trans1}, so is $\alpha\cdot(w, v)$ for any $\alpha\in\mathbb{C}\backslash\{0\}$. Hence, throughout the rest of this section, we shall normalize $v$ (or $w$ in some occasions). Moreover, we note the following scaling property of $k$ in \eqref{eq:trans1} with respect to the size of $\Omega$: for $\rho\in\mathbb{R}_+$, $\rho\cdot k$ is an eigenvalue to \eqref{eq:trans1} associated with $\Omega_\rho:=\frac 1 \rho\Omega$. Hence, we always assume that $\mathrm{diam}(\Omega)\sim 1$ in order to calibrate our study. Next, we present some typical numerical results to verify and demonstrate the SLEs in different scenarios.
   We mainly discuss the case $\mathbf{n}>1$ and briefly remark the case $0<\mathbf{n}<1$.

%

\subsection{SLEs for high-contrast mediums}\label{sect:2.3}

First, we consider the scenario that $\mathbf{n}$ is sufficiently large, which corresponds to the case that a high-contrast medium is located inside $\Omega$ (the medium outside $\Omega$ possesses $\mathbf{n}\equiv 1$). In Fig.~\ref{fig:circle}, we calculate a transmission eigenvalue $k=1.0080$ for $\Omega$ being a unit disk and plot the corresponding eigenfunctions $w$ and $v$. It is clearly seen that $v$ is an SLE. However, it is pointed out that the eigenfunction $w$ is not a SLE. That is, $w$ and $v$ are not SLEs simultaneously. It is emphasized that this is only a numerical observation and there might exist a pair of transmission eigenfunctions which are SLEs simultaneously, which deserves further investigation.  Moreover, in Fig.~\ref{fig:circle}, we note that $\mathrm{diam}(\Omega)$, being $2$, is much smaller than the underlying wavelength, being $2\pi/k\approx 2\pi$. Such an observation is critical for our subsequent development of the super-resolution imaging scheme. Fig.~\ref{fig:circle}, (c) presents the SLE of a triangle and in particular, in (d), (e), and (f) we note significant localization phenomena at the concave part of $\partial\Omega$, which is also a critical ingredient for our subsequent development of the super-resolution wave imaging.
\begin{figure}[h]
\centering
\subfigure[$w$]{\includegraphics[width=0.32\textwidth]{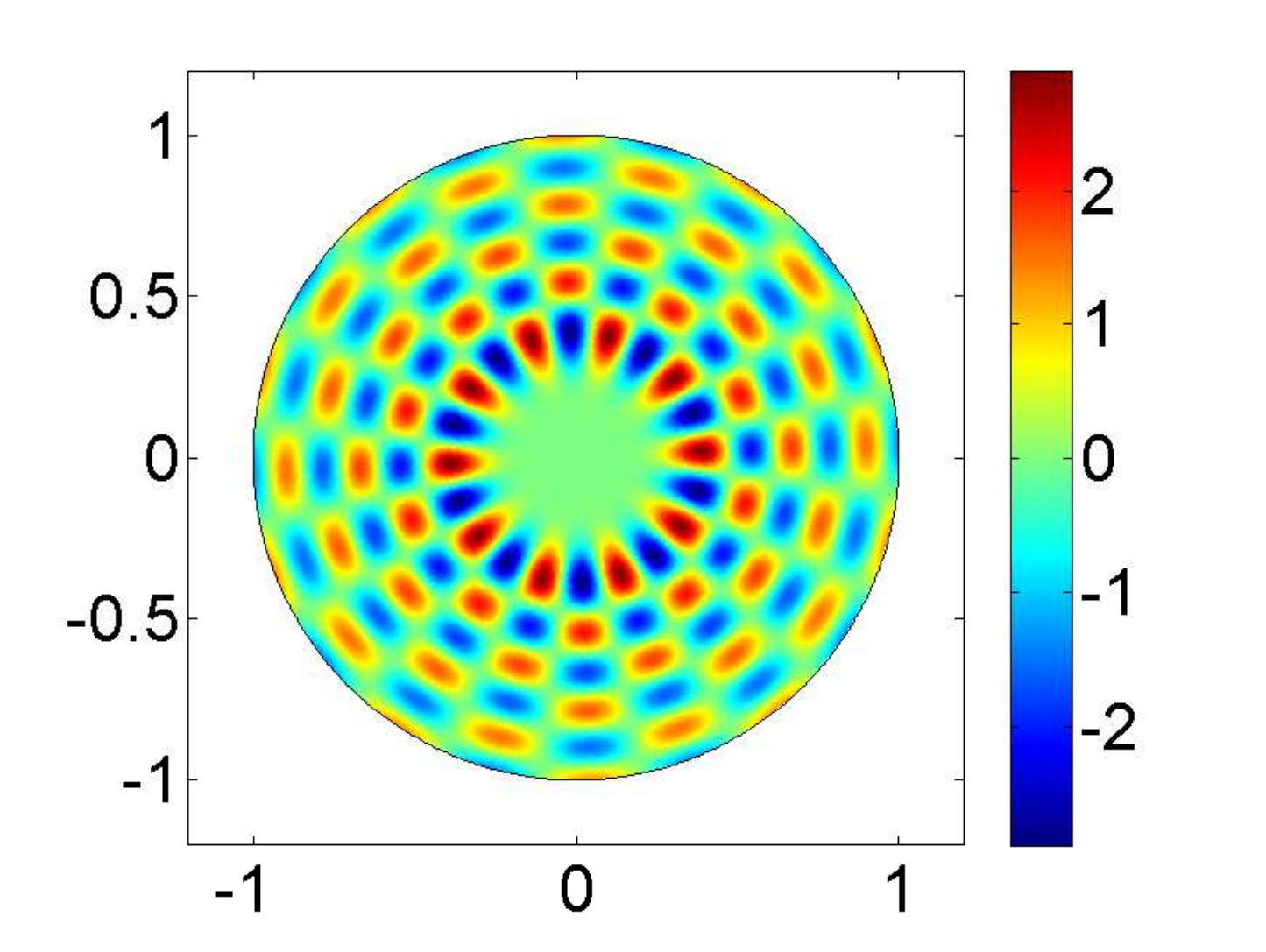}}
\subfigure[$v$]{\includegraphics[width=0.32\textwidth]{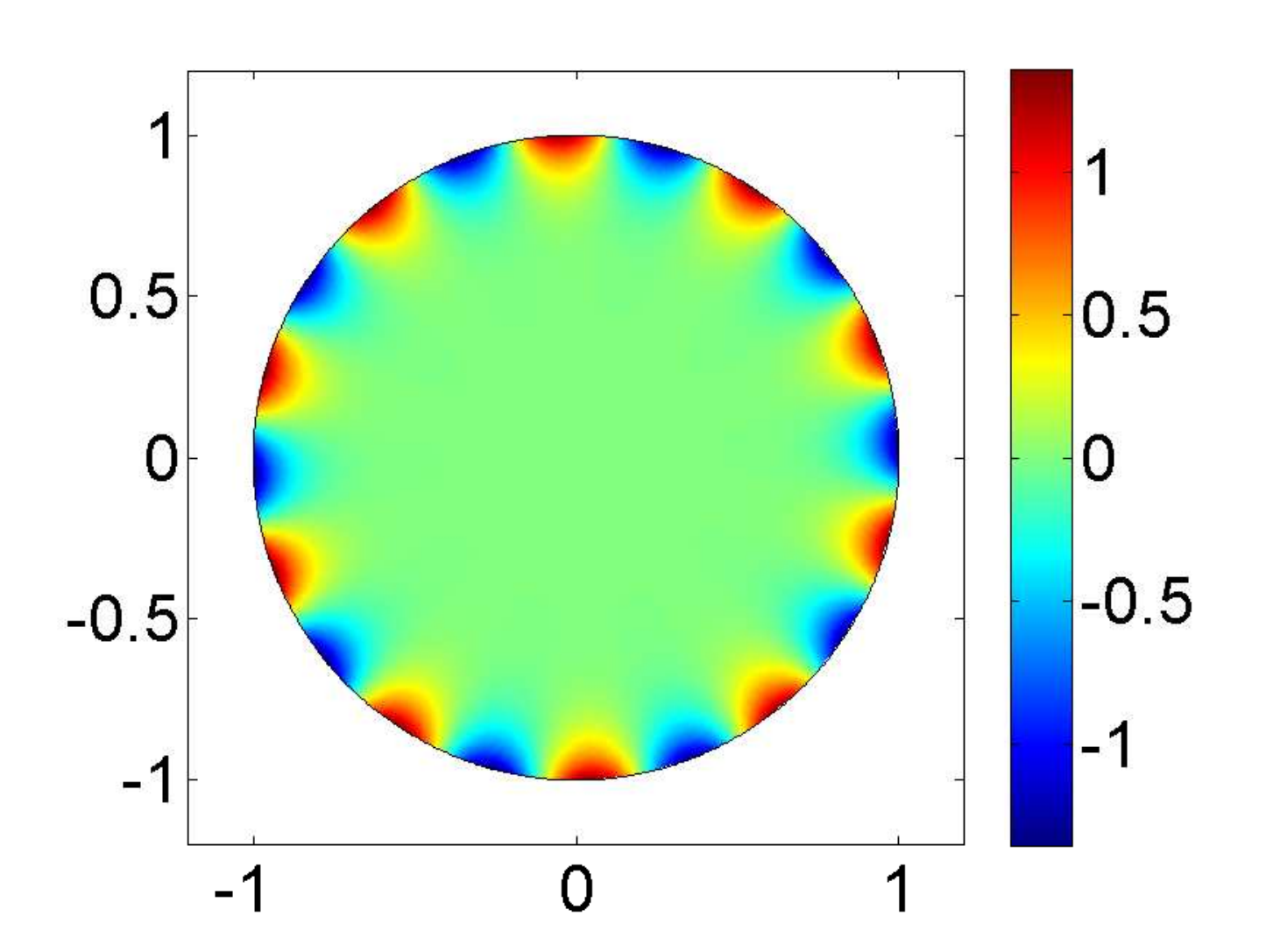}}
\subfigure[$k=1.1932$]{\includegraphics[width=0.32\textwidth]{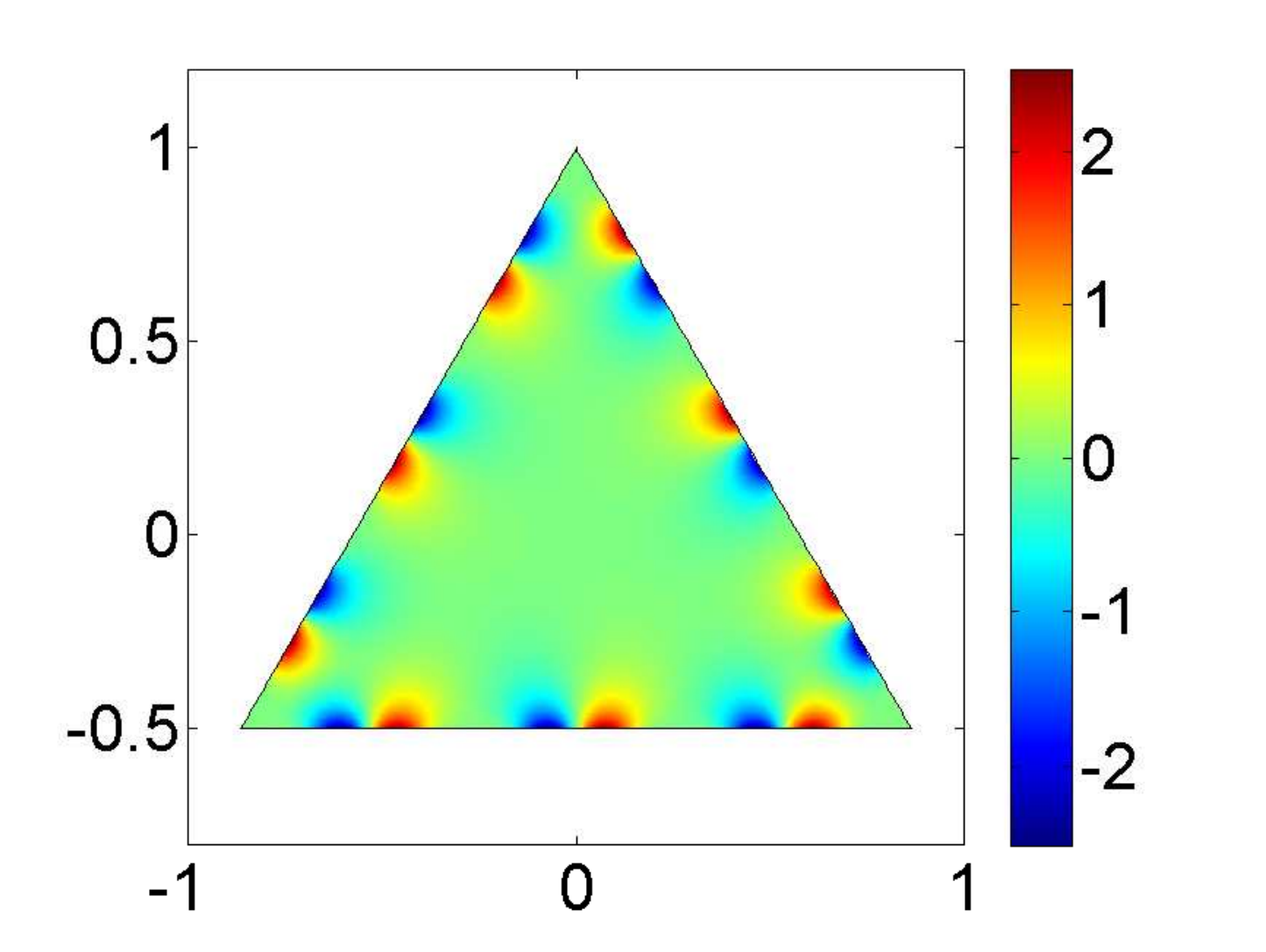}}\\
\subfigure[$k=1.0007$]{\includegraphics[width=0.32\textwidth]{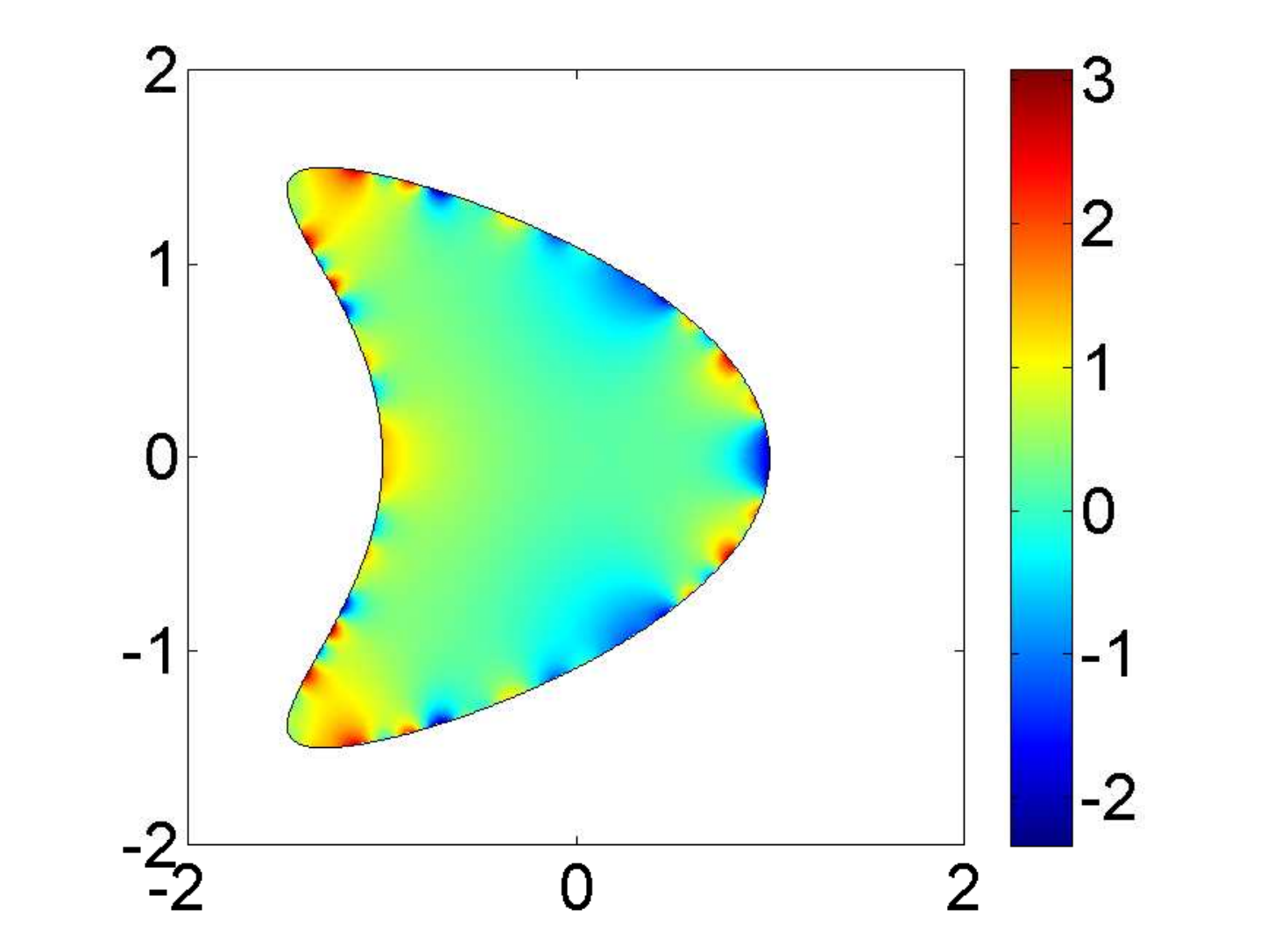}}
\subfigure[$k=1.1370$]{\includegraphics[width=0.32\textwidth]{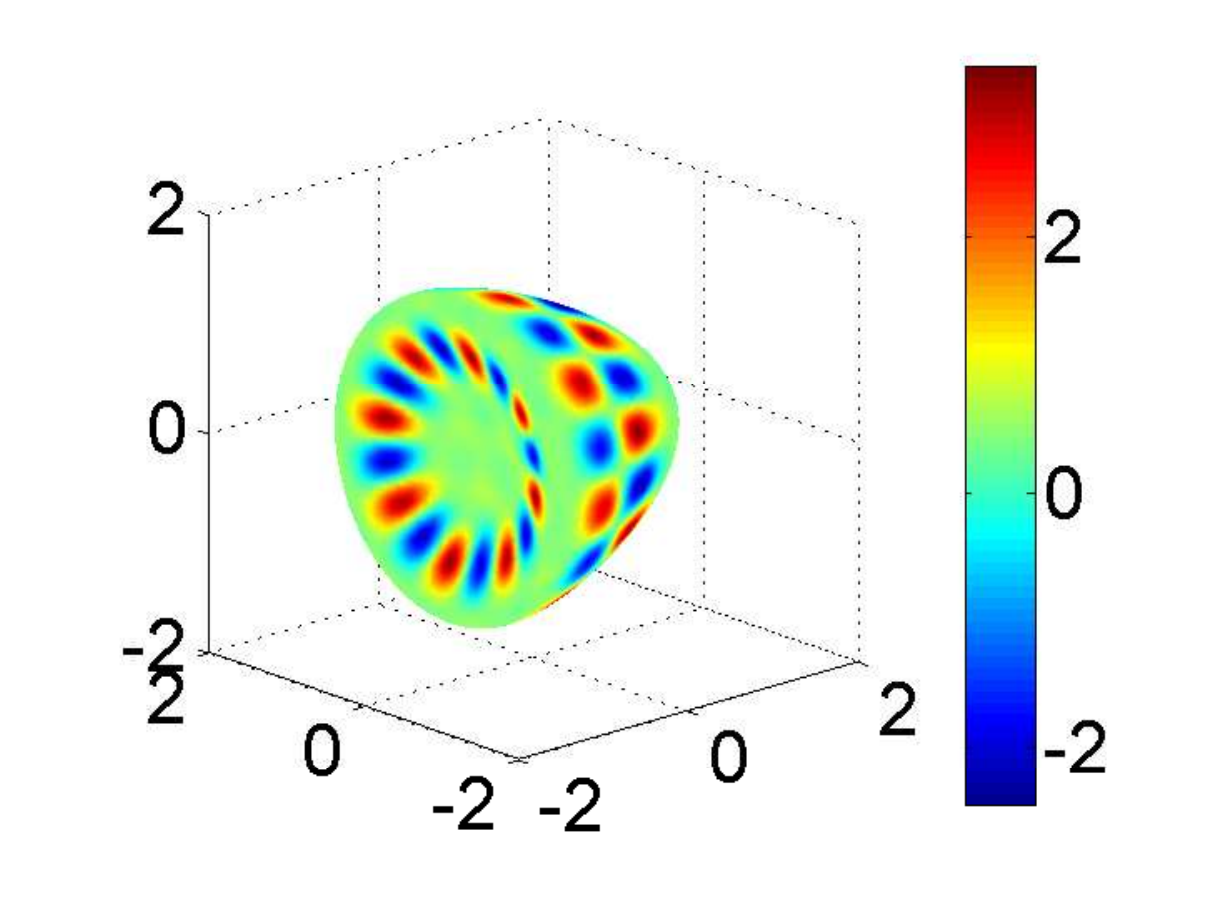}}
\subfigure[$k=1.1370$]{\includegraphics[width=0.33\textwidth]{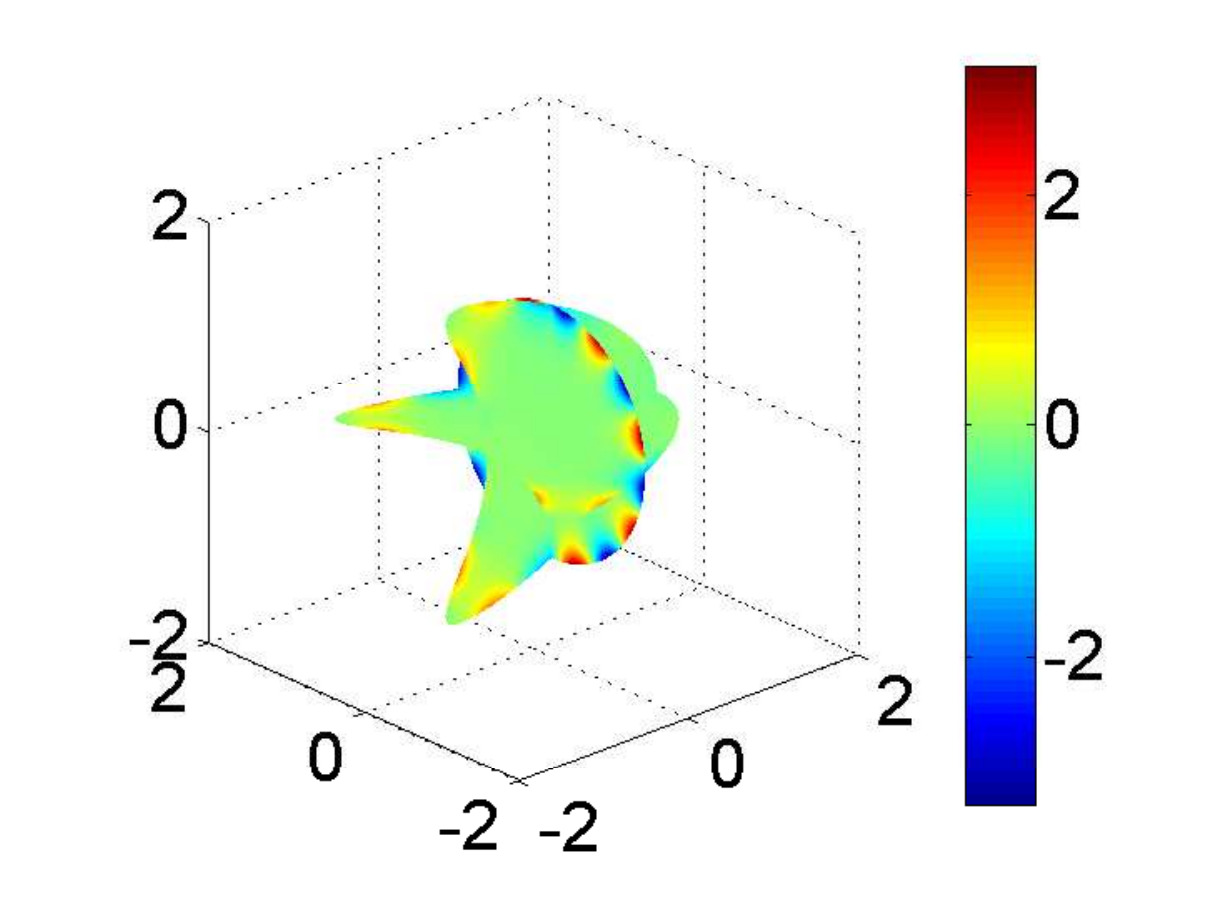}}\\
\caption{\label{fig:circle} (a)\&(b): eigenfunctions $w$ and $v$ to \eqref{eq:trans1} associated with $\mathbf{n}=30$, where $k=1.0080$; (c)\&(d): eigenfunctions $v$'s to \eqref{eq:trans1} of different shapes with $\mathbf{n}=30$; (e)\&(f): eigenfunction $v$ to \eqref{eq:trans1} with $\mathbf{n}=10$. }
\end{figure}


\subsection{SLEs for high-wavenumber modes}

Next, we consider the case that $\mathbf{n}$ is relatively small, namely $\mathbf{n}\sim 1$.
Fig.~\ref{fig:highenergy} presents several examples in both 2D and 3D.

\begin{figure}[!htpb]
\centering
\subfigure[$k=7.1925$]{\includegraphics[width=0.4\textwidth]{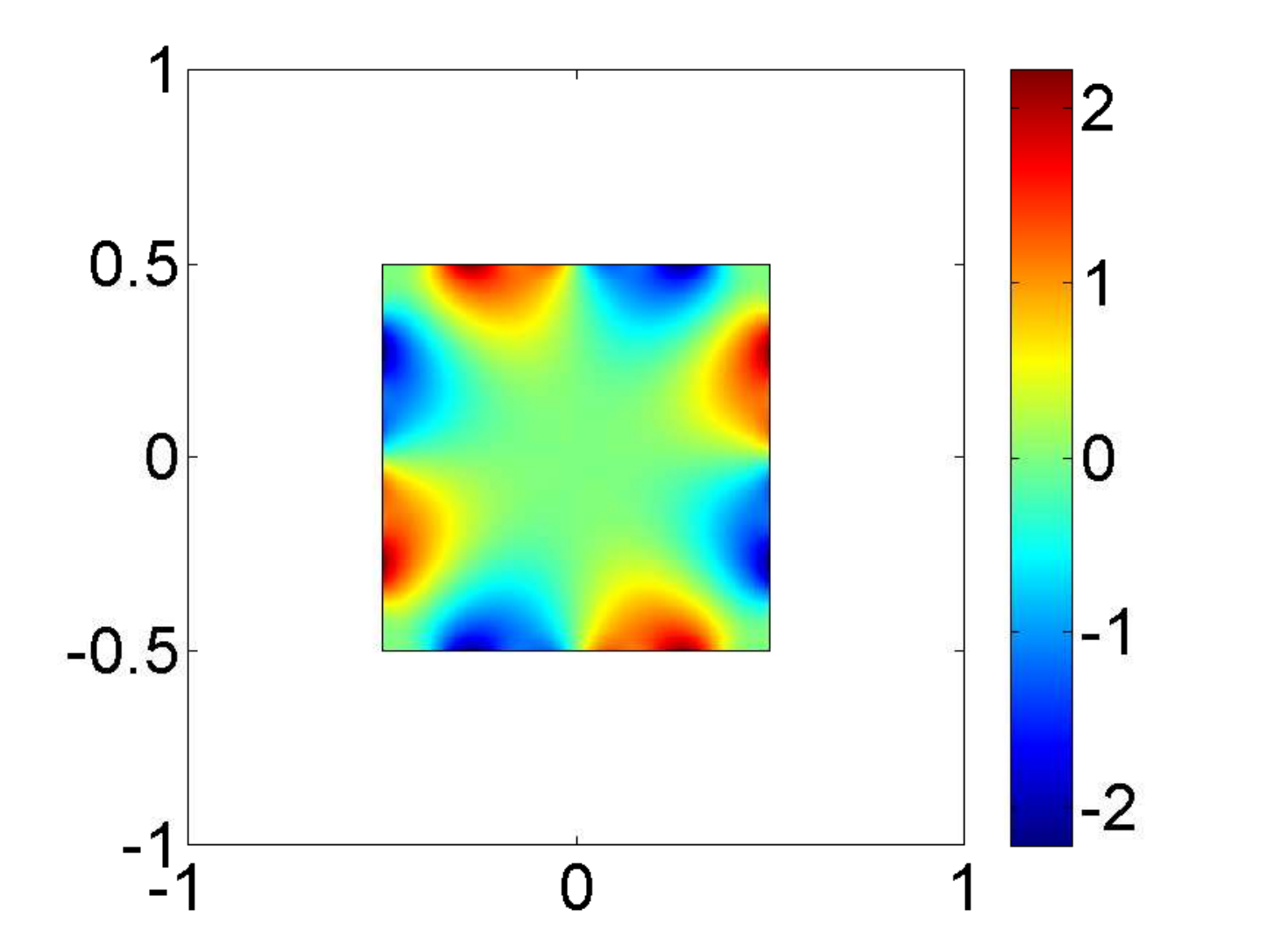}}
\subfigure[$k=7.9604$]{\includegraphics[width=0.4\textwidth]{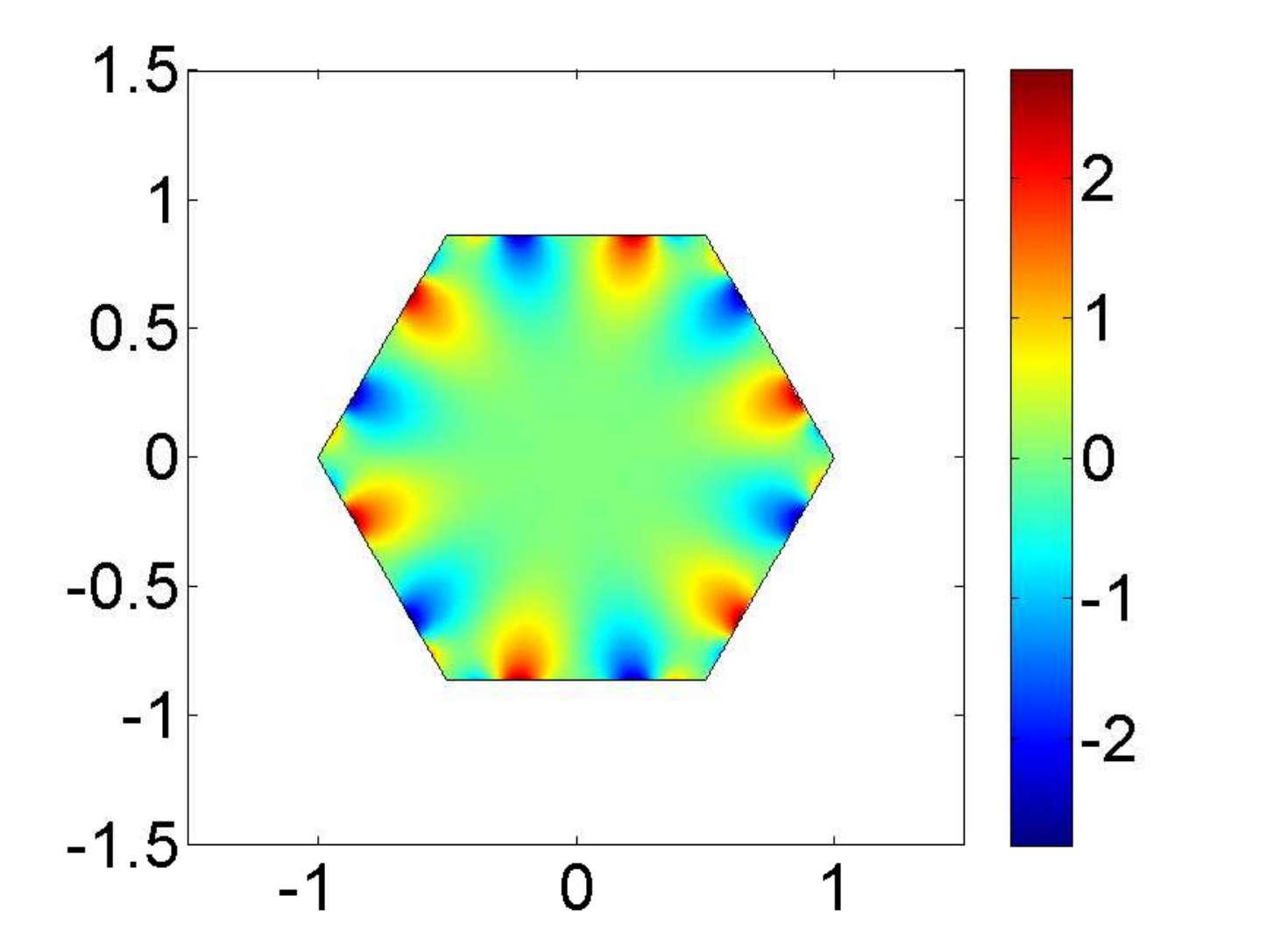}}\\
\subfigure[$k=7.1023$]{\includegraphics[width=0.4\textwidth]{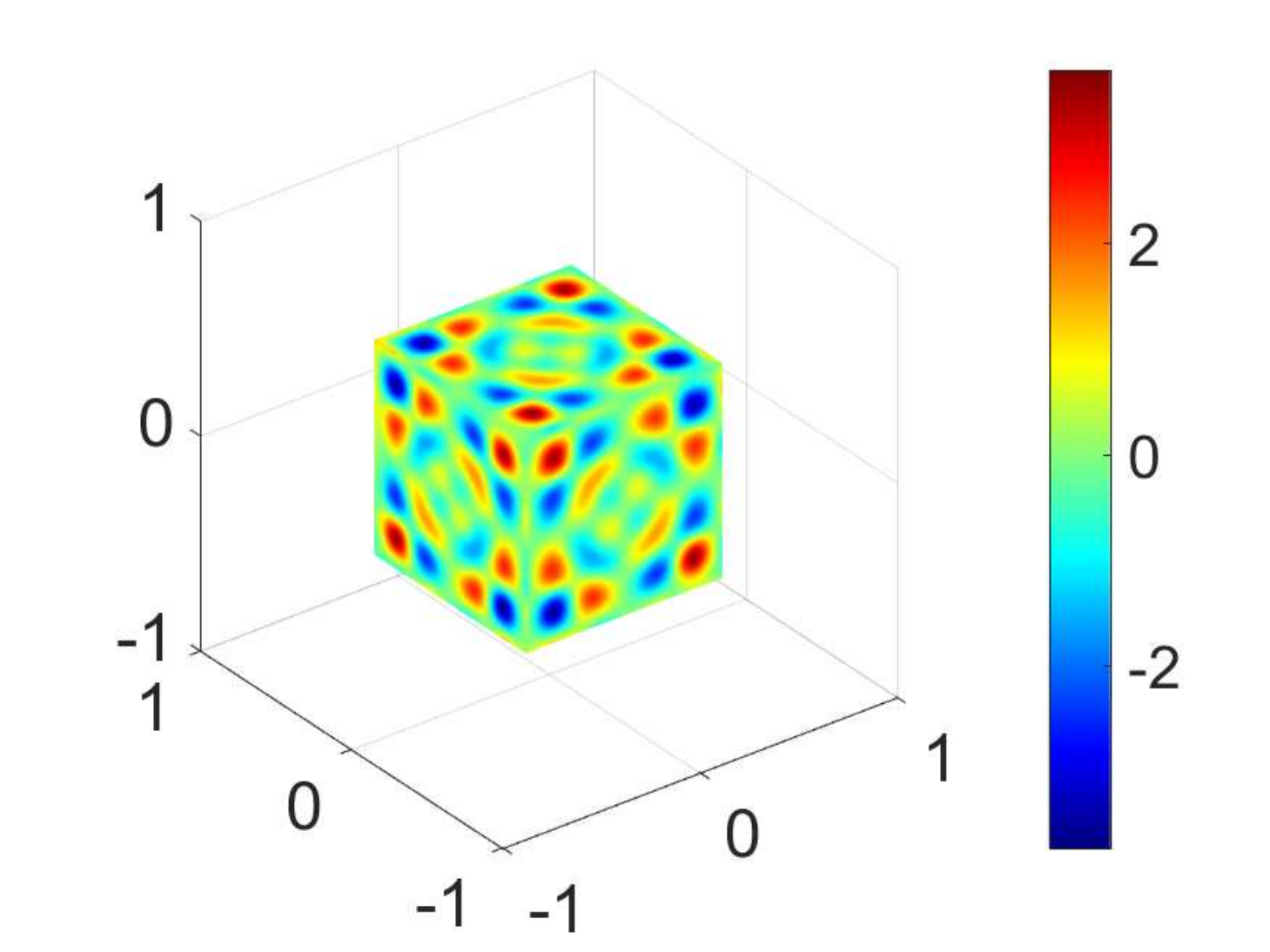}}
\subfigure[$k=7.0818$]{\includegraphics[width=0.4\textwidth]{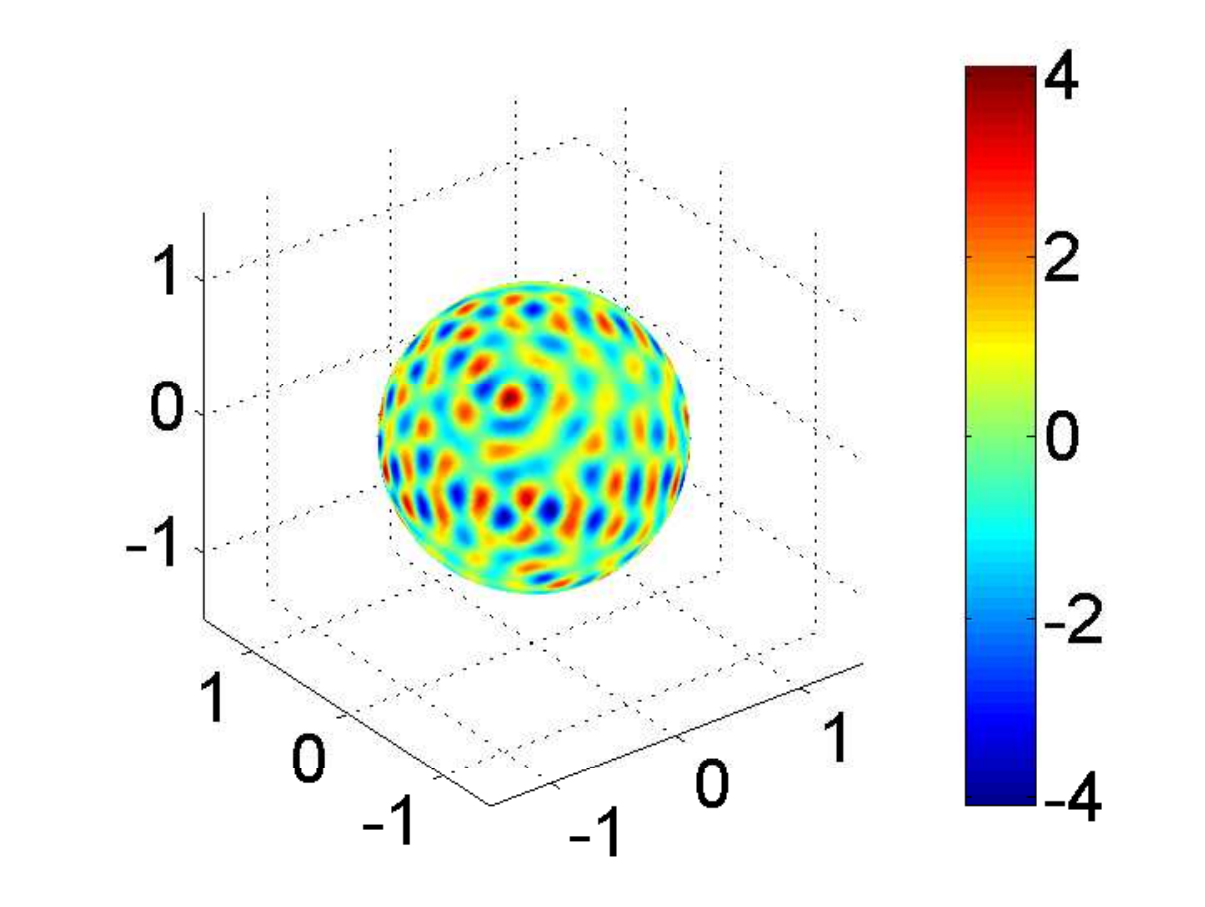}}\\ 
\caption{\label{fig:highenergy} Transmission eigenfunctions $v$'s to \eqref{eq:trans1} associated with $\mathbf{n}=4$, for different $\Omega$'s and $k$'s. }
\end{figure}

\subsection{Topological robustness of the existence of SLEs}

The existence of the SLEs is topologically very robust against large deformation or even twisting of the material interface $\partial \Omega$; see Fig. \ref{fig:robust}.

\begin{figure}[h]
\centering
\subfigure[$k=1.2802$]{\includegraphics[width=0.4\textwidth]{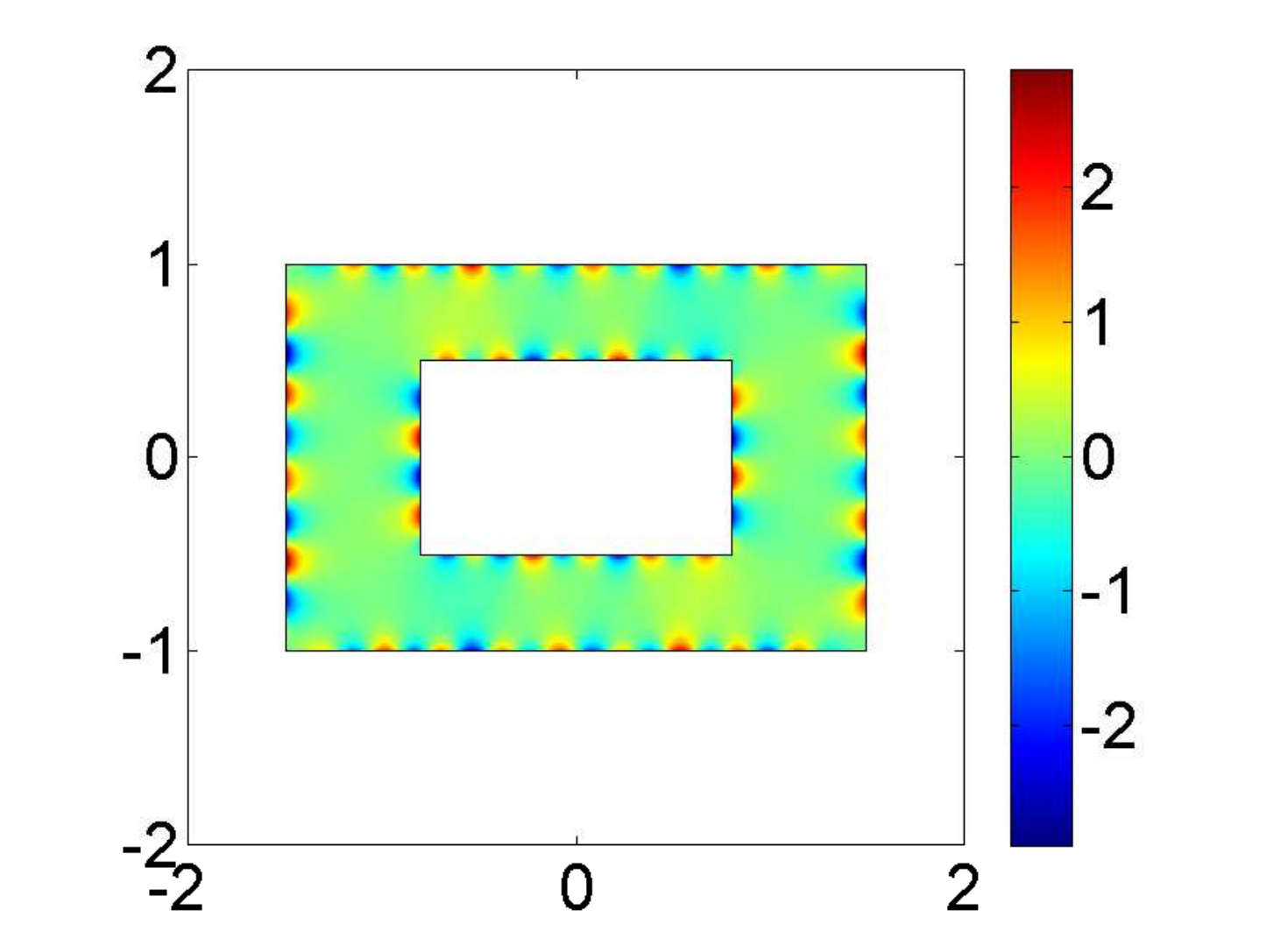}}
\subfigure[$k=1.9604$]{\includegraphics[width=0.4\textwidth]{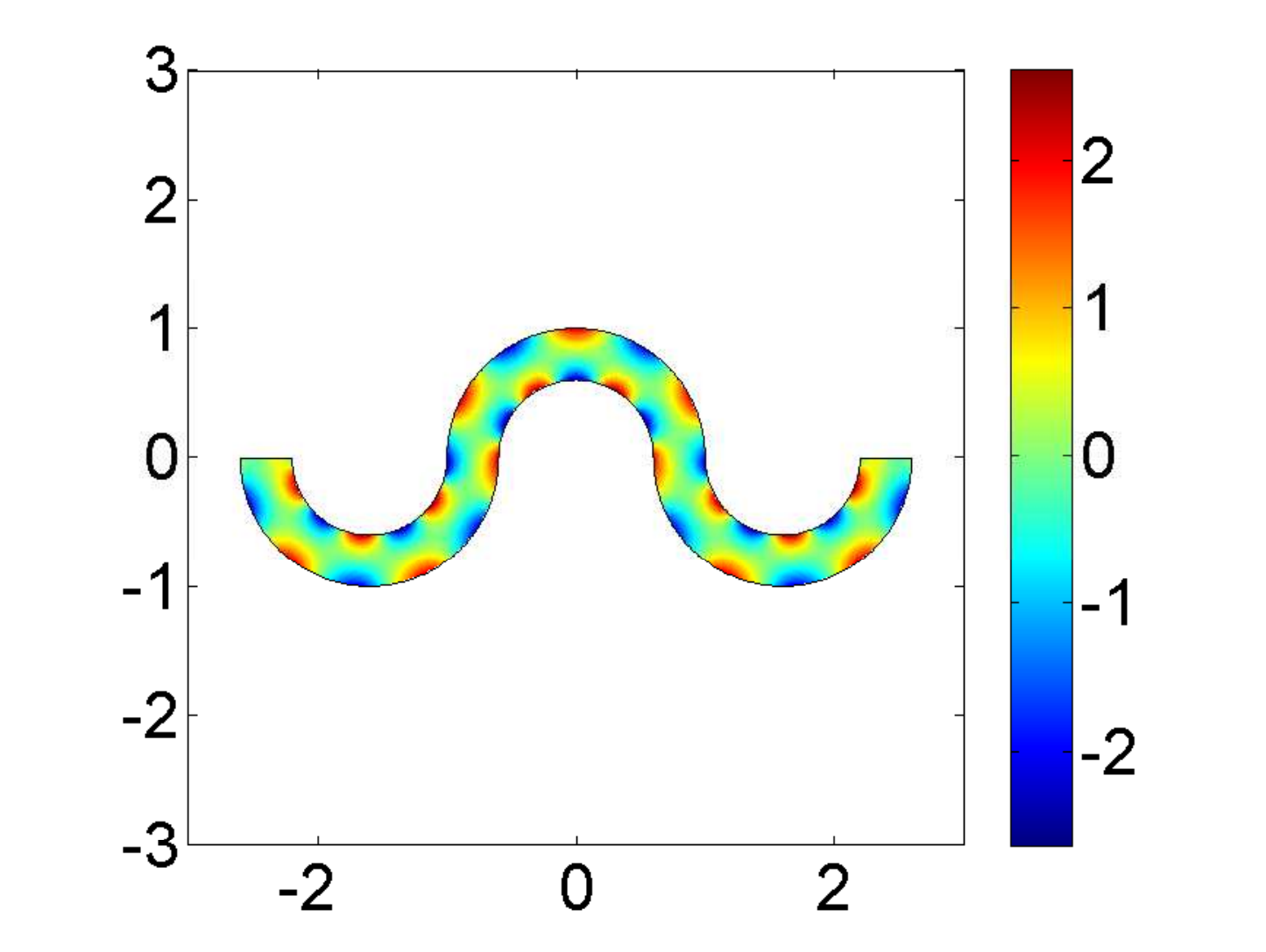}}\\ 
\subfigure[$k=1.0007$]{\includegraphics[width=0.4\textwidth]{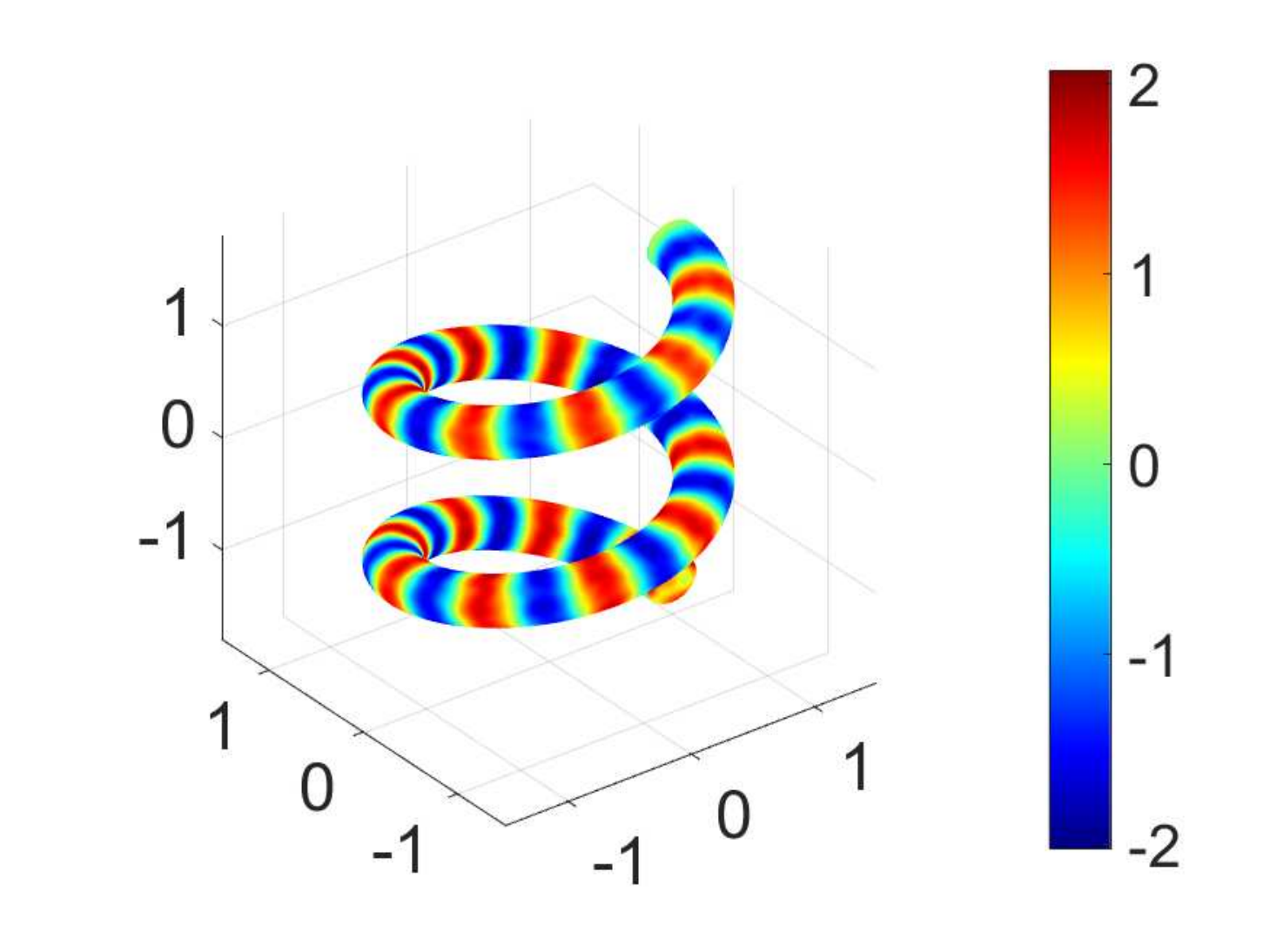}}
\subfigure[$k=1.0133$]{\includegraphics[width=0.4\textwidth]{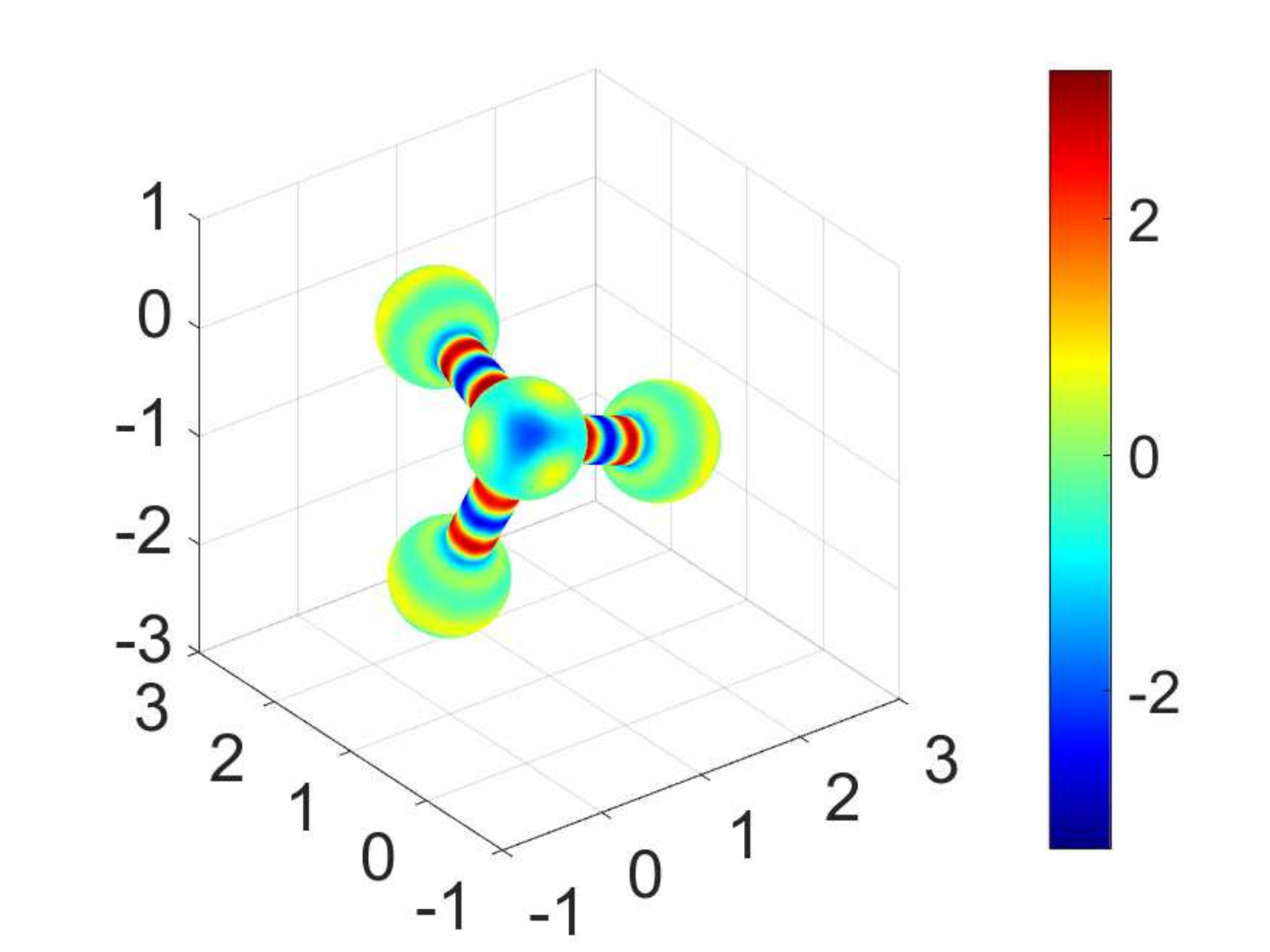}}\\ 
\caption{\label{fig:robust} The existence of SLEs is topologically robust. Here, $\mathbf{n}=20$ for all cases.
}
\end{figure}

\subsection{SLEs for variable refractive inhomogeneities and coated objects}\label{sect:2.6}
As remarked earlier, the SLEs also exist for variable refractive inhomogeneities. In Fig.~\ref{fig:medium_layer}~(a), the eigenfunction $v$ is associated with $\mathbf{n}=30$ in the outside thin layer and $\mathbf{n}=4$ in the inside triangle. It is emphasized that the outside layer is not required to be very thin in order to exhibit the SLEs.
Indeed, as long as $\mathbf{n}$ is sufficiently large locally around $\partial\Omega$, the SLEs can be found even for relatively small eigenvalues (cf. Fig.~\ref{fig:circle}). This specific example shall be used again in our subsequent study. Fig.~\ref{fig:medium_layer}~(b), corresponds to a coated object, where the inside kite-domain is a sound-soft obstacle. That is, in \eqref{eq:trans1}, $w$ does not exist in the inside kite-domain and we impose a zero Dirichlet condition of $w$ on the boundary of the inside kite-domain.

\begin{figure}[h]
\subfigure[$k=2.4454$]
{\includegraphics[width=0.32\textwidth]{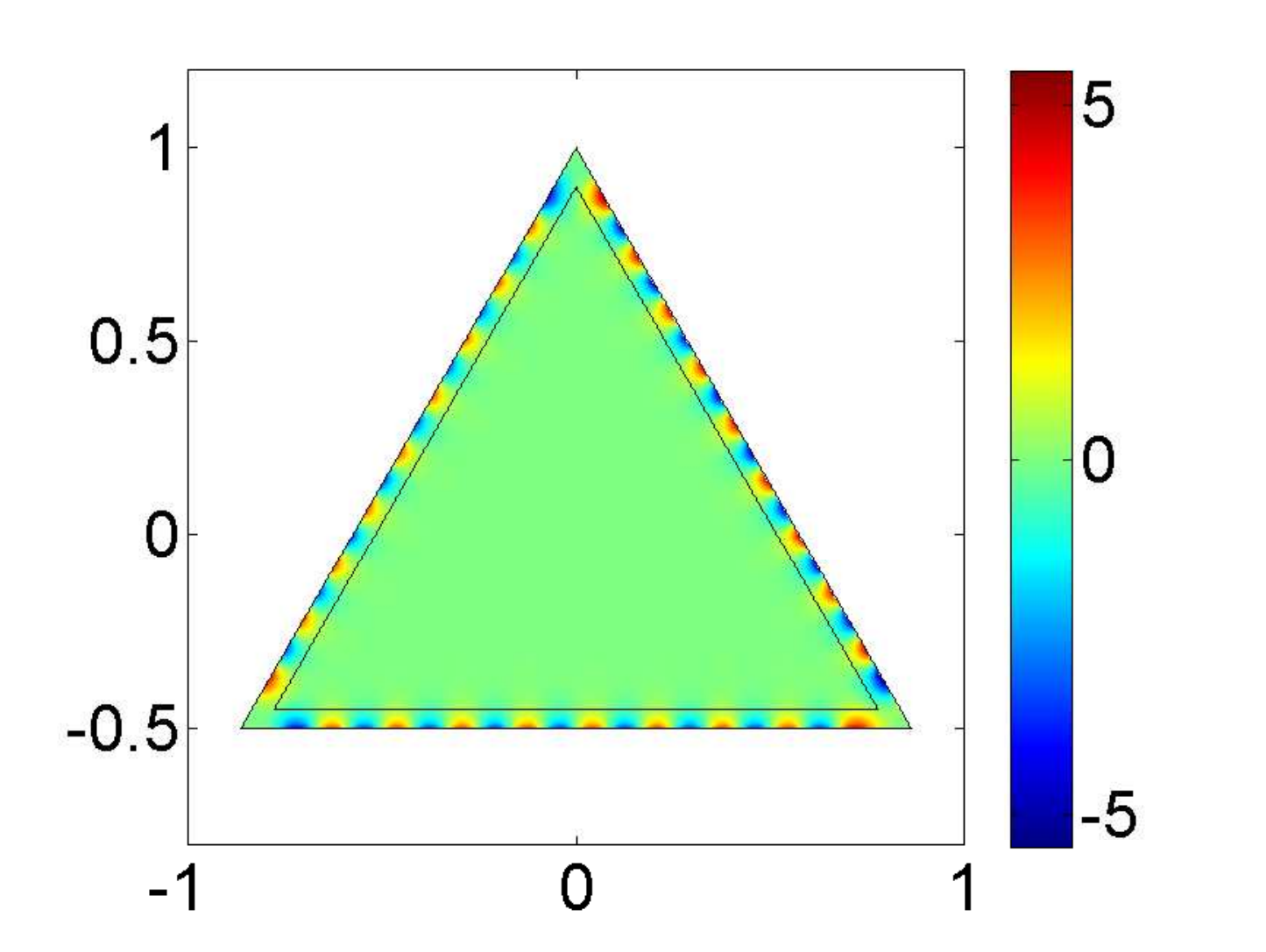}}%
\subfigure[$k=1.0001$]
{\includegraphics[width=0.32 \textwidth]{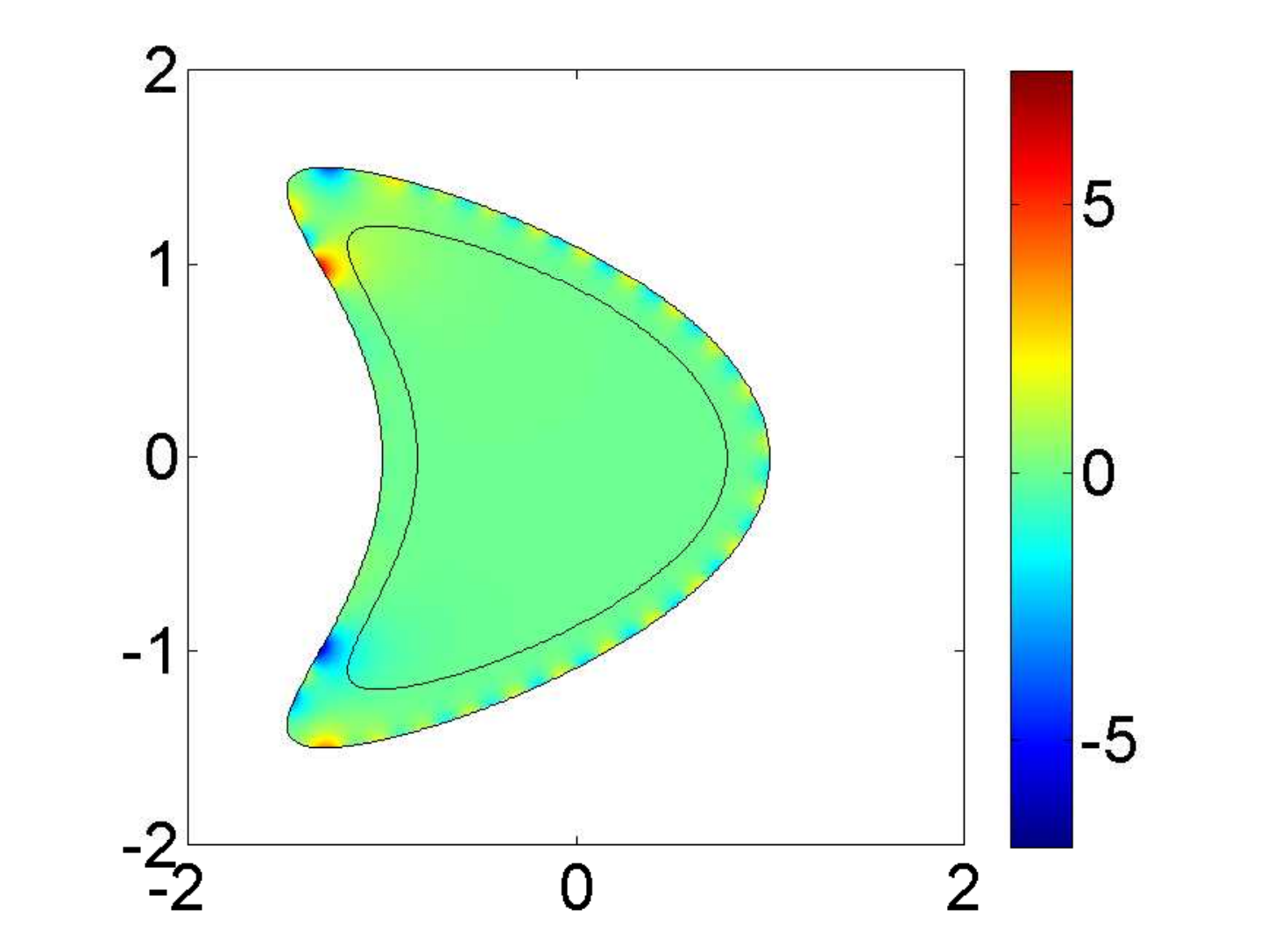}}\\
\caption{\label{fig:medium_layer} Transmission eigenfunctions $v$'s. (a): $\mathbf{n}=30$ in the outside layer and $\mathbf{n}=4$ in the inside triangle; (b): $\mathbf{n}=30$ in the outside layer and the inside kite is a sound-soft obstacle. }
\end{figure}

Finally, we would like to remark that if $0<\mathbf{n}<1$, all the surface localization results presented in the above numerical examples still hold with $v$ replaced by $w$.

\section{Super-resolution wave imaging}

In this section, we consider an interesting and practically important application of the SLEs presented in the previous section. We propose an inverse scattering scheme that makes use of the longly neglected interior resonant modes to recover the unknown or inaccessible scatterer. It turns out that the proposed scheme can produce strikingly high imaging resolution compared to many existing imaging schemes. Let $\Omega \subset \mathbb{R}^d, d=2,3$, be a bounded domain with a Lipschitz boundary $\partial \Omega$ and a connected complement $\mathbb{R}^d\backslash\overline{\Omega}$. Let $\nu$ denote the unit outward normal to $\partial \Omega$. Throughout the rest of this section, we shall assume that the refractive index $\mathbf{n}^2$ is a real-valued bounded function such that $\mathbf{n}^2(x)\equiv 1$ for $x\in \mathbb{R}^d \setminus \overline{\Omega}$ and $1/|\mathbf{n}^2-1| \in L^{\infty}(\Omega)$. We take the incident field $u^i$ to be a time-harmonic plane wave of the form
\[
u^i:=u^i(x,\theta,k)=\mathrm{e}^{\mathrm{i}k x\cdot \theta}, \quad  x \in \mathbb{R}^d,
\]
where $\mathrm{i}=\sqrt{-1}$ is the imaginary unit, $k=\omega / c$ the wavenumber, $\omega \in \mathbb{R}_{+}$ and $c \in \mathbb{R}_{+}$ the angular frequency and sound speed, respectively, $\theta \in \mathbb{S}^{d-1}$ the direction of propagation and $\mathbb{S}^{d-1}:=\{x \in \mathbb{R}^d: |x|=1 \}$ is the unit sphere in $\mathbb{R}^d$. Clearly, the incident field $u^i$ satisfies the Helmholtz equation
\[
\Delta u^i + k^2 u^i =0\quad \text{in} \  \mathbb{R}^d.
\]
Physically, the presence of the scatterer $\Omega$ interrupts the propagation of the incident wave $u^i$, giving rise to the scattered field $u^s$. Let $u:=u^i+u^s$ denote the total wave field. The forward scattering problem is modeled by the following system
\begin{equation}\label{eq:forwardpro}
\left\{
\begin{array}{ll}
\Delta u+k^2 \mathbf{n}^2(x)u =0  & \text{in} \  \mathbb{R}^d, \medskip \\
u = u^i+u^s  & \text{in} \  \mathbb{R}^d, \medskip \\
\displaystyle \lim\limits_{r\to\infty} r^{\frac{d-1}{2}}\left(\frac{\partial u^s}{\partial r}-\mathrm{i}ku^s\right)=0,
\end{array}
\right.
\end{equation}
where $r=|x|$ and the last limit in \eqref{eq:forwardpro} characterizes the outgoing nature of the scattered wave field $u^s$. The well-posedness of the scattering system \eqref{eq:forwardpro} is established \cite{CK}, and in particular, there exists a unique solution $u\in H_{loc}^2(\mathbb{R}^d)$. Furthermore, the scattered field has the following asymptotic expansion:
\[
u^s(x,\theta,k)=\frac{\mathrm{e}^{\mathrm{i}\frac{\pi}{4}}}{\sqrt{8k\pi}}\left(\mathrm{e}^{-\mathrm{i}\frac{\pi}{4}} \sqrt{\frac{k}{2\pi}}  \right)^{d-2}\frac{\mathrm{e}^{\mathrm{i}kr}}{r^{\frac{d-1}{2}}} \bigg\{u^{\infty}(\hat{x},\theta,k)+\mathcal{O}\bigg(\frac{1}{r} \bigg) \bigg\} \quad \text{as} \ r\to\infty,
\]
which holds uniformly for all directions $\hat{x}:=x/|x|\in \mathbb{S}^{d-1}$. The complex-valued function $u^{\infty}(\hat{x},\theta,k)=u^{\infty}(\hat{x},\mathrm{e}^{\mathrm{i}kx\cdot \theta  } ) $, defined on the unit sphere $\mathbb{S}^{d-1}$, is known as the far-field pattern of $u$, which encodes the information of the refractive index $\mathbf{n}^2$. We are concerned with the inverse problem of imaging the support of the inhomogeneity, namely $\Omega$, by knowledge of $u^{\infty}(\hat{x},\theta,k)$ for $\hat{x},\theta\in \mathbb{S}^{d-1}$ and $k\in I:=(\kappa_0,\kappa_1)$, which is an open interval in $\mathbb{R}_{+}$. It can be recast as the following nonlinear operator equation
\begin{equation}\label{eq:ip1}
\mathcal{F}(\Omega, \mathbf{n})=u^{\infty}(\hat x, \theta, k),\quad   \hat x\in\mathbb{S}^{d-1},\ \theta\in\mathbb{S}^{d-1}, \ k\in I,
\end{equation}
where $\mathcal{F}$ is defined by the Helmholtz system \eqref{eq:forwardpro}. Such an inverse problem is a prototypical model for many industrial and engineering applications including medical imaging and nondestructive testing. There is the well-known Abbe diffraction limit for imaging the fine details of $\partial\Omega$ \cite{LLT}. In fact, one has a minimum resolvable distance of $\lambda/(2 \mathcal{N})$, where $\lambda$ and $\mathcal{N}$ stand for the wavelength and numerical aperture respectively. In modern optics, the Abbe resolution limit is roughly about half of the wavelength. Here, based on the use of the SLEs, we develop an imaging scheme that can break the Abbe resolution limit in recovering the fine details of $\partial\Omega$ for \eqref{eq:ip1}, independent of $\mathbf{n}$, in certain scenarios of practical interest.

The proposed imaging scheme consists of three phases. In Phase $\mathrm{I}$, we determine the transmission eigenvalues within the interval $I$ by knowledge of the far-field data in \eqref{eq:ip1}, namely $u_\infty(\hat x, \theta, k)$ for $\hat x\in\mathbb{S}^{d-1},\ \theta\in\mathbb{S}^{d-1}$ and $k\in I$. In Phase $\mathrm{II}$, we determine the corresponding transmission eigenfunctions associated to the transmission eigenvalues computed from Phase $\mathrm{II}$. Finally, in Phase~$\mathrm{III}$, we make use the transmission eigenfunctions from Phase~$\mathrm{II}$ to design an imaging functional which can be used to determine the shape of the medium scatterer, namely $\Omega$.

\subsection{Phase $\mathrm{I}$: determination of transmission eigenvalues}

In this part, we consider the determination of the transmission eigenvalues within the interval $I$ by knowledge of the far-field data in \eqref{eq:ip1}. In fact, this problem has been addressed in \cite{Cakoni10b}. Nevertheless, for completeness and self-containedness, we briefly discuss the main procedure as well as the rationale behind the method. To that end, for any given $z\in \mathbb{R}^d$, we let $\Psi(x,z,k)$ be the fundamental solution \cite{CK} to the PDO $-\Delta-k^2$:
\[
\Psi(x,z,k)=\left\{
\begin{array}{ll}
\frac{\mathrm{i}}{4}H_0^{(1)}(k|x-z|),   &{d=2,} \medskip \\
\displaystyle \frac{1}{4\pi}\frac{\mathrm{e}^{\mathrm{i}k|x-z|}}{|x-z|},   &{d=3,}
\end{array}
\right.
\]
where $H_0^{(1)}$ is the first-kind Hankel function of zeroth-order. Let $\Psi^{\infty}(\hat{x},z,k)$ signify the far-field pattern of $\Psi(x,z,k)$, which is given by
\[
\Psi^{\infty}(\hat{x},z,k)=\left\{
\begin{array}{ll}
\displaystyle \frac{\mathrm{e}^{\frac{\mathrm{i}\pi}{4}}}{\sqrt{8k\pi}}\mathrm{e}^{-\mathrm{i}k\hat{x}\cdot z},   &{d=2,} \medskip \\
\displaystyle \frac{1}{4\pi}\mathrm{e}^{-\mathrm{i}k\hat{x}\cdot z},   &{d=3}.
\end{array}
\right.
\]
The determination of the transmission eigenvalues is based on the so-called linear sampling method (LSM), which is a qualitative method in inverse scattering theory \cite{Colton00}. The core of the LSM is the following far-field equation
\begin{equation}\label{eq:ffeqn}
(F_k g)(\hat{x})=\Psi^{\infty}(\hat{x},z,k),\quad  z\in\mathbb{R}^d,\ \ g\in L^2(\mathbb{S}^{d-1}),
\end{equation}
where $F_k: L^2(\mathbb{S}^{d-1})\mapsto L^2(\mathbb{S}^{d-1})$ is the far-field operator defined by
\begin{equation}\label{eq:ffope}
(F_kg)(\hat{x}):=\int_{\mathbb{S}^{d-1}} u^{\infty}(\hat{x},\theta,k)g(\theta)\, \text{d}s(\theta), \quad \hat{x}\in \mathbb{S}^{d-1}.
\end{equation}
Let $F_k^{\epsilon}$ be the far-field operator corresponding to noisy measurement of the far-field data $u^{\infty,\epsilon}(\hat{x},\theta,k)$, where $\epsilon\in\mathbb{R}_+$ signifies the noise level. Define the Herglotz wave function
\begin{equation}\label{eq:Herglotz}
v_{g,k}(x):= H_k g(x) =\int_{\mathbb{S}^{d-1}} g(\theta)\mathrm{e}^{\mathrm{i}kx\cdot\theta}\, \text{d}s(\theta), \quad x\in\mathbb{R}^d.
\end{equation}
We have the following result:

\begin{lem}[ \cite{Arens03}]\label{le:noteigen}
If $k$ is not a transmission eigenvalue, then there exists an approximate solution $g_{\epsilon}(\cdot,z)\in L^2(\mathbb{S}^{d-1})$ of the far-field equation \eqref{eq:ffeqn} such that $H_k g_{\epsilon}(\cdot,z)$ converges in the $H^1(\Omega)$ norm as $\epsilon \to 0$ when $z\in \Omega$.
\end{lem}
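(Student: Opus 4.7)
The plan is to realise $H_k g_\epsilon$ as a Herglotz approximation of the $v$-component of a modified (inhomogeneous) interior transmission problem whose boundary data are the Cauchy data of the point source $\Psi(\cdot,z,k)$. The key reformulation is as follows: if one uses the Herglotz wave $v_{g,k}=H_k g$ as the incident field in \eqref{eq:forwardpro}, linearity together with Rellich's lemma and unique continuation in the exterior show that $F_k g = \Psi^\infty(\cdot,z,k)$ is equivalent to the scattered field coinciding with $\Psi(\cdot,z,k)$ in $\mathbb{R}^d\setminus\overline{\Omega}$. Denoting by $w$ the total field inside $\Omega$ and by $v$ the incident field restricted to $\Omega$, this translates the far-field equation into the problem of finding $(w,v)\in H^1(\Omega)\times H^1(\Omega)$ satisfying
\begin{equation*}
\Delta w + k^2\mathbf{n}^2 w = 0,\quad \Delta v+ k^2 v=0 \quad \text{in}\ \Omega,
\end{equation*}
subject to the inhomogeneous transmission conditions $w-v=\Psi(\cdot,z,k)$ and $\partial_\nu w-\partial_\nu v=\partial_\nu \Psi(\cdot,z,k)$ on $\partial\Omega$. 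Because $z\in\Omega$, the right-hand sides are smooth on $\partial\Omega$.

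Next I would invoke the Fredholm theory for the transmission system. Since $k$ is not a transmission eigenvalue, the homogeneous counterpart admits only the trivial solution. A standard compact-perturbation argument, realised for instance by recasting $w-v-\widetilde\Psi\in H_0^2(\Omega)$ (for a suitable smooth extension $\widetilde\Psi$ of the boundary data) in the fourth-order form \eqref{eq:transn2} as in \cite{CCH,CK}, then yields a unique solution $(w_z,v_z)$ depending continuously on the data. The Helmholtz solution $v_z\in H^1(\Omega)$ is the target of approximation: by the classical density of Herglotz wave functions in the closed subspace $\{u\in H^1(\Omega):(\Delta+k^2)u=0\}$, there exists a family $g_\epsilon\in L^2(\mathbb{S}^{d-1})$ such that $H_k g_\epsilon\to v_z$ in $H^1(\Omega)$ as $\epsilon\to 0$, which directly delivers the asserted $H^1$-convergence.

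It finally remains to verify that these $g_\epsilon$ indeed render \eqref{eq:ffeqn} approximately satisfied. Let $u^s_\epsilon$ be the scattered field generated by the incident wave $H_k g_\epsilon$; by the well-posedness of \eqref{eq:forwardpro} with respect to the incident data in $H^1$, $u^s_\epsilon$ converges in $H^1_{\mathrm{loc}}(\mathbb{R}^d)$ to the exterior extension of $w_z-v_z$, which by construction coincides with $\Psi(\cdot,z,k)$ on $\mathbb{R}^d\setminus\overline\Omega$. Continuity of the far-field map from $H^1_{\mathrm{loc}}(\mathbb{R}^d\setminus\overline\Omega)$ to $L^2(\mathbb{S}^{d-1})$ then gives $F_k g_\epsilon\to \Psi^\infty(\cdot,z,k)$, so $g_\epsilon(\cdot,z)$ is an approximate solution of the far-field equation. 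The main obstacle is the transfer of the non-eigenvalue hypothesis from the homogeneous interior transmission problem to the inhomogeneous one through the Fredholm alternative; this is delicate because the transmission system is non-self-adjoint and non-elliptic, and it is precisely where the assumption that $k\notin\{k_\ell\}$ is used. Once that step is granted, the density of Herglotz waves and continuous dependence of the scattering problem on the incident field complete the proof.
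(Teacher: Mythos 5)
Your argument is correct in substance, but be aware that the paper offers no proof of this lemma at all: it is imported from \cite{Arens03}. What you have reconstructed is the classical linear-sampling justification in the style of \cite{CCH, CK}: for $z\in\Omega$ and $k$ not a transmission eigenvalue, solve the inhomogeneous interior transmission problem with the Cauchy data of $\Psi(\cdot,z,k)$ on $\partial\Omega$, approximate its $v$-component $v_z$ in $H^1(\Omega)$ by Herglotz waves $H_kg_\epsilon$ (Lemma \ref{le:HergDense}, \cite{Weck04}), and use the fact that the medium scattering map depends on the incident field only through its restriction to $\Omega$ (Lippmann--Schwinger) to conclude $F_kg_\epsilon\to\Psi^{\infty}(\cdot,z,k)$ while $H_kg_\epsilon\to v_z$ in $H^1(\Omega)$; this delivers exactly the stated approximate solvability. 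Arens' own proof is genuinely different and stronger: via the factorization of the far-field operator he shows that the \emph{Tikhonov-regularized} solutions of \eqref{eq:ffeqn} already have this behaviour, which is the version that certifies what a numerical implementation actually computes, whereas your construction merely exhibits some approximating family --- which is all the lemma as stated requires. Two refinements you should make explicit. First, injectivity of the homogeneous interior transmission problem does not by itself yield existence for the inhomogeneous one; you need the Fredholm (index-zero) property, which under the paper's standing hypothesis $1/|\mathbf{n}^2-1|\in L^{\infty}(\Omega)$ (with $\mathbf{n}^2-1$ one-signed, at least near $\partial\Omega$) follows from the fourth-order $H_0^2(\Omega)$ formulation or the framework of \cite{Sylvester12}; so the step you flag as delicate is fine, but it must be anchored to those hypotheses. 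Second, the far-field equation is in general \emph{not} exactly solvable even when $z\in\Omega$ and $k$ is not an eigenvalue, since $v_z$ is rarely itself a Herglotz wave; your opening ``equivalence'' should therefore be read only as motivation, and the real content is your approximation step, where the convergence $F_kg_\epsilon\to\Psi^{\infty}(\cdot,z,k)$ follows from continuity of the map $u^i|_{\Omega}\mapsto u^{\infty}$ in the $L^2(\Omega)$ (a fortiori $H^1(\Omega)$) norm.
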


However, if $k$ is a transmission eigenvalue, the statement in Lemma \ref{le:noteigen} is not true. In this case, one can show that for all points $z\in \Omega$, there is
\begin{equation}\label{eq:normlim}
\lim\limits_{\epsilon \to 0}\|F_k^{\epsilon}g_{\epsilon}(\cdot,z) -\Psi^{\infty}(\cdot,z,k)\|_{L^2(\mathbb{S}^{d-1})}=0.
\end{equation}
Moreover, the following lemma characterizes the solution to \eqref{eq:normlim} when $k$ is a transmission eigenvalue.

\begin{lem}\label{le:iseigen}\cite{Cakoni10b}
Suppose that $k$ is a transmission eigenvalue and assume that \eqref{eq:normlim} holds. Then for almost every $z\in \Omega$, $\|H_k g_{\epsilon}(\cdot,z)  \|_{L^2 (\Omega)}$ can not be bounded as $\epsilon \to 0$.
\end{lem}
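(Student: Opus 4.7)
The plan is to argue by contradiction. Suppose that, on a measurable set $E\subset\Omega$ of positive measure, $\|H_k g_\epsilon(\cdot,z)\|_{L^2(\Omega)}$ remains bounded along some subsequence as $\epsilon\to 0$ for every $z\in E$. Fix such a $z$ and set $v_\epsilon:=H_k g_\epsilon(\cdot,z)$. Let $u_\epsilon^s$ denote the scattered field generated by the Herglotz incident wave $v_\epsilon$ through \eqref{eq:forwardpro}; by superposition, $F_k g_\epsilon$ is exactly the far-field pattern of $u_\epsilon^s$. Hypothesis \eqref{eq:normlim} together with Rellich's lemma and the standard far-field-to-near-field stability for the exterior Helmholtz problem then forces $u_\epsilon^s\to\Psi(\cdot,z,k)$ in $H^1_{\mathrm{loc}}(\mathbb{R}^d\setminus\overline{\Omega})$. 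Taking traces, both $u_\epsilon^s|_{\partial\Omega}$ and $\partial_\nu u_\epsilon^s|_{\partial\Omega}$ converge to the corresponding Cauchy data of $\Psi(\cdot,z,k)$.

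Next I would pass to a weak interior limit. After extracting a subsequence, $v_\epsilon\rightharpoonup v_z$ weakly in $L^2(\Omega)$, and interior elliptic regularity applied to $\Delta v_\epsilon+k^2 v_\epsilon=0$ upgrades this to weak $H^1_{\mathrm{loc}}$-convergence. Defining the associated total field $w_\epsilon:=v_\epsilon+u_\epsilon^s|_\Omega$, one similarly obtains $w_\epsilon\rightharpoonup w_z$ in $L^2(\Omega)$ with $\Delta w_z+k^2\mathbf{n}^2 w_z=0$ passing to the limit. Coupling this with the exterior Cauchy-data convergence from the previous step shows that the limit pair $(w_z,v_z)\in H^1(\Omega)\times H^1(\Omega)$ solves the inhomogeneous interior transmission problem
\begin{align*}
&\Delta w_z+k^2\mathbf{n}^2 w_z=\Delta v_z+k^2 v_z=0\quad\text{in }\Omega,\\
&w_z-v_z=\Psi(\cdot,z,k),\qquad \partial_\nu w_z-\partial_\nu v_z=\partial_\nu\Psi(\cdot,z,k)\quad\text{on }\partial\Omega.
\end{align*}

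The contradiction is then extracted from the assumption that $k$ is a transmission eigenvalue. A Fredholm-alternative analysis of the ITP (cf.\ \cite{Cakoni10b,CCH}) shows that, at such a $k$, solvability of the above inhomogeneous problem requires its Cauchy data to satisfy finitely many orthogonality conditions against nontrivial solutions $(w_0,v_0)$ of the homogeneous adjoint ITP. Applying Green's identity to $\Psi(\cdot,z,k)$ against these adjoint eigenpairs, each such orthogonality condition reduces, by the defining property of the fundamental solution, to the vanishing at $z$ of a Helmholtz solution built from $w_0$ and $v_0$. If this vanishing held for all $z$ in a positive-measure subset of $E$, unique continuation for the Helmholtz equation would force $w_0\equiv v_0\equiv 0$, contradicting the non-triviality of the adjoint eigenpair.

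The main obstacle is the interior convergence argument in the second paragraph: the bare $L^2(\Omega)$-boundedness of $v_\epsilon$ is not enough to take Cauchy-data traces of $w_\epsilon-v_\epsilon$ on $\partial\Omega$ in a topology strong enough to match them against those of $\Psi(\cdot,z,k)$. This is overcome by combining the Helmholtz equations satisfied by $v_\epsilon$ and $w_\epsilon$ with the well-posedness and continuous-dependence estimates for \eqref{eq:forwardpro}, propagating the uniform $L^2$-bounds through interior elliptic regularity and the trace theorem to the required uniform boundary Cauchy-data bounds.
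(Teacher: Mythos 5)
The paper does not prove this lemma at all: it is quoted verbatim from \cite{Cakoni10b}, so the comparison is with the argument in that reference, whose skeleton (contradiction, weak limit, limit pair solving the interior transmission problem with data $\Psi(\cdot,z,k)$, pairing with eigenfunctions, pointwise vanishing, analyticity/unique continuation) your proposal correctly reproduces. However, two of your steps are not sound as written. First, the claim that \eqref{eq:normlim} ``together with standard far-field-to-near-field stability'' forces $u^s_\epsilon\to\Psi(\cdot,z,k)$ in $H^1_{\mathrm{loc}}$ outside $\Omega$ is not a legitimate step: the map from far-field patterns to near fields is the inverse of a compact operator and is unbounded, so $L^2(\mathbb{S}^{d-1})$-convergence of the far fields of a sequence gives no near-field convergence. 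The correct order (and the one used in \cite{Cakoni10b}) is the reverse: extract $v_\epsilon\rightharpoonup v_z$ in $L^2(\Omega)$, use that the scattered field and its far field depend continuously (indeed compactly) on the incident field restricted to $\Omega$ through the Lippmann--Schwinger formulation of \eqref{eq:forwardpro}, identify the far field of the limiting scattered field $u^s_z$ with $\Psi^\infty(\cdot,z,k)$ by uniqueness of limits, and only then invoke Rellich to get $u^s_z=\Psi(\cdot,z,k)$ in $\mathbb{R}^d\setminus\overline{\Omega}$; the transmission data on $\partial\Omega$ then come from the $H^2_{\mathrm{loc}}$ regularity of $u^s_z$ across $\partial\Omega$, not from traces of $w_\epsilon$ and $v_\epsilon$ separately. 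Your ``main obstacle'' paragraph does not repair this: uniform Cauchy-data bounds only give weakly convergent traces, and the identification of their limit must still come from Rellich applied to the limit field. A further point you pass over silently is that \eqref{eq:normlim} involves the noisy operator $F_k^\epsilon$, and replacing it by $F_k$ requires $\|(F_k^\epsilon-F_k)g_\epsilon\|_{L^2(\mathbb{S}^{d-1})}\to0$, which does not follow from boundedness of $H_kg_\epsilon$ alone and has to be built into the hypotheses (as in the regularized formulation of \cite{Cakoni10b}).

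Second, the concluding contradiction is asserted exactly where the work lies: you appeal to a Fredholm alternative for the interior transmission problem and claim, without computation, that the orthogonality conditions against the adjoint eigenpairs ``reduce to the vanishing at $z$ of a Helmholtz solution built from $w_0$ and $v_0$.'' This detour can be made rigorous (the adjoint system and the boundary computation are essentially \eqref{eq:an1}--\eqref{eq:ar5} in the proof of Theorem \ref{th:IndeedAppro}, and one also needs the index-zero property to know the adjoint kernel is nontrivial), but it is unnecessary and, as written, it is a gap rather than a proof. The direct argument needs no adjoint: once $(w_z,v_z)$ solves the transmission problem with $w_z-v_z=\Psi(\cdot,z,k)$ and $\partial_\nu(w_z-v_z)=\partial_\nu\Psi(\cdot,z,k)$ on $\partial\Omega$, insert these data into Green's representation formula for the eigenfunction $v_0$ at the point $z$, and eliminate the two resulting boundary integrals by Green's second identity applied to the pairs $(v_0,v_z)$ and, using $w_0=v_0$, $\partial_\nu w_0=\partial_\nu v_0$ on $\partial\Omega$, to $(w_0,w_z)$; this gives $v_0(z)=0$. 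Since $v_0$ solves the constant-coefficient Helmholtz equation it is real-analytic, so vanishing on a set of positive measure forces $v_0\equiv0$; then $w_0$ has zero Cauchy data on $\partial\Omega$ and unique continuation gives $w_0\equiv0$, contradicting the nontriviality of the eigenpair. Note also that your appeal to ``unique continuation from a positive-measure set'' is only safe for the real-analytic (constant-coefficient) component; for the variable-coefficient component you should argue via vanishing Cauchy data as above.
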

Since $H_k$ is compact, there exists a constant $M> 0$ such that
\[
\|H_k g_{\epsilon}(\cdot,z)  \|_{L^2 (\Omega)} \\ \leq M\|g_{\epsilon}(\cdot,z) \|_{L^2(\mathbb{S}^{d-1})} .
\]
Thus, $\|g_{\epsilon}(\cdot,z) \|_{L^2(\mathbb{S}^{d-1})}$ can not be bounded as $\epsilon \to 0$ if $k$ is a transmission eigenvalue.

By the above two lemmas, we note that $\|g_{\epsilon}(\cdot,z) \|_{L^2(\mathbb{S}^{d-1})} $ behaves quite differently when $k^2$ is a transmission eigenvalue or not. Hence, one can use $\|g_{\epsilon}(\cdot,z) \|_{L^2(\mathbb{S}^{d-1})}$ as an indicator to identify if $k$ is a transmission eigenvalue or not. We formulate the following scheme, dubbed as Algorithm \uppercase\expandafter{\romannumeral1}, to determine the transmission eigenvalues.

\begin{table}[htbp]
\begin{tabular}{p{1.5cm} p{11cm}}
\toprule
\multicolumn{2}{l}{\textbf{Algorithm \uppercase\expandafter{\romannumeral1}:}\ Determination of transmission eigenvalues}\\
\midrule
Step 1 & Collect a family of far-field data $u^{\infty,\epsilon}(\hat{x},\theta,k)$ for $(\hat{x},\theta,k)\in \mathbb{S}^{d-1}\times \mathbb{S}^{d-1} \times I$, where $I$ is an open interval in $\mathbb{R}_{+}$.\\
Step 2 & Pick a point $z \in \Omega$ (a-priori information) and for each $k\in I$, solve \eqref{eq:ffeqn} to obtain the solution $g_{\epsilon}(\cdot,z)$.\\
Step 3 & Plot $\|g_{\epsilon}(\cdot,z) \|_{L^2(\mathbb{S}^{d-1})}$ against $k\in I$ and find the transmission eigenvalues where peaks appear in the graph.\\
\bottomrule
\end{tabular}
\end{table}

We note that $\Omega$ is unknown, so we consider $\|g_{\epsilon}(\cdot,z) \|_{L^2(\mathbb{S}^{d-1})}$, instead of $\|H_k g_{\epsilon}(\cdot,z)  \|_{L^2 (\Omega)}$ in Step 3 of Algorithm \uppercase\expandafter{\romannumeral1} though $\|H_k g_{\epsilon}(\cdot,z)  \|_{L^2 (\Omega)}$ also behaves differently when $k$ is a transmission eigenvalue or not.


\subsection{Phase $\mathrm{II}$: determination of transmission eigenfunctions}

In Phase~$\mathrm{I}$, we determine the transmission eigenvalues within the interval $I$ by knowledge of the far-field data in \eqref{eq:ip1}. We proceed to determine the corresponding transmission eigenfunctions. To that end, we first recall the Herglotz wave introduced in \eqref{eq:Herglotz}
where $g\in L^2(\mathbb{S}^{d-1})$ is referred to as the Herglotz kernel of $v_{g,k}$.

\begin{lem}\label{le:HergDense}\cite{Weck04}
Let $\Omega$ be a bounded domain of class $C^{\alpha, 1}$, $\alpha\in\mathbb{N}\cup\{0\}$, in $\mathbb{R}^d$. Denote by $\mathbb{H}$ the space of all Herglotz wave functions of the form \eqref{eq:Herglotz}. Define, respectively,
\[
\mathbb{H}(\Omega):=\{ u|_{\Omega}:u\in\mathbb{H} \},
\]
and
\[
\mathfrak{H}(\Omega):=\{u\in C^{\infty}(\Omega): \Delta u +k^2 u=0 \text{ in } \Omega\}.
\]
Then $\mathbb{H}(\Omega)$ is dense in $\mathfrak{H}(\Omega)\cap H^{\alpha+1}(\Omega)$ with respect to the $H^{\alpha+1}(\Omega)$-norm.
\end{lem}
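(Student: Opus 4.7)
My plan is a Hahn--Banach argument combined with a Fourier--Laplace factorisation. Suppose $\ell\in(H^{\alpha+1}(\Omega))^{*}$ annihilates $\mathbb{H}(\Omega)$; it suffices to show $\ell(u)=0$ for every $u\in\mathfrak{H}(\Omega)\cap H^{\alpha+1}(\Omega)$. The assumption $\partial\Omega\in C^{\alpha,1}$ provides a bounded Sobolev extension $H^{\alpha+1}(\Omega)\to H^{\alpha+1}(\mathbb{R}^d)$, so by Hahn--Banach $\ell$ may be lifted to a compactly supported distribution $\tilde\ell\in H^{-(\alpha+1)}(\mathbb{R}^d)$ with $\mathrm{supp}\,\tilde\ell\subset\overline{\Omega}$.

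The first step is to show that every plane wave $x\mapsto e^{\mathrm{i}kx\cdot\theta}$, $\theta\in\mathbb{S}^{d-1}$, lies in the $H^{\alpha+1}(\Omega)$-closure of $\mathbb{H}(\Omega)$: an $L^2(\mathbb{S}^{d-1})$-mollifier $g_n$ of the Dirac mass at $\theta$ yields Herglotz waves $v_{g_n,k}$ that converge to $e^{\mathrm{i}kx\cdot\theta}$ uniformly with all derivatives on $\overline{\Omega}$, hence in $H^{\alpha+1}(\Omega)$. Thus $\ell(e^{\mathrm{i}kx\cdot\theta})=0$ for every $\theta\in\mathbb{S}^{d-1}$. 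I would then introduce the Fourier--Laplace transform
\[
\Phi(\xi):=\langle\tilde\ell,e^{\mathrm{i}\xi\cdot x}\rangle,\qquad \xi\in\mathbb{C}^d,
\]
which is entire and satisfies a Paley--Wiener bound $|\Phi(\xi)|\lesssim (1+|\xi|)^{\alpha+1}e^{C|\mathrm{Im}\,\xi|}$ with $C$ controlled by $\mathrm{diam}(\Omega)$. By the plane-wave step, $\Phi$ vanishes on the real sphere $\{k\theta:\theta\in\mathbb{S}^{d-1}\}$, which is a maximal totally real submanifold of the complex quadric $Q:=\{\xi\in\mathbb{C}^d:\xi\cdot\xi=k^2\}$; since $Q$ is connected, holomorphic continuation forces $\Phi\equiv 0$ on $Q$.

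Because $\mathbb{C}^d$ is Stein, Cartan's Theorem~B (or direct global division along the smooth hypersurface $Q$) yields an entire function $\Psi$ with $\Phi(\xi)=(\xi\cdot\xi-k^2)\Psi(\xi)$. Cauchy estimates on polydiscs transverse to $Q$ transfer the Paley--Wiener growth from $\Phi$ to $\Psi$ with at worst a mild polynomial loss, so by Paley--Wiener--Schwartz $\Psi=\hat h$ for a distribution $h\in H^{-\alpha+1}(\mathbb{R}^d)$ with $\mathrm{supp}\,h\subset\overline{\Omega}$, which translates into $(-\Delta-k^2)h=\tilde\ell$ in $\mathcal{D}'(\mathbb{R}^d)$. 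To finish, given $u\in\mathfrak{H}(\Omega)\cap H^{\alpha+1}(\Omega)$ I choose a Sobolev extension $\tilde u\in H^{\alpha+1}(\mathbb{R}^d)$ and compute
\[
\ell(u)=\langle\tilde\ell,\tilde u\rangle=\langle h,(-\Delta-k^2)\tilde u\rangle.
\]
Since $(-\Delta-k^2)\tilde u\equiv 0$ in $\Omega$ and $\mathrm{supp}\,h\subset\overline{\Omega}$, the pairing reduces to a boundary contribution that a Green's-identity calculation, combined with the trace theory available under $\partial\Omega\in C^{\alpha,1}$, eliminates.

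The main obstacle is the Paley--Wiener bookkeeping of the third paragraph: one has to control $\Psi$ uniformly in a neighbourhood of $Q$, where the division by $\xi\cdot\xi-k^2$ degenerates, and then translate the resulting growth estimate into a distribution $h$ whose order and support are sharp enough to legitimise the final duality pairing without leftover boundary terms. The boundary regularity $\partial\Omega\in C^{\alpha,1}$ is used in a genuinely non-trivial way both for the Sobolev extensions invoked at the beginning and at the end and for the Green's identity that kills the boundary contribution.
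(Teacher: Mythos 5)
The paper does not prove this lemma: it is quoted directly from Weck's paper \cite{Weck04}, so there is no in-paper argument to compare yours against. Taken on its own terms, your Hahn--Banach/Fourier--Laplace strategy (annihilating functional, vanishing on the complex quadric, division by $\xi\cdot\xi-k^2$, Paley--Wiener--Schwartz) is a legitimate route, in the same duality spirit as the classical density proofs, but two of its steps have genuine gaps as written.

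First, the support of $h$. Paley--Wiener--Schwartz only places $\mathrm{supp}\,h$ in the \emph{convex hull} of $\overline{\Omega}$ (an exponential-type bound is the support function of a convex set), so for non-convex $\Omega$ the assertion $\mathrm{supp}\,h\subset\overline{\Omega}$ is a non sequitur. The repair must come from the equation itself: $(-\Delta-k^2)h=\tilde\ell$ shows that $h$ solves the homogeneous Helmholtz equation in $\mathbb{R}^d\setminus\overline{\Omega}$, it vanishes near infinity because it is compactly supported, and unique continuation (analyticity of Helmholtz solutions) then forces $h\equiv 0$ on the \emph{connected} complement. Connectedness of $\mathbb{R}^d\setminus\overline{\Omega}$ is not among the hypotheses you use (it is a standing assumption of the paper, not of the lemma as quoted), and it is genuinely needed: for an annulus the statement is false, since a solution of $Y_0(k|\cdot|)$-type with singularity in the hole is smooth on $\overline{\Omega}$ but cannot be approximated by entire Helmholtz solutions, as its $Y_0$-components on circles inside the annulus are stable under $L^2(\Omega)$-convergence.

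Second, the concluding pairing. ``$\mathrm{supp}\,h\subset\overline{\Omega}$ and $(-\Delta-k^2)\tilde u=0$ in $\Omega$, hence only a boundary term remains, which Green's identity kills'' is not an argument: $h$ is merely a distribution, there is no Green's identity to integrate by parts with, and in general a distribution supported in $\overline{\Omega}$ can pair non-trivially with a function vanishing only on the open set $\Omega$ (think of multipole layers on $\partial\Omega$). What actually closes the proof is the order bookkeeping you gain in the division ($h\in H^{1-\alpha}(\mathbb{R}^d)$, while $(-\Delta-k^2)\tilde u\in H^{\alpha-1}(\mathbb{R}^d)$) combined with the Lipschitz-domain identity $\widetilde{H}^{s}(\Omega)=\{f\in H^{s}(\mathbb{R}^d):\ \mathrm{supp}\,f\subset\overline{\Omega}\}$ for $s=1-\alpha$ (an integer, so no exceptional half-integer issue): then $h$ is an $H^{1-\alpha}$-limit of functions in $C_c^{\infty}(\Omega)$, each of which pairs to zero against $(-\Delta-k^2)\tilde u$, and the limit gives $\ell(u)=0$. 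This density-of-test-functions mechanism --- not traces in a Green formula --- is where the boundary regularity genuinely enters. With these two repairs (plus the division estimates you already flag as delicate), the argument can be completed; as written, it is not complete.
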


The following theorem states that if $k\in\mathbb{R}_+$ is a transmission eigenvalue, then there exists a Herglotz wave function $v_{g_{\epsilon},k}$ such that the scattered field corresponding to this $v_{g_{\epsilon},k}$ as the incident field is nearly vanishing.

\begin{thm}\label{th:Eigenf}
Suppose that $k\in\mathbb{R}_+$ is a transmission eigenvalue in $\Omega$. For any sufficiently small $\epsilon \in \mathbb{R}_{+}$, there exists $g_{\epsilon}\in L^2(\mathbb{S}^{d-1})$ such that
\[
\|F_k g_{\epsilon} \|_{L^2(\mathbb{S}^{d-1})}=\mathcal{O}(\epsilon) \ \ \text{and} \ \   \|v_{g_{\epsilon},k} \|_{L^2(\Omega)}=\mathcal{O}(1),
\]
where $F_k$ is the far field operator defined by \eqref{eq:ffope} and $v_{g_{\epsilon},k}$ is the Herglotz wave function defined by \eqref{eq:Herglotz} with the kernel $g_{\epsilon}$.
\end{thm}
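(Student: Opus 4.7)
The strategy I would follow is to approximate the $v$-component of a transmission eigenfunction pair by a Herglotz wave via Lemma \ref{le:HergDense}, then show that using this Herglotz wave as the incident field produces a scattered field whose far-field pattern is of order $\epsilon$. Concretely, first fix a normalized eigenfunction pair $(w_0, v_0)$ for the transmission eigenvalue $k$ with $\|v_0\|_{L^2(\Omega)} \sim 1$. By Lemma \ref{le:HergDense} applied with $\alpha = 0$ to the Lipschitz domain $\Omega$, there exists $g_\epsilon \in L^2(\mathbb{S}^{d-1})$ such that $\|v_{g_\epsilon, k} - v_0\|_{H^1(\Omega)} \leq \epsilon$. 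The $L^2$-bound $\|v_{g_\epsilon, k}\|_{L^2(\Omega)} = \mathcal{O}(1)$ is then immediate from the triangle inequality, which takes care of the second estimate in the conclusion.

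I would next consider the forward scattering problem \eqref{eq:forwardpro} with incident field $u^i = v_{g_\epsilon, k}$ (a superposition of plane waves with density $g_\epsilon$), producing a total field $u_\epsilon$ and scattered field $u^s_\epsilon$ whose far-field pattern equals $F_k g_\epsilon$ by linearity. The key comparison is against the piecewise-defined ``ideal'' field $\tilde u_\epsilon := w_0\,\chi_\Omega + v_{g_\epsilon, k}\,\chi_{\mathbb{R}^d \setminus \overline{\Omega}}$. Its associated ``scattered'' part $(w_0 - v_{g_\epsilon,k})\,\chi_\Omega$ is compactly supported in $\overline{\Omega}$ and hence carries a trivially zero far-field pattern, while $(\Delta + k^2\mathbf{n}^2)\tilde u_\epsilon = 0$ holds in $\Omega$ and $(\Delta + k^2)\tilde u_\epsilon = 0$ outside $\Omega$ trivially. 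The only defect of $\tilde u_\epsilon$ lies in the transmission conditions on $\partial\Omega$: using $w_0 = v_0$ and $\partial_\nu w_0 = \partial_\nu v_0$ on $\partial\Omega$, the jumps $[\tilde u_\epsilon]$ and $[\partial_\nu \tilde u_\epsilon]$ equal the traces of $v_{g_\epsilon, k} - v_0$ and are therefore of size $\mathcal{O}(\epsilon)$ in $H^{1/2}(\partial\Omega) \times H^{-1/2}(\partial\Omega)$ by the trace theorem.

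The residual $\psi_\epsilon := u_\epsilon - \tilde u_\epsilon$ then satisfies the same interior and exterior Helmholtz equations together with the Sommerfeld radiation condition, with prescribed boundary jumps of order $\epsilon$. The corresponding homogeneous transmission problem coincides with the direct scattering problem having zero incident field, whose unique solution is identically zero; hence by the Fredholm alternative the inhomogeneous problem is well-posed, yielding $\|\psi_\epsilon\|_{H^1_{loc}(\mathbb{R}^d)} = \mathcal{O}(\epsilon)$. Since $u^s_\epsilon = \psi_\epsilon$ outside $\Omega$ and the far-field map from radiating Helmholtz solutions to $L^2(\mathbb{S}^{d-1})$ is bounded (via layer potential representations), one concludes $\|F_k g_\epsilon\|_{L^2(\mathbb{S}^{d-1})} = \mathcal{O}(\epsilon)$ as required.

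The main obstacle I anticipate is the $\epsilon$-uniform well-posedness of the inhomogeneous transmission problem for $\psi_\epsilon$. The crucial insight that makes the argument go through is that this auxiliary problem is \emph{not} the transmission eigenvalue problem \eqref{eq:trans1}: even though $k$ is a transmission eigenvalue, the homogeneous version for $\psi_\epsilon$ (with continuous trace and normal derivative across $\partial\Omega$) is the direct scattering problem with no incident wave, which is always uniquely solvable. A secondary technical issue is that the construction tacitly uses the regularity $w_0 - v_0 \in H_0^2(\Omega)$ underlying formulation \eqref{eq:transn2} in order to make sense of $\tilde u_\epsilon$ and its jumps; this is justified under a mild assumption on $\partial\Omega$, or alternatively by a mollification argument combined with the interior elliptic regularity of the eigenfunction pair.
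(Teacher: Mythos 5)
Your proposal is correct, and its first half is exactly the paper's argument: approximate the $v$-component of the eigenpair by a Herglotz wave via Lemma~\ref{le:HergDense} (the paper only uses $L^2(\Omega)$-closeness; your $H^1(\Omega)$ version with $\alpha=0$ is what you actually need later and is legitimate on a Lipschitz domain), and get $\|v_{g_\epsilon,k}\|_{L^2(\Omega)}=\mathcal{O}(1)$ by the triangle inequality. Where you diverge is the far-field bound: the paper simply cites Proposition 4.2 of \cite{BL1}, which states that an $\mathcal{O}(\epsilon)$-perturbation of a transmission eigenfunction used as incident field produces an $\mathcal{O}(\epsilon)$ far-field pattern, with constant $C(\mathbf{n},k)$; you instead prove this stability from scratch by comparing the true total field with the piecewise field $w_0\chi_\Omega+v_{g_\epsilon,k}\chi_{\mathbb{R}^d\setminus\overline{\Omega}}$ and solving the resulting radiating transmission problem with $\mathcal{O}(\epsilon)$ jump data, whose well-posedness follows from uniqueness of the \emph{direct} scattering problem (Rellich plus unique continuation) and Fredholm theory -- correctly observing that the eigenvalue character of $k$ is irrelevant for this residual problem. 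Two small refinements: your worry about needing $w_0-v_0\in H^2_0(\Omega)$ is unnecessary, since in the $H^1$ formulation \eqref{eq:trans1} one has $\Delta w_0=-k^2\mathbf{n}^2 w_0\in L^2(\Omega)$ and $\Delta(v_{g_\epsilon,k}-v_0)=-k^2(v_{g_\epsilon,k}-v_0)\in L^2(\Omega)$, so Dirichlet traces in $H^{1/2}(\partial\Omega)$ and normal traces in $H^{-1/2}(\partial\Omega)$ are already well defined; and the $\mathcal{O}(\epsilon)$ bound on the normal-derivative jump should be justified by the normal-trace estimate for functions with square-integrable Laplacian (controlled by the $H^1(\Omega)$-norm plus $\|\Delta\cdot\|_{L^2(\Omega)}$), not by the bare trace theorem. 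In exchange for being longer, your route is self-contained and makes explicit why the argument does not collapse at an interior eigenvalue, whereas the paper's citation buys brevity and an off-the-shelf quantitative constant.
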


\begin{proof}
Let $v_k$ be a normalized transmission eigenfunction in $\Omega$ associated to the transmission eigenvalue $k^2$, which means that $v_k$ with $\|v_k\|_{L^2(\Omega)}=1$ is a solution of
\[
\Delta v_k+k^2v_k=0 \quad \text{in } \Omega.
\]
By Lemma \ref{le:HergDense}, for any sufficiently small $\epsilon >0$, there exists $g_{\epsilon}\in L^2(\mathbb{S}^{d-1})$ such that
\begin{equation*}
\|v_{g_{\epsilon},k} -v_k \|_{L^2(\Omega)}< \epsilon,
\end{equation*}
where $v_{g_{\epsilon},k}$ is the Herglotz wave function with the kernel $g_{\epsilon}$. Then, by the triangle inequality,
\[
\|v_{g_{\epsilon},k} \|_{L^2(\Omega)}\leq \|v_{g_{\epsilon},k} -v_k \|_{L^2(\Omega)}+\| v_k \|_{L^2(\Omega)}< \epsilon +\| v_k \|_{L^2(\Omega)},
\]
and
\[
\|v_{g_{\epsilon},k} \|_{L^2(\Omega)}\geq \| v_k \|_{L^2(\Omega)}-\|v_{g_{\epsilon},k} -v_k \|_{L^2(\Omega)} > \| v_k \|_{L^2(\Omega)}- \epsilon.
\]
Thus, one must have that $\|v_{g_{\epsilon},k} \|_{L^2(\Omega)} =\mathcal{O}(1)$.

Furthermore, from the definition of the far-field operator, $F_k g_{\epsilon}$ is the far-field pattern produced by the incident wave $v_{g_{\epsilon},k}$. According to Proposition 4.2 in \cite{BL1}, one has that
\[
\|F_k g_{\epsilon} \|_{L^2(\mathbb{S}^{d-1})}<C\epsilon,
\]
where $C=C(\mathbf{n},k)$ is a positive constant.

The proof is complete.
\end{proof}

By Theorem \ref{th:Eigenf} and normalization if necessary, we can say that the following optimization problem:
\begin{equation}\label{eq:opp1}
\min\limits_{g\in L^2(\mathbb{S}^{d-1})} \|F_k g \|_{L^2(\mathbb{S}^{d-1})} \quad \text{    s.t.  } \|v_{g,k} \|_{L^2(\Omega)}=1
\end{equation}
has at least one (approximate) solution $g_0 \in L^2(\mathbb{S}^{d-1})$ when $k\in\mathbb{R}_+$ is a transmission eigenvalue in $\Omega$. However, since $\Omega$ is unknown, the constraint $\|v_{g,k} \|_{L^2(\Omega)}=1$ in the optimization formulation \eqref{eq:opp1} is unpractical. Nevertheless, it is reasonable to address this issue by considering an alternative optimization problem:
\begin{equation}\label{eq:AlterOptimization}
\min\limits_{g\in L^2(\mathbb{S}^{d-1})} \|F_k g \|_{L^2(\mathbb{S}^{d-1})} \quad \text{    s.t.  } \|v_{g,k} \|_{L^2(B_R)}=1,
\end{equation}
where $B_R$ is an a-priori ball containing $\Omega$.

Let $g_0$ be a ``reasonable" solution to the optimization problem \eqref{eq:AlterOptimization}. Next, we show that the corresponding Herglotz wave $v_{g_0,k}$ is generically indeed an approximation to the transmission eigenfunction $v_k$ associated to the transmission eigenvalue $k$.

\begin{thm}\label{th:IndeedAppro}
Suppose $k\in\mathbb{R}_+$ is a transmission eigenvalue in $\Omega$ and $g_0$ is a solution to the optimization problem \eqref{eq:AlterOptimization} satisfying
\begin{equation}\label{eq:ssn1}
\|F_k g_0\|_{L^2(\mathbb{S}^{d-1})}\leq \epsilon\ll 1.
\end{equation}
If we further assume that $\Omega$ is of class $C^{1,1}$ and $|\mathbf{n}^2-1|\geq \delta_0$ for a certain $\delta_0\in\mathbb{R}_+$ in a neighbourhood of $\partial\Omega$,
 then the Herglotz wave $v_{g_0,k}$ is an approximation to a transmission eigenfunction $v_k$ associated with the transmission eigenvalue $k$ in the $H^2(\Omega)$-norm.
\end{thm}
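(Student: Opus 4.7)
The plan is to convert the smallness of $\|F_k g_0\|_{L^2(\mathbb{S}^{d-1})}$ into smallness of the scattered field traces on $\partial\Omega$, then recognise $v_{g_0,k}$ together with the total field as an approximate transmission eigenfunction pair, and finally produce an $O(\epsilon)$-corrector yielding an exact eigenfunction $v_k$.

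First I would let $u^s$ denote the scattered field generated when $v_{g_0,k}$ is used as the incident wave in the forward system \eqref{eq:forwardpro}, so that $F_k g_0$ equals its far-field pattern. Combining \eqref{eq:ssn1} with Rellich's lemma and the standard stability estimate for the exterior Helmholtz problem yields smallness of $u^s$ in $H^2$ on any compact annulus outside $\Omega$; the $C^{1,1}$-regularity of $\partial\Omega$ together with interior regularity then push this to the trace estimate
\[
\|u^s\|_{H^{3/2}(\partial\Omega)}+\|\partial_\nu u^s\|_{H^{1/2}(\partial\Omega)}=O(\epsilon).
\]
Setting $v:=v_{g_0,k}|_\Omega$ and $w:=(v_{g_0,k}+u^s)|_\Omega$, the PDEs $(\Delta+k^2\mathbf{n}^2)w=0$ and $(\Delta+k^2)v=0$ hold in $\Omega$ with $w-v=u^s|_{\partial\Omega}$ and $\partial_\nu w-\partial_\nu v=\partial_\nu u^s|_{\partial\Omega}$, both of size $O(\epsilon)$ on $\partial\Omega$; hence $(w,v)$ is an $O(\epsilon)$-approximate transmission eigenfunction pair at $k$.

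Next I would seek correctors $(\tilde w,\tilde v)\in H^2(\Omega)\times H^2(\Omega)$ of norm $O(\epsilon)$ satisfying $(\Delta+k^2\mathbf{n}^2)\tilde w=0$ and $(\Delta+k^2)\tilde v=0$ in $\Omega$, together with $\tilde w-\tilde v=-u^s|_{\partial\Omega}$ and $\partial_\nu\tilde w-\partial_\nu\tilde v=-\partial_\nu u^s|_{\partial\Omega}$ on $\partial\Omega$. If such correctors exist, then $(w+\tilde w,v+\tilde v)$ solves \eqref{eq:trans1} exactly, so $v_k:=v+\tilde v$ is a bona fide transmission eigenfunction and
\[
\|v_{g_0,k}-v_k\|_{H^2(\Omega)}=\|\tilde v\|_{H^2(\Omega)}=O(\epsilon),
\]
which is exactly the claim.

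The principal obstacle is precisely the existence of such correctors at a transmission eigenvalue, where the inhomogeneous transmission system generically fails Fredholm solvability in the usual sense. To overcome this I would pass to the fourth-order formulation \eqref{eq:transn2} with $w_0:=w-v\in H^2(\Omega)$ and exploit two ingredients: the uniform ellipticity of $(\Delta+k^2\mathbf{n}^2)(\Delta+k^2)$ together with the $C^{1,1}$-regularity of $\partial\Omega$ yields $H^2$ boundary estimates, while the assumption $|\mathbf{n}^2-1|\geq\delta_0$ in a neighbourhood of $\partial\Omega$ allows one to recover $w$ from $w_0$ near the boundary through the algebraic identity $k^2(1-\mathbf{n}^2)w=(\Delta+k^2)w_0$. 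Finally, the optimality of $g_0$ in \eqref{eq:AlterOptimization} should ensure that the small boundary data $(u^s,\partial_\nu u^s)|_{\partial\Omega}$ is essentially orthogonal to the obstruction space tied to the finite-dimensional transmission eigenspace, allowing a Fredholm-alternative construction of the correctors with the required quantitative $O(\epsilon)$ bound in $H^2(\Omega)$.
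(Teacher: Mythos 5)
Your overall skeleton (far-field smallness $\Rightarrow$ smallness of the Cauchy data of the scattered field on $\partial\Omega$ $\Rightarrow$ view $(v_{g_0,k}+u^s, v_{g_0,k})$ as an approximate transmission pair $\Rightarrow$ correct it via Fredholm theory at the eigenvalue) parallels the paper, but two steps as written do not hold. First, the passage from $\|F_kg_0\|_{L^2(\mathbb{S}^{d-1})}\leq\epsilon$ to an $O(\epsilon)$ bound on $\|u^s\|_{H^{3/2}(\partial\Omega)}+\|\partial_\nu u^s\|_{H^{1/2}(\partial\Omega)}$ is not available: the far-field-to-near-field continuation is severely ill-posed and there is no ``standard stability estimate'' giving a linear rate. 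The paper invokes the quantitative Rellich theorem of \cite{Blasten16}, which yields only a double-logarithmic modulus $\psi(\epsilon)\to 0$ (this is exactly \eqref{eq:ssn2}), and consequently the final conclusion is $\|v_{g_0,k}-v_k\|_{H^2(\Omega)}\leq C\psi(\epsilon)$, not $O(\epsilon)$. Your repeated $O(\epsilon)$ claims overstate what can be proved; the qualitative conclusion survives, but the rate does not.

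Second, and more seriously, the heart of the argument --- solvability of the inhomogeneous transmission system at the eigenvalue $k$ --- is left as a hope: you write that ``the optimality of $g_0$ \emph{should} ensure'' that the small Cauchy data is ``essentially orthogonal'' to the obstruction space. That is not a proof, and the mechanism is in fact different: no optimality of $g_0$ is needed. The paper extends the Cauchy data of $u^s_{g_0,k}$ into $\Omega$ by a function $\zeta\in H^2(\Omega)$, recasts the perturbed system as $(A-\lambda I_{\mathbf{n}})(W_{g_0,k},V_{g_0,k})^T=\Upsilon$ in the Sylvester upper-triangular-compact framework \cite{Luc13,Sylvester12} (this is where $|\mathbf{n}^2-1|\geq\delta_0$ enters), and then verifies the compatibility condition $\Upsilon\in\mathrm{Ker}[(A-\lambda I_{\mathbf{n}})^*]^{\perp}$ \emph{exactly}, by integrating $\Upsilon_1=-(\Delta+k^2\mathbf{n}^2)\zeta$ against the adjoint kernel: using the transmission structure \eqref{eq:an1}--\eqref{eq:an2} of $\mathrm{Ker}[(A-\lambda I_{\mathbf{n}})^*]$ and the relations $u^s_{g_0,k}=u-v_{g_0,k}$, $\partial_\nu u^s_{g_0,k}=\partial_\nu u-\partial_\nu v_{g_0,k}$ on $\partial\Omega$, the resulting boundary integrals cancel identically. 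Only then does the (upper triangular analytic) Fredholm alternative give a solution modulo the finite-dimensional eigenspace with norm controlled by $\psi(\epsilon)$, whence $v_{g_0,k}$ is $H^2$-close to an eigenfunction $v_k$. Without an actual proof of this orthogonality --- which your appeal to \eqref{eq:AlterOptimization} does not supply --- your corrector construction does not close; your side remark on recovering $w$ from $w_0$ via $k^2(1-\mathbf{n}^2)w=(\Delta+k^2)w_0$ also plays no role in resolving this solvability issue.
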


\begin{proof}
Consider the scattering system \eqref{eq:forwardpro}. We let $u^i=v_{g_0, k}$, $u^s_{g_0, k}$ and $u$ be respectively the incident, scattered and total wave fields. It is clear that one has
\begin{equation}\label{eq:mediuSys}
\left\{
\begin{array}{ll}
\Delta u+k^2\mathbf{n}^2(x)u=0 \quad &\text{in} \ \Omega, \medskip \\
\Delta v_{g_0,k}+k^2 v_{g_0,k}=0\quad &\text{in} \ \Omega, \medskip \\
u=v_{g_0,k}+u^s_{g_0,k} \quad &\text{on} \ \partial \Omega, \medskip \\
\displaystyle \frac{\partial u}{\partial \nu} =\frac{\partial v_{g_0,k}}{\partial \nu}+\frac{\partial u^s_{g_0,k}}{\partial \nu} \quad &\text{on} \ \partial \Omega.
\end{array}
\right.
\end{equation}

According to our earlier discussion, $F_kg_0$ is the far-field pattern of $u_{g_0, k}^s$. By virtue of \eqref{eq:ssn1} as well as the quantitative Rellich theorem established in \cite{Blasten16}, one has
\begin{equation}\label{eq:ssn2}
\| u^s_{g_0,k} \|_{H^{3/2}(\partial \Omega)}+\left\| \frac{\partial u^s_{g_0,k}}{\partial \nu} \right\|_{H^{1/2}(\partial \Omega)}\leq \psi (\epsilon),
\end{equation}
where $\psi$ is the stability function in \cite{Blasten16}, which is of double logarithmic type and satisfies $\psi (\epsilon)\to 0$ as $\epsilon\to +0$.

Consider the transmission eigenvalue problem \eqref{eq:trans1}. Setting $W=w-v$, $V=k^2 v$ and $\lambda = -k^2$, \eqref{eq:trans1} can be rewritten as (cf. \cite{Luc13, Sylvester12}):
\begin{equation*}
\left\{
\begin{array}{ll}
(\Delta -\lambda \mathbf{n}^2(x))W +(\mathbf{n}^2(x)-1)V=0 \quad &\text{in} \ \Omega, \medskip \\
(\Delta -\lambda)V=0 \quad &\text{in} \ \Omega, \medskip \\
W=0 \quad &\text{on} \ \partial \Omega, \medskip \\
\displaystyle \frac{\partial W}{\partial \nu}=0 \quad &\text{on} \ \partial \Omega.
\end{array}
\right.
\end{equation*}
Let $\Delta_{00}$ denote the Laplacian with domain $H_0^2(\Omega)$ and $\Delta_{--}$ denote the Laplacian with domain $H^2(\Omega)$. By \cite{Luc13, Sylvester12}, the squares of interior transmission eigenvalues are the spectrum of the generalized eigenvalue problem
\begin{equation}\label{eq:EigenPro}
(A-\lambda I_{\mathbf{n}})\begin{pmatrix} W\\ V \end{pmatrix} :=\left(
\begin{matrix}
\Delta_{00}  & \mathbf{n}^2-1 \\
0 &  \Delta_{--}
\end{matrix}
   \right)\begin{pmatrix} W\\ V \end{pmatrix}-\lambda
\left(
\begin{matrix}
\mathbf{n}^2 & 0 \\
0 & 1
\end{matrix}
   \right)\begin{pmatrix} W\\ V \end{pmatrix}=0,
\end{equation}
where $(W, V)\in H_0^2(\Omega)\oplus H^2(\Omega)$.

Now, we consider the PDE system \eqref{eq:mediuSys}. By the standard Sobolev extension as well as noting \eqref{eq:ssn2}, we let $\zeta\in H^2(\Omega)$ be such that
\begin{equation}\label{eq:ext1}
\zeta=u_{g_0, k}^s,\quad \frac{\partial\zeta}{\partial\nu}=\frac{\partial u_{g_0, k}^s}{\partial\nu}\ \ \mbox{on}\ \ \partial\Omega,
\end{equation}
and
\begin{equation}\label{eq:ext2}
\|\zeta\|_{H^2(\Omega)}\leq C\psi(\epsilon),
\end{equation}
where $C$ is a generic constant depending on $\Omega$. Introducing
\begin{equation}\label{eq:auxn1}
\Upsilon_1=-(\Delta+k^2\mathbf{n}^2)\zeta,\quad \Upsilon_2=0,\quad \Upsilon=(\Upsilon_1, \Upsilon_2)^T\in L^2(\Omega)\oplus L^2(\Omega),
\end{equation}
and setting $\widetilde u:=u-\zeta\in H^2(\Omega)$, \eqref{eq:mediuSys} can be rewritten as
\begin{equation}\label{eq:auxn2}
\left\{
\begin{array}{ll}
\Delta \widetilde{u}+k^2\mathbf{n}^2(x) \widetilde{u}=\Upsilon_1 \quad &\text{in} \ \Omega, \medskip \\
\Delta v_{g_0,k}+k^2 v_{g_0,k}=\Upsilon_2\quad &\text{in} \ \Omega, \medskip \\
\widetilde{u}=v_{g_0,k} \quad &\text{on} \ \partial \Omega, \medskip \\
\displaystyle \frac{\partial \widetilde{u}}{\partial \nu} =\frac{\partial v_{g_0,k}}{\partial \nu} \quad &\text{on} \ \partial \Omega.
\end{array}
\right.
\end{equation}
Setting $W_{g_0,k}=\widetilde{u}-v_{g_0,k}$ and $V_{g_0,k}=k^2 v_{g_0,k}$, we can rewrite the system \eqref{eq:auxn2} into the operator form
\begin{equation}\label{eq:on1}
(A-\lambda I_{\mathbf{n}})\begin{pmatrix} W_{g_0, k}\\ V_{g_0, k}  \end{pmatrix}=\Upsilon, \ \ W_{g_0, k}\in H_0^2(\Omega), \ V_{g_0, k}\in H^2(\Omega).
\end{equation}

Note that $\lambda=-k^2$ is an eigenvalue to the operator equation \eqref{eq:EigenPro}. It is shown in \cite{Sylvester12} that $I_{\mathbf{n}}^{-1} A$ possesses a UTC (upper triangular compact) resolvent. This enables one to apply the upper triangular analytic Fredholm theorem to the eigenvalue problem \eqref{eq:EigenPro} as well as the operator equation \eqref{eq:on1}, which enjoys the same properties as those within the analytic Fredholm theorem (in the current setup of our study). Next, we shall prove that the operator equation \eqref{eq:on1} solvable in the
quotient space $(H_0^2(\Omega)\oplus H^2(\Omega))/(\mathbb{W}\oplus\mathbb{V})$, where $\mathbb{W}\oplus\mathbb{V}$ is the finite-dimensional eigen-space to \eqref{eq:EigenPro}. In order to apply the Fredholm theorem, it is sufficient for us to show that $\Upsilon\in \mathrm{Ker}[(A-\lambda I_{\mathbf{n}})^*]^\perp$. The kernel of $(A-\lambda I_{\mathbf{n}})^*$ consists of functions $(\xi, \eta)\in H^2(\Omega)\oplus H_0^2(\Omega)$ satisfying
\begin{equation}\label{eq:an1}
\begin{cases}
(\Delta-\lambda) \eta+(\mathbf{n}^2-1) \xi = 0\ \ &\mbox{in}\ \ \Omega,\medskip\\
(\Delta-\lambda) \xi = \lambda(\mathbf{n}^2-1)\xi\ \ &\mbox{in}\ \ \Omega,
\end{cases}
\end{equation}
which, by introducing $\widetilde{\eta}=\xi+\lambda \eta\in H^2(\Omega)$, is equivalent to the following PDE system
\begin{equation}\label{eq:an2}
\begin{cases}
(\Delta-\lambda)\widetilde\eta=0\ \ &\mbox{in}\ \ \Omega,\medskip\\
(\Delta-\lambda)\xi=\lambda(\mathbf{n}^2-1)\xi\ \ &\mbox{in}\ \ \Omega,\medskip\\
\widetilde\eta=\xi,\ \ \partial_\nu\widetilde\eta=\partial_\nu \xi\ \ &\mbox{on}\ \ \partial\Omega.
\end{cases}
\end{equation}
With the above fact and using \eqref{eq:auxn1}, we have
\begin{align}
\Upsilon\cdot (\xi, \eta)^T=&\int_\Omega\Upsilon_1\cdot \xi \nonumber\\
=& \int_\Omega -(\Delta+k^2\mathbf{n}^2) \zeta\cdot \xi=\int_{\partial\Omega} \partial_\nu\xi\cdot \zeta-\xi\cdot\partial_\nu\zeta\label{eq:ar1}\\
=& \int_{\partial\Omega} \partial_\nu\xi\cdot u_{g_0, k}^s-\xi\cdot\partial_\nu u_{g_0, k}^s,\label{eq:ar2}
\end{align}
where in \eqref{eq:ar1} we have made use of the fact $(\Delta+k^2\mathbf{n}^2)\xi=0$ from \eqref{eq:an1}; and in \eqref{eq:ar2} we have made use of the fact in \eqref{eq:ext1}.
From \eqref{eq:mediuSys}, we see that
\begin{equation*}
u_{g_0,k}^s=u-v_{g_0, k},\ \partial_\nu u_{g_0, k}^s=\partial_\nu u-\partial_\nu v_{g_0, k} \quad \mbox{on } \partial\Omega,
\end{equation*}
which readily yields that
\begin{align}
& \int_{\partial\Omega} \partial_\nu\xi\cdot u_{g_0, k}^s-\xi\cdot\partial_\nu u_{g_0, k}^s\nonumber\\
=&\bigg( \int_{\partial\Omega} \partial_\nu\xi\cdot u-\xi\cdot\partial_\nu u\bigg)-\bigg(\int_{\partial\Omega} \partial_\nu\xi\cdot v_{g_0, k}-\xi\cdot\partial_\nu v_{g_0, k}\bigg)\label{eq:ar4}\\
=& \int_{\partial\Omega} \widetilde\eta\cdot\partial_\nu v_{g_0, k}-\partial_\nu\widetilde\eta\cdot v_{g_0, k}=0\label{eq:ar5},
\end{align}
where from \eqref{eq:ar4} to \eqref{eq:ar5}, we have made use of the transmission conditions on $\partial\Omega$ from \eqref{eq:an2}. This implies that
\begin{equation*}
\Upsilon\in \mathrm{Ker}[(A-\lambda I_{\mathbf{n}})^*]^\perp.
\end{equation*}
Hence, \eqref{eq:on1} is solvable in $(H_0^2(\Omega)\oplus H^2(\Omega))/(\mathbb{W}\oplus\mathbb{V})$.

Set
\begin{equation}\label{eq:fn2}
\begin{pmatrix} W^*_{g_0, k}\\ V^*_{g_0, k}  \end{pmatrix}=(A-\lambda I_{\mathbf{n}})^{-1} \Upsilon\quad \mbox{in}\ \ (H_0^2(\Omega)\oplus H^2(\Omega))/(\mathbb{W}\oplus\mathbb{V}).
\end{equation}
By \eqref{eq:ext2}, we obviously have
\begin{equation}\label{eq:ext3}
\|\Upsilon\|_{L^2(\Omega)^2}\leq C\psi(\epsilon),
\end{equation}
with $C$ a generic constant depending on $\mathbf{n}, k$ and $\Omega$. Finally, by combining \eqref{eq:fn2} and \eqref{eq:ext3}, one can show that
\begin{equation*}
\|V_{g_0, k}-V_k\|_{H^2(\Omega)}\leq C\psi(\epsilon)\rightarrow 0\ \ \mbox{as}\ \ \epsilon\rightarrow+0\ \ \mbox{for}\ \ V_k\in\mathbb{V},
\end{equation*}
which readily implies that $v_{g_0, k}$ is an approximation to a transmission eigenfunction $v_k$.

The proof is complete.

\end{proof}

\begin{rem}
It is remarked that in Theorem~\ref{th:IndeedAppro}, the $C^{1,1}$-regularity on $\partial\Omega$ is a technical condition. It is required in \eqref{eq:ssn2} and \eqref{eq:ext1}, and in particular according to \cite{Blasten16}, higher regularity might be required in deriving \eqref{eq:ssn2}, which we choose not to explore further since it is not the focus of this article. Nevertheless, we believe that this regularity assumption on $\partial\Omega$ can be relaxed. In fact, according to our numerical examples in what follows, even if $\partial\Omega$ is only Lipschitz continuous, one can still determine the (approximte) transmission eigenfunctions by solving the optimization problem \eqref{eq:AlterOptimization}.
\end{rem}

\begin{rem}
In the whole paper, we assume that $\mathbf{n}$ is real and $|\mathbf{n}^2-1|\geq \delta_0$. This assumption is mainly required for establishing the surface-localisation of the transmission eigenfunctions. In fact, most of the results presented in this section, say Theorems~\ref{th:Eigenf} and \ref{th:IndeedAppro}, can be extended to the more general case where $\mathbf{n}^2$ satisfies the more general assumptions in \cite{Sylvester12}.
\end{rem}

\subsection{Phase~$\mathrm{III}$: imaging of the scatterer}

In Phases~$\mathrm{I}$ and $\mathrm{II}$, using the far-field data in \eqref{eq:ip1}, we respectively determine the transmission eigenvalues within the interval $I$ and the corresponding transmission eigenfunctions. In this part, we shall show that the transmission eigenfunctions can be used for the qualitative imaging of the shape of the medium scatterer $(\Omega, \mathbf{n}^2)$, namely $\partial\Omega$ independent of $\mathbf{n}^2$. The basic idea can be described as follows. Let $v_{g_0,k}$ be determined from Phase~$\mathrm{II}$ which approximates a transmission eigenfunction within $\Omega$. Then according our study in Section~\ref{sect:2}, $v_k$ can be an SLE if the conditions in Section~\ref{sect:2} are fulfilled. In fact, we know from the numerical examples in Sections~\ref{sect:2.3} and \ref{sect:2.6}, that if $\mathbf{n}$ is sufficiently large in a neighbourhood of $\partial\Omega$, even for a relatively small transmission eigenvalues, the corresponding transmission eigenfunction is an SLE. Furthermore, in such a case, the SLEs occur very often in our extensive numerical experiments and a major part of the calculated transmission eigenfunctions are SLEs. This is a highly interesting phenomenon that is worth further theoretical investigation. Based on such an observation, it naturally leads to the following imaging functional for recovering $\Omega$:
\begin{equation*}
I_k^{\text{Res}}(z):= -\text{ln}| v_{g_0,k}(z) |.
\end{equation*}
If the underlying transmission eigenfunction $v_k$ is an SLE, then $v_{g_0, k}$ is referred to as an approximate SLE. It can be easily seen that if $v_{g_0,k}$ an approximate SLE, then $I_k^{\text{Res}}(z)$ possesses a relative large value if $z$ belongs to the interior of $\Omega$, or $z$ is located at the corner/edge/highly-curved place on $\partial \Omega$, whereas it possesses a relatively small value if $z$ is located in the other places around $\partial\Omega$. Since in Phase $\mathrm{II}$, multiple transmission eigenfunctions can be determined, we can superpose the imaging effects by introducing the following imaging functional:
\begin{equation}\label{eq:multiIndicator}
I_{\mathbb{K}_L}^{\text{Res}} (z):= -\text{ln} \sum\limits_{k\in{\mathbb{K}_L}}| v_{g_0,k}(z) |,
\end{equation}
where $\mathbb{K}_L=\{k_1,k_2,\cdots,k_L \}$ denotes the set of $L$ distinct transmission eigenvalues determined in Phase~$\mathrm{I}$. Based on the imaging functional \eqref{eq:multiIndicator}, we then propose the following imaging scheme, which is referred to as imaging by interior resonant modes. Indeed, the proposed scheme is based on using the interior transmission eigen-modes, which are usually ``discarded" or ``avoided" in many existing inverse scattering schemes, say e.g. the linear sampling method \cite{Colton00} and the factorization method \cite{KirGrin}.

\begin{table}[htbp]
\begin{tabular}{p{1.5cm} p{11cm}}
\toprule
\multicolumn{2}{l}{\textbf{Algorithm \uppercase\expandafter{\romannumeral2}:}\ Imaging by interior resonant modes}\\
\midrule
Step 1 & For each resonant wavenumber $k$ found in Algorithm I, solve the optimization problem \eqref{eq:AlterOptimization} by the FTLS method or the GTLS method \cite{LLWW} to obtain the Herglotz kernel $g_0$.\\
Step 2 & Calculate the Herglotz wave function $v_{g_0,k}$ with the Herglotz kernel $g_0$ by the definition \eqref{eq:Herglotz}.\\
Step 3 & Plot the indicator function \eqref{eq:multiIndicator} in a proper domain containing the scatterer $\Omega$ and identify the interior and corners (two dimension) or edges (three dimension) as bright points, and other boundary places as dark points in the graph to obtain the shape of the scatterer $\Omega$.\\
\bottomrule
\end{tabular}
\end{table}

Finally, we would like to emphasize that according to our discussion above, the proposed imaging scheme should work for imaging a medium scatterer whose refractive index is highly-contrast in a neighbourhood of its boundary. This clearly includes the case that the medium scatterer itself already possesses a high-contrast refractive index. On the other hand, for a regular refractive-indexed scatterer, one would need to first coat the scatterer by indirect means with a thin layer of highly refractive-indexed material, then our imaging scheme would work as well. Hence, it is unobjectionable to claim that the proposed method possesses a practical value for generic inverse scattering imaging. In what follows, we present several numerical examples to illustrate the effectiveness of the proposed imaging scheme. In order to simplify the situation, we only consider imaging a medium scatterer possessing a constant refractive index of a relatively high magnitude in two dimensions. It is remarked that the higher the refractive index is, the better imaging effect one can expect to achieve. Moreover, as emphasized above, this highly refractive-indexed medium can be located only in a neighbourhood of $\partial\Omega$.


\subsection{Numerical examples}
In this section, we present the numerical experiments as mentioned above. To avoid the inverse crime, we use the finite element method to compute the scattering amplitudes $u^\infty(\hat{x}_i, d_j),\, i=1,2,\cdots,M_0, \, j=1,2,\cdots,N_0$, where $\hat{x}_i$, $i=1, 2, \ldots, M_0$ denote the discrete observation directions and $d_j$ $j=1,2,\ldots, N_0$ denote the discrete incident directions. In the two dimensions, the observation and incident directions are equidistantly distributed on a unit circle. Then we consider the collected far-field matrix $F\in \mathbb{C}^{M_0 \times N_0}$ such that
\begin{equation*}
  F=\left[
      \begin{array}{cccc}
        u^{\infty}(\hat x_1, d_1) & u^{\infty}(\hat x_1, d_2) & \cdots& u^{\infty}(\hat x_1, d_{N_0}) \\
        u^{\infty}(\hat x_2, d_1) &u^{\infty}(\hat x_2, d_2)  & \cdots & u^{\infty}(\hat x_2, d_{N_0})  \\
        \vdots & \vdots & \ddots & \vdots \\
        u^{\infty}(\hat x_{M_0}, d_1)  & u^{\infty}(\hat x_{M_0}, d_2)  & \cdots & u^{\infty}(\hat x_{M_0}, d_{N_0})  \\
      \end{array}
    \right].
\end{equation*}
In order to test the sensibility of the proposed method, we further perturb $F$ with random noise by setting
\begin{equation*}
F^{\delta}\ =\ F +\delta\|F\|\frac{R_1+R_2 \mathrm{i}}{\|R_1+R_2 \mathrm{i}\|},
\end{equation*}
where $\delta>0$ represents the noise level, $R_1$ and $R_2$ are two $M_0 \times N_0$ matrixes containing pseudo-random values drawn from a normal distribution with the mean being zero and the standard deviation being one.  In the following two examples,  the noise level is given by $\delta=1\%$.

\subsubsection{Square domain}

In the first example, we let $\Omega$ be a square with the side length being 2 and the refractive index being $\mathbf{n}=10$. The synthetic far-field data are computed at $100$ observation directions, $100$ incident directions and $3000$ wavenumbers within the interval $I=[0.6, 0.9]$, all equally distributed. Firstly, we use Algorithm \uppercase\expandafter{\romannumeral1} to determine four transmission eigenvalues, such as $k_1=0.6219, \, k_2=0.6896, \,k_3=0.7858$ and $k_4=0.8370$. Next, we can determine 4 approximate transmission eigenfunctions as well. Since $\mathbf{n}$ is relatively large, it turns out that the computed eigenfunctions are all approximate SLEs. We would like to emphasize again that we did not purposely design such a numerical example and indeed, as discussed earlier, the occurrence of the SLEs are very often. We present the reconstruction results in Fig. \ref{fig:super-square}, (a)--(c), by using 1, 2 and 4 SLEs, respectively. One readily sees that the square is already finely reconstructed with 4 SLEs. For comparisons, we also present the reconstruction results by using a sampling type method developed in several works \cite{N1,N2,LLWn1,LiuIP17} by using the multiple frequency scattering data in \eqref{eq:ip1}. The reconstruction results are presented in Fig. \ref{fig:super-square}, (d)--(f). It can be seen that the reconstructions basically yield a spot without any resolution of the square-shape object. In fact, one can also implement the other popular imaging schemes including the linear sampling method or the factorization method \cite{CK}, and the reconstruction effects shall remain almost the same. It is clear that the length of the square is much smaller than the underlying wavelength, $2\pi /k$. This result illustrates that super-resolution reconstruction can be realized by the proposed method. This is unobjectionably expected since we make use of the interior resonant modes for the reconstruction. Finally, it is pointed out if one further performs standard imaging processing to the reconstructed image in Fig~\ref{fig:super-square} (c), one should be able to obtain a nearly-accurate reconstruction of the square-object. However, this is not the focus of this article and we shall not explore further about this point.

\begin{figure}[h]
\centering
\subfigure[$L=1$]{\includegraphics[width=0.33\textwidth]{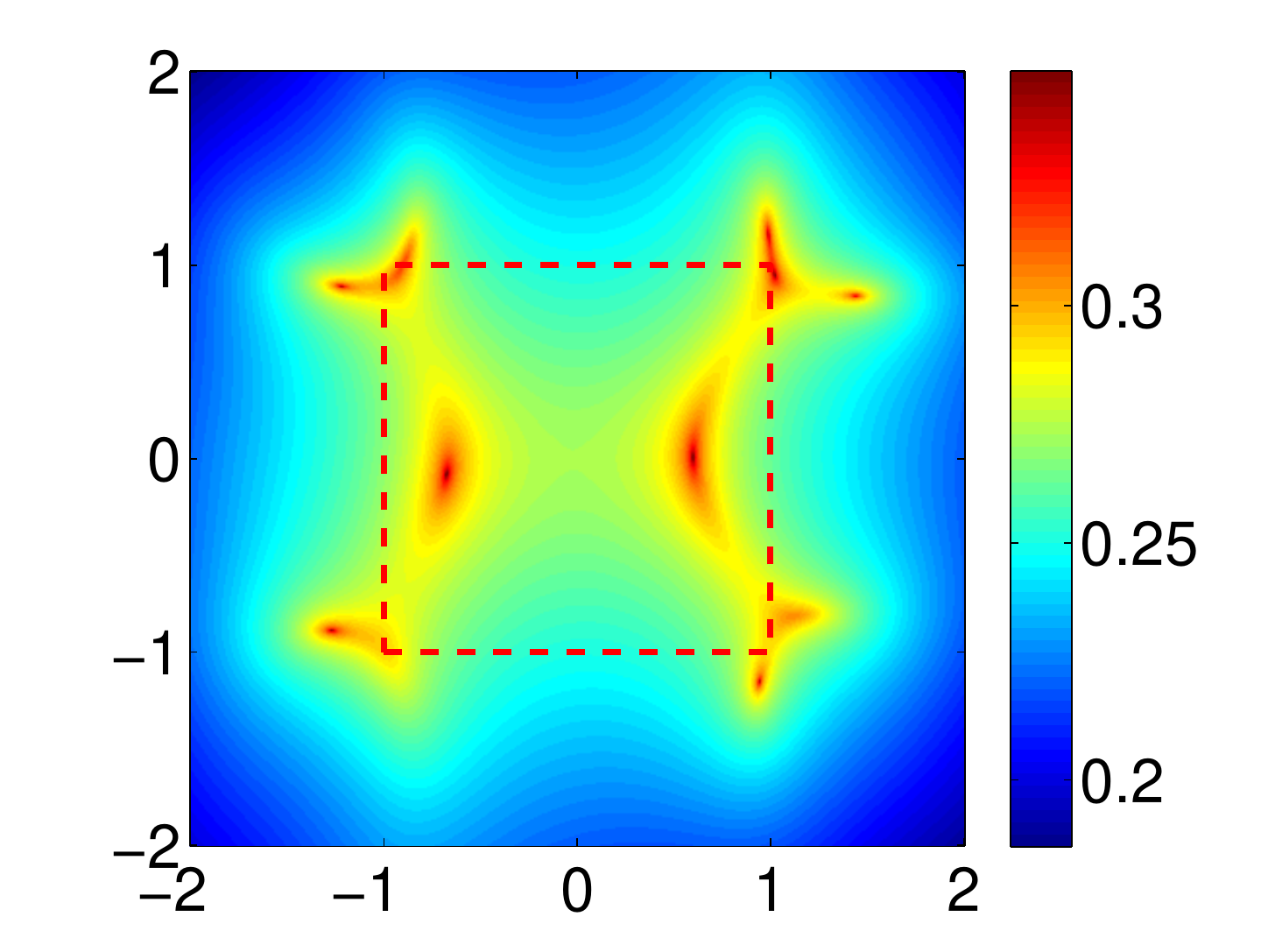}}
\subfigure[$L=2$]{\includegraphics[width=0.33\textwidth]{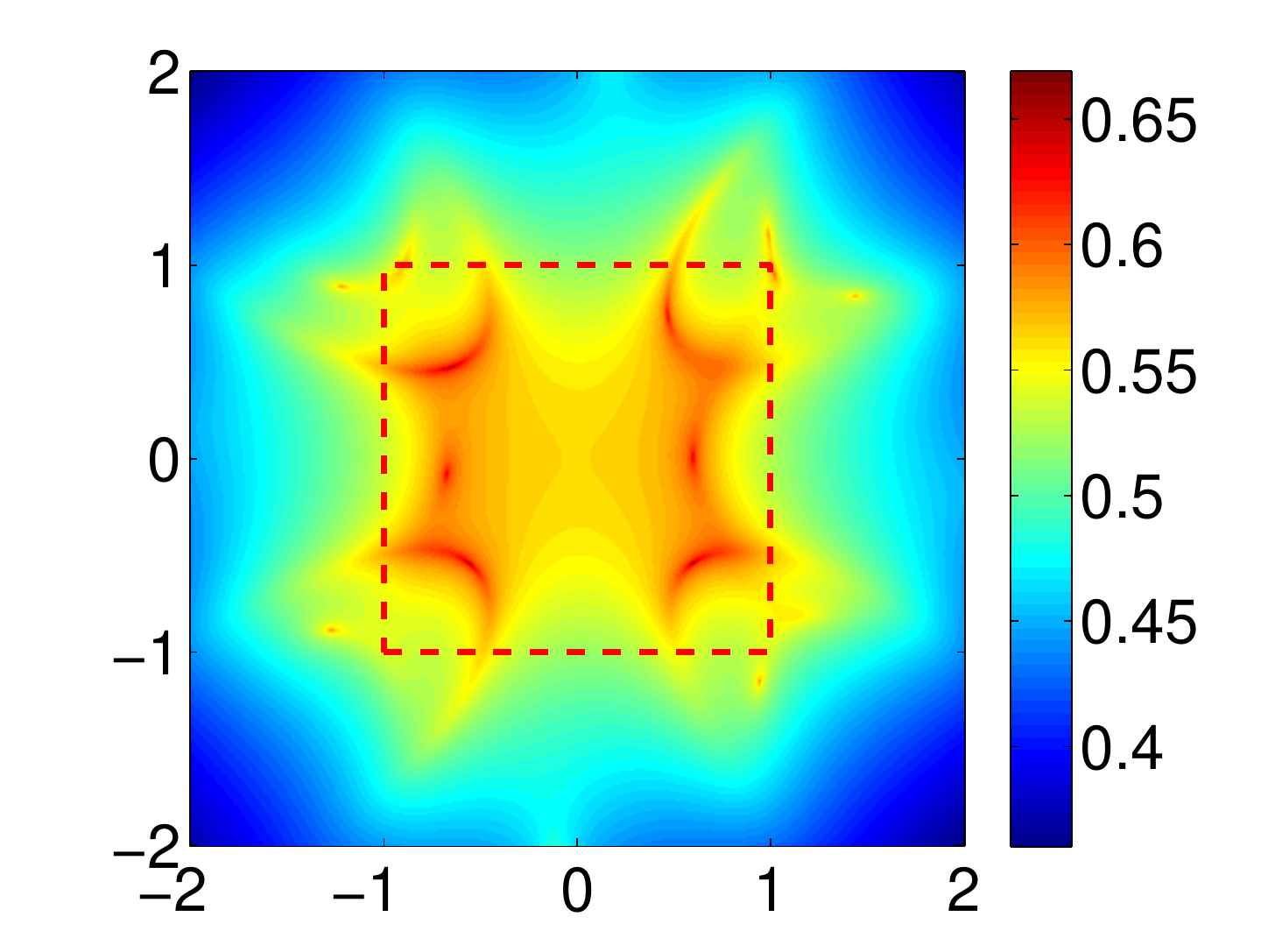}} 
\subfigure[$L=4$]{\includegraphics[width=0.33\textwidth]{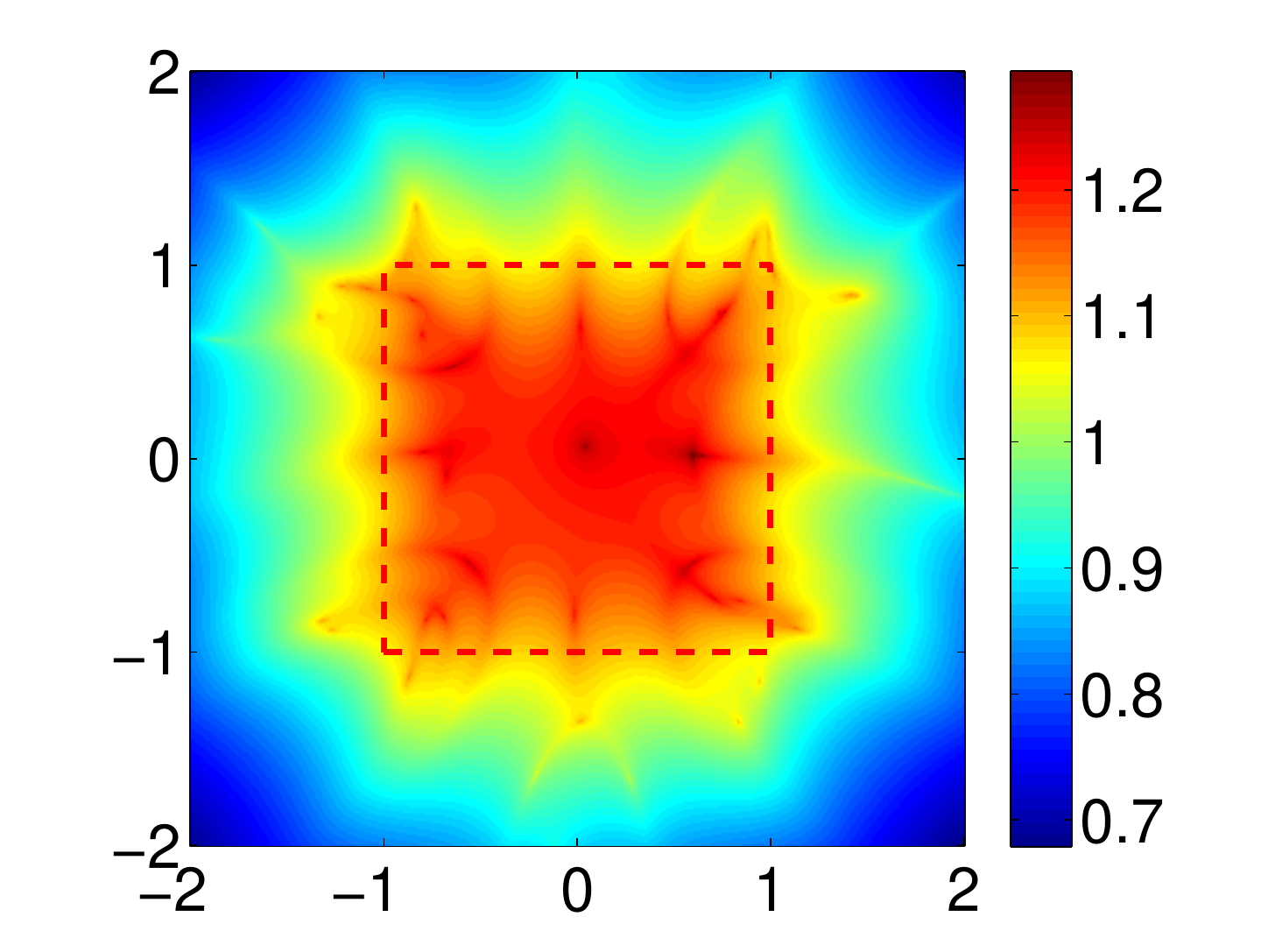}}\\ 
\subfigure[$L=1$]{\includegraphics[width=0.33\textwidth]{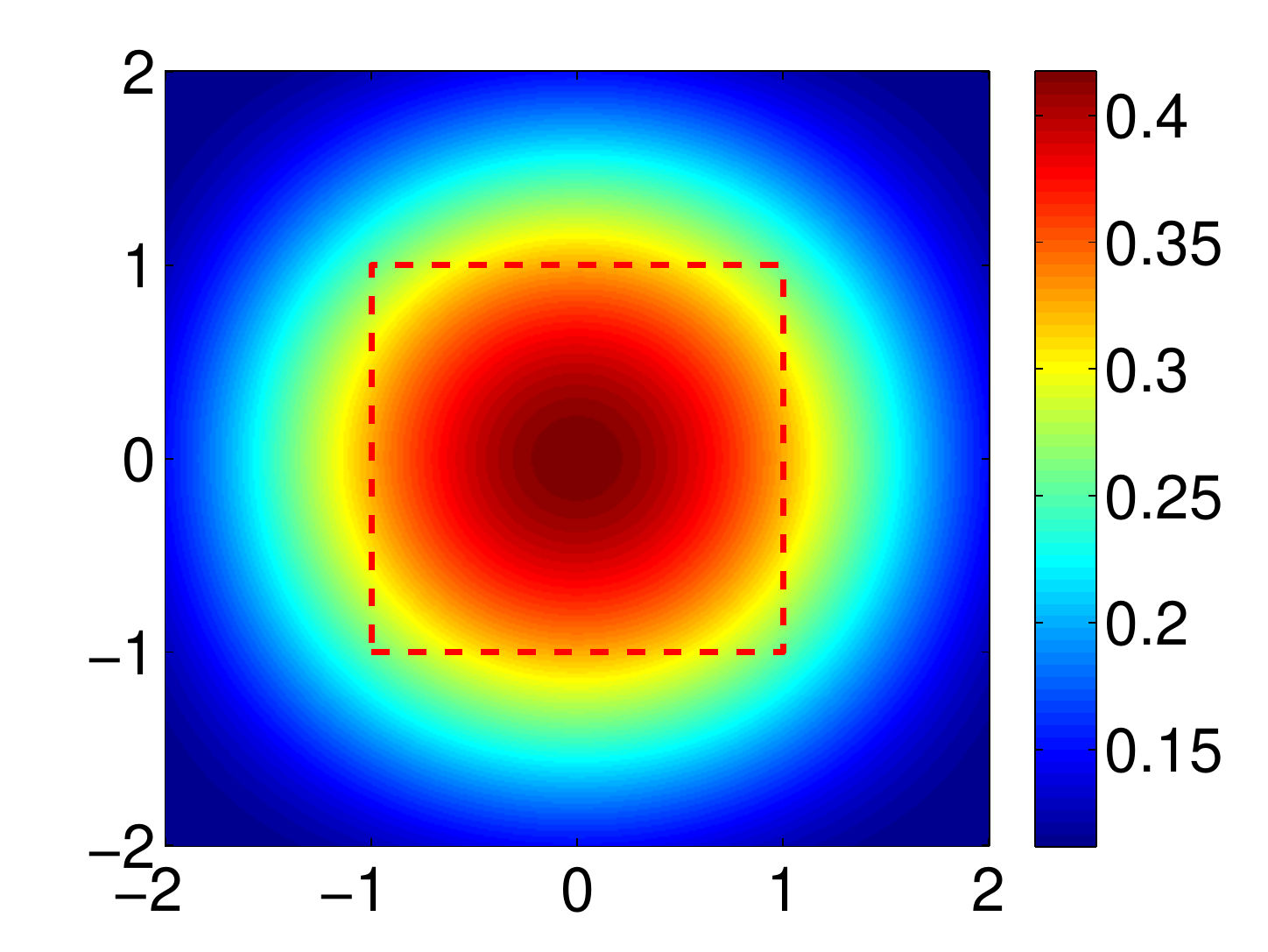}}
\subfigure[$L=2$]{\includegraphics[width=0.33\textwidth]{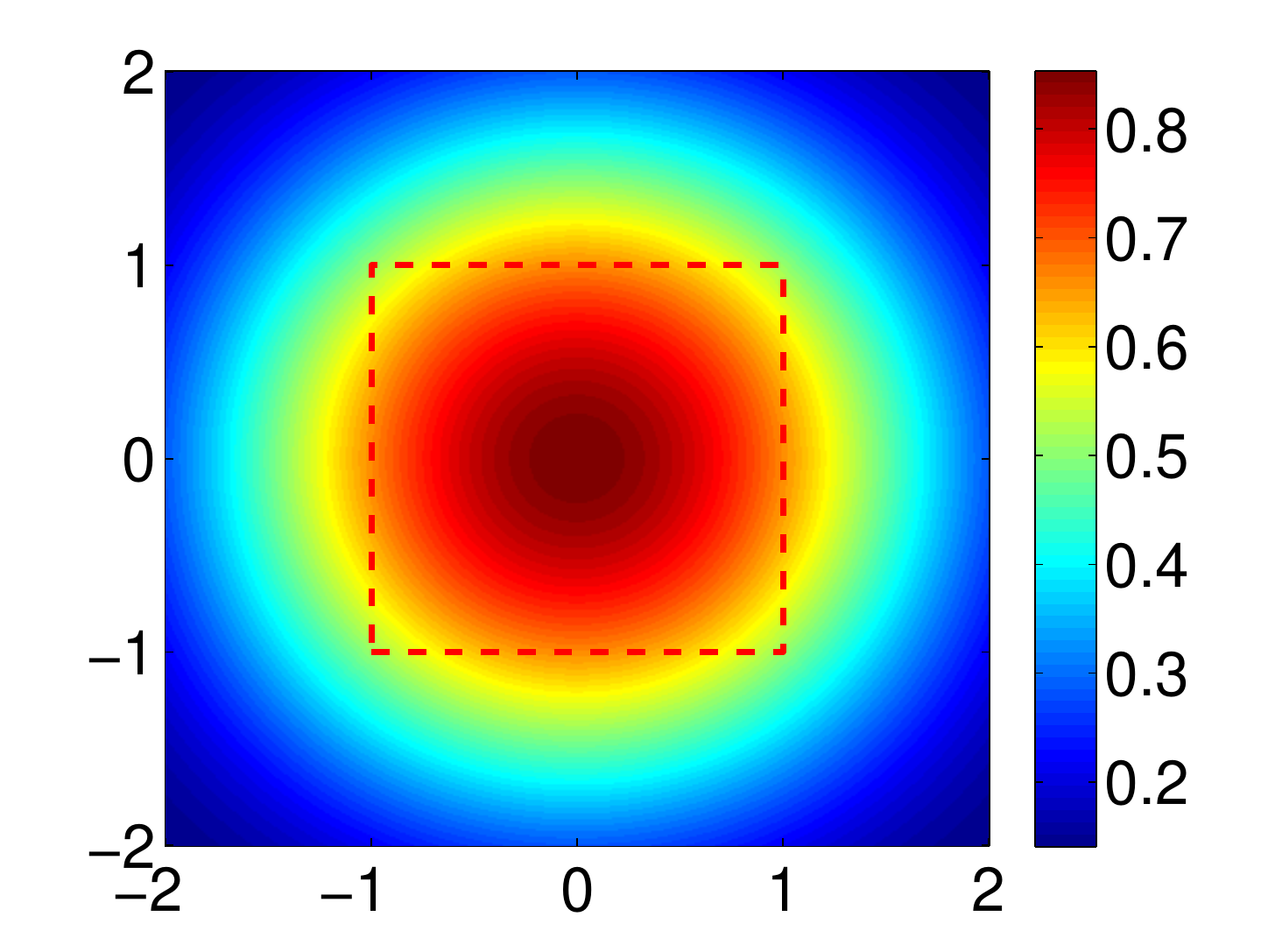}} 
\subfigure[$L=4$]{\includegraphics[width=0.33\textwidth]{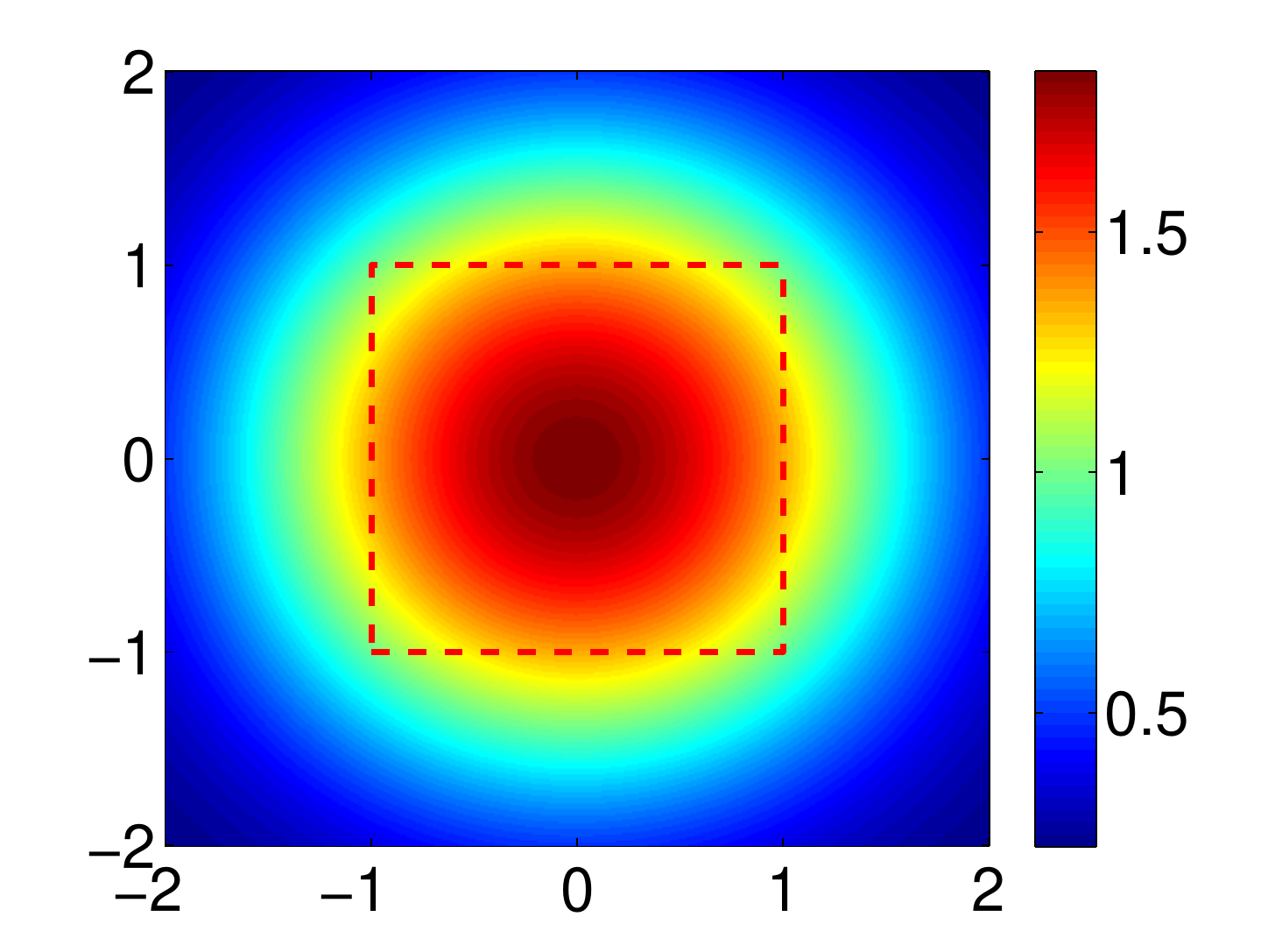}}\\ 
\caption{\label{fig:super-square} Reconstructions of a square-object by multiple SLEs (top row) and multiple-frequency direct sampling method (bottom row), respectively, where $\mathbf{n}=10$ for all cases. }
\end{figure}

\subsubsection{kite-shaped domain}
Fig.~\ref{fig:super-kite} presents another example where the target domain is a kite-shaped object with $\mathbf{n}=10$. The imaging frequencies are chosen within $[1, 3.1]$, and the computed transmission eigenvalues are $k_1=1.2387, \, k_2=1.4771, \,k_3=2.0513, \,k_4=3.0013$. The reconstructions by our proposed method are given by (a)--(c), meanwhile the reconstructions by the sampling-type method are given by (d)--(f). The sub-figures (g)--(i) present the combined results of the above two reconstructions. Clearly, Fig.~\ref{fig:super-kite}, (i) yields a very nice reconstruction of the kite-object, especially the concave part.
\begin{figure}[h]
\centering
\subfigure[$L=1$]{\includegraphics[width=0.33\textwidth]{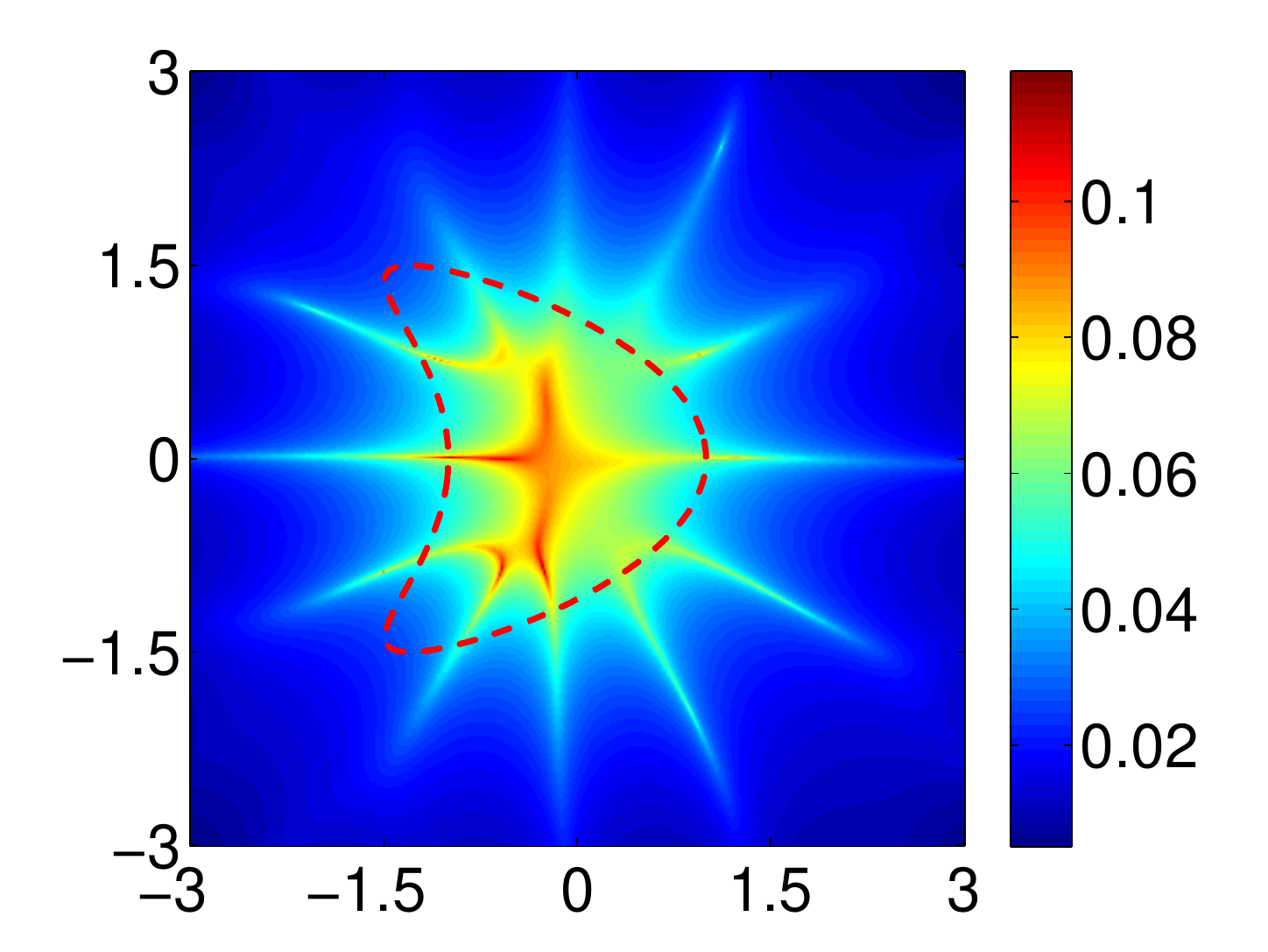}}
\subfigure[$L=2$]{\includegraphics[width=0.33\textwidth]{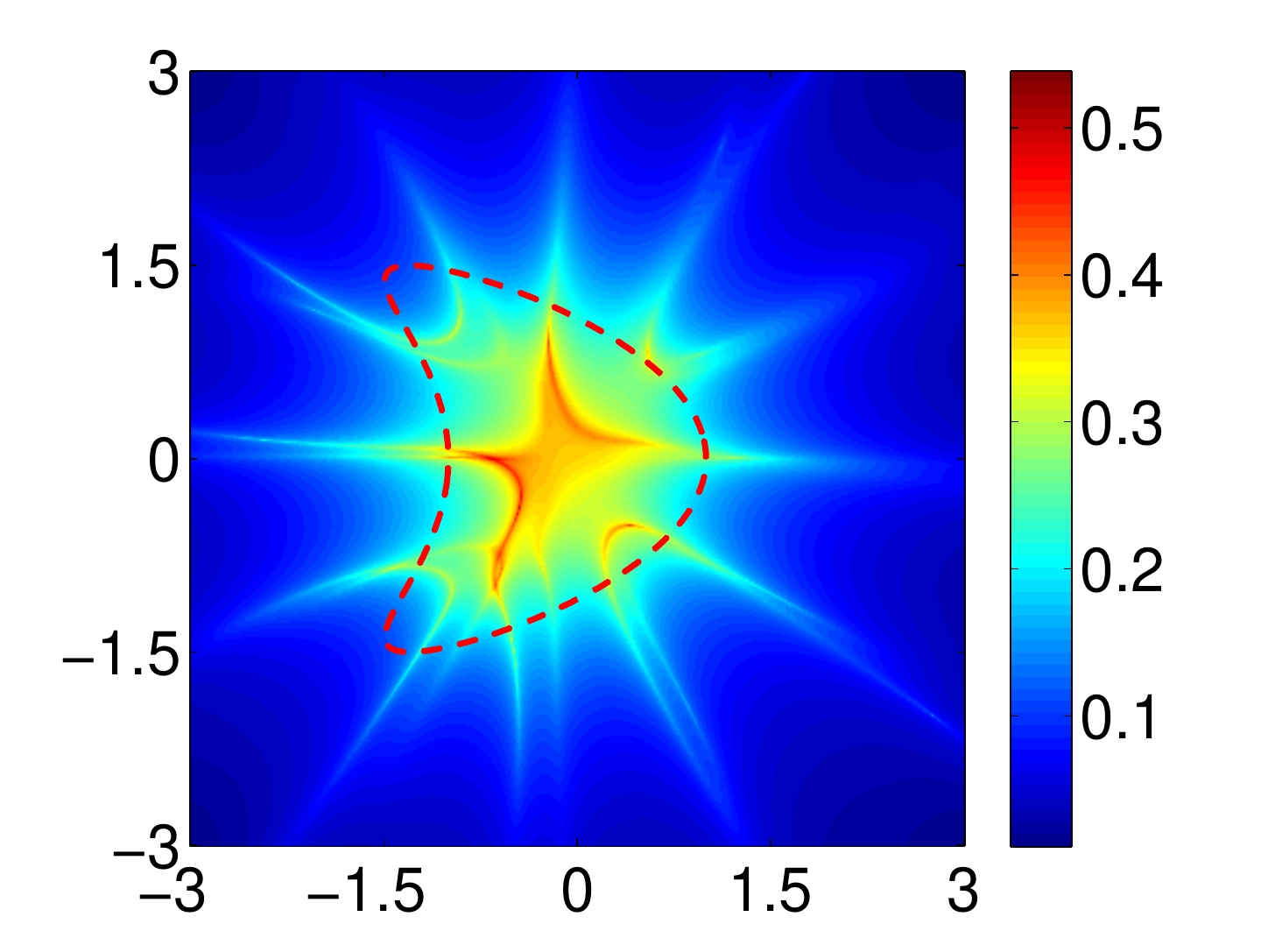}} 
\subfigure[$L=4$]{\includegraphics[width=0.33\textwidth]{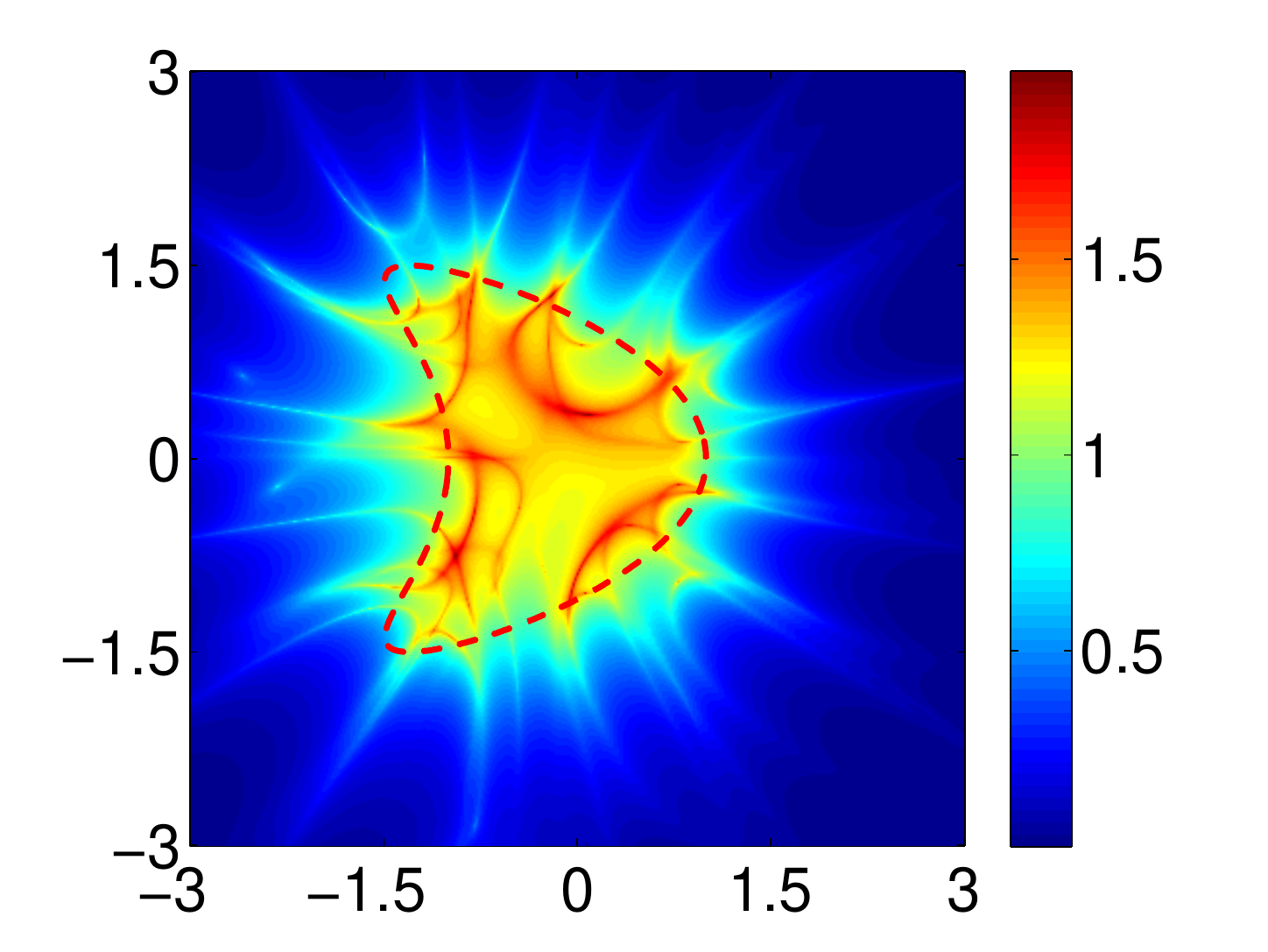}}\\ 
\subfigure[$L=1$]{\includegraphics[width=0.33\textwidth]{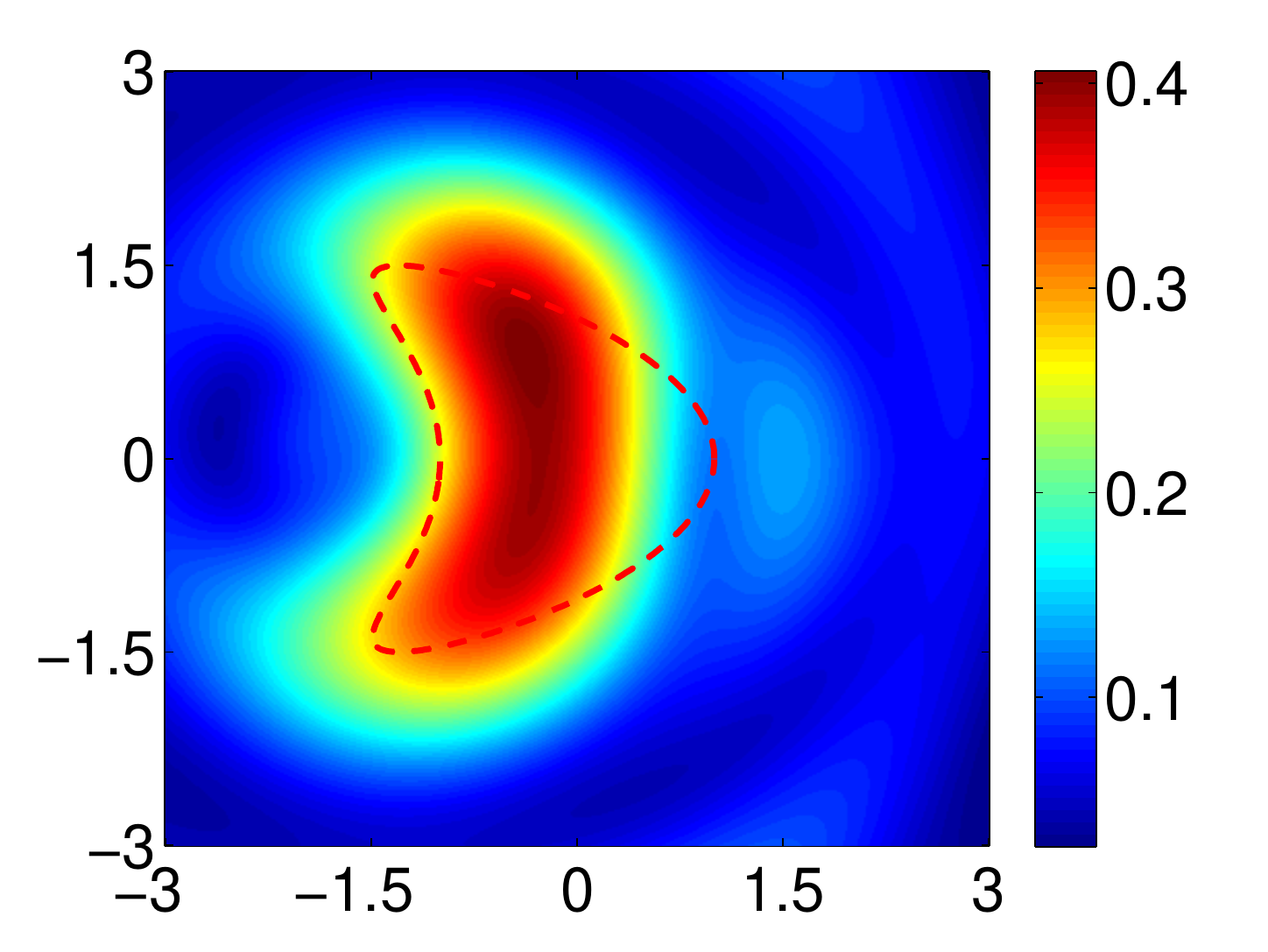}}
\subfigure[$L=2$]{\includegraphics[width=0.33\textwidth]{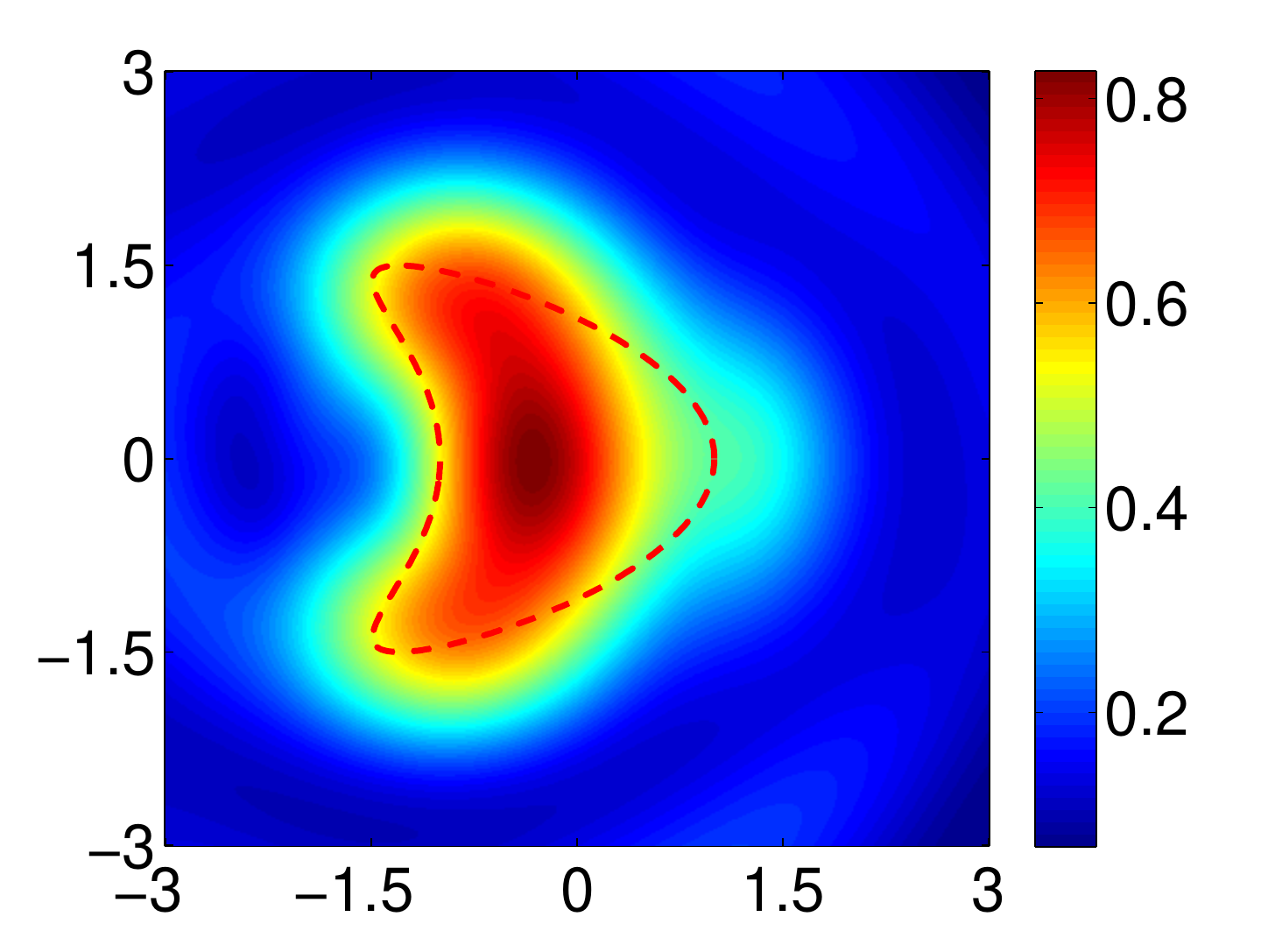}} 
\subfigure[$L=4$]{\includegraphics[width=0.33\textwidth]{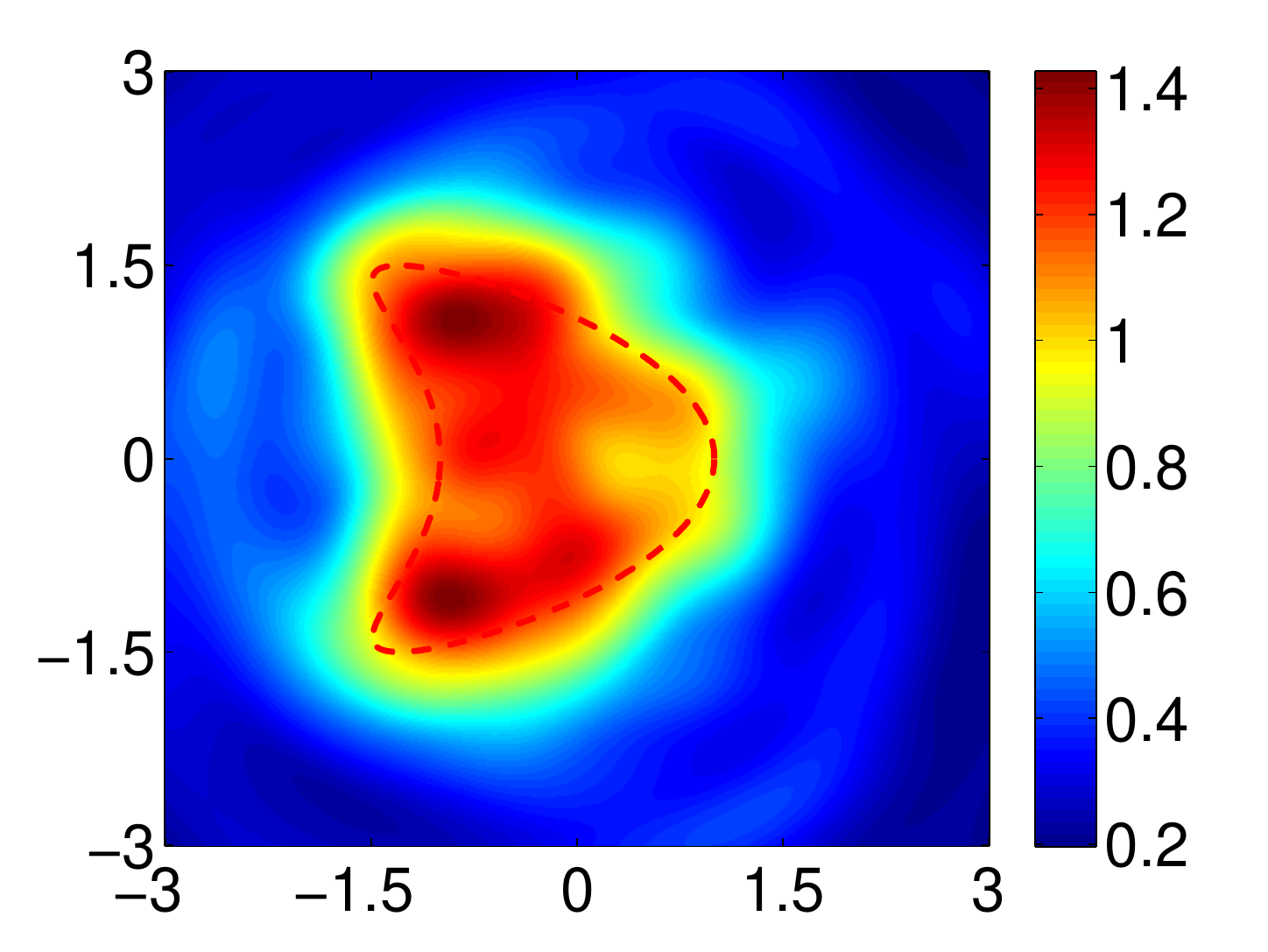}}\\ 
\subfigure[$L=1$]{\includegraphics[width=0.33\textwidth]{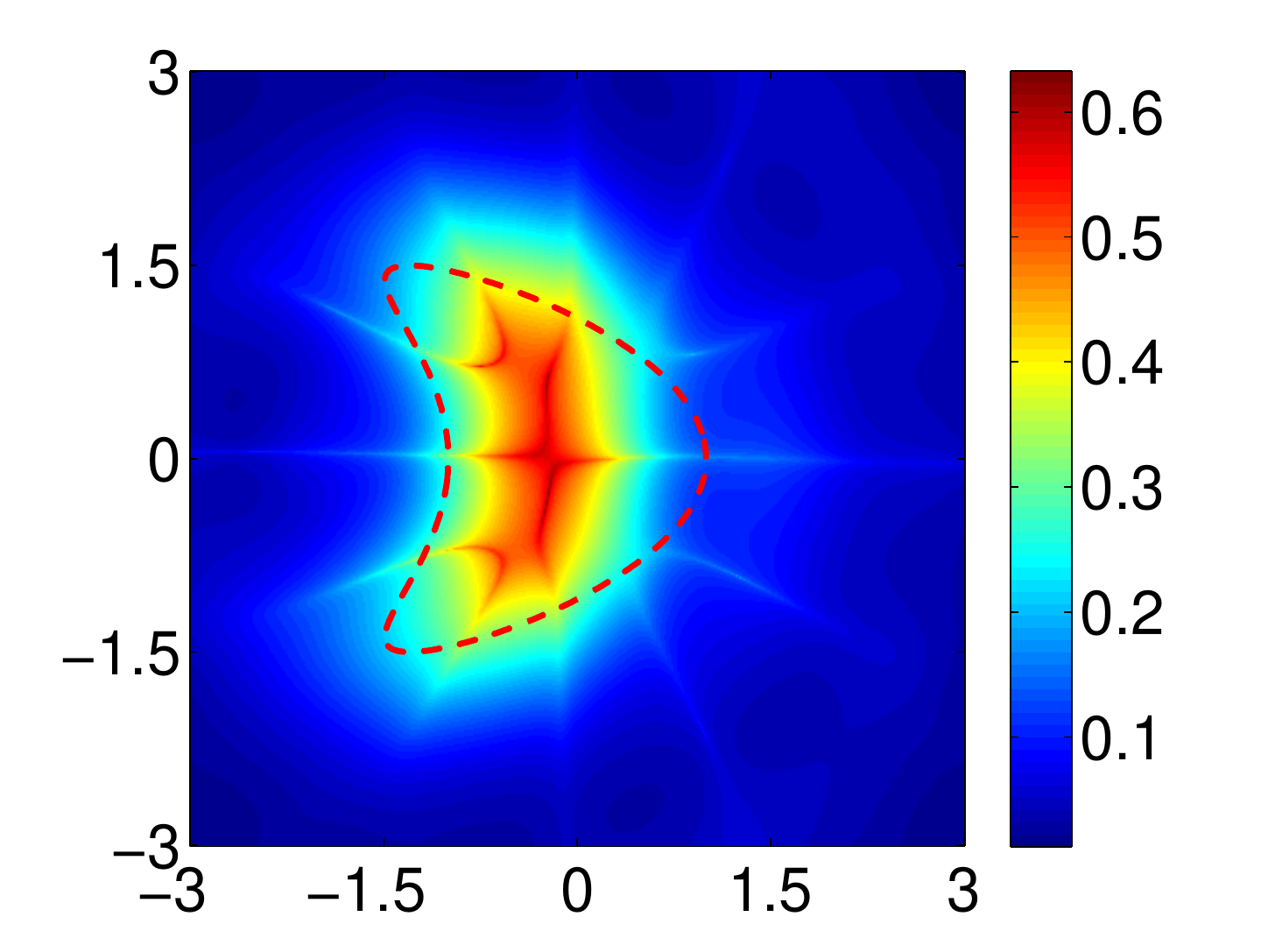}}
\subfigure[$L=2$]{\includegraphics[width=0.33\textwidth]{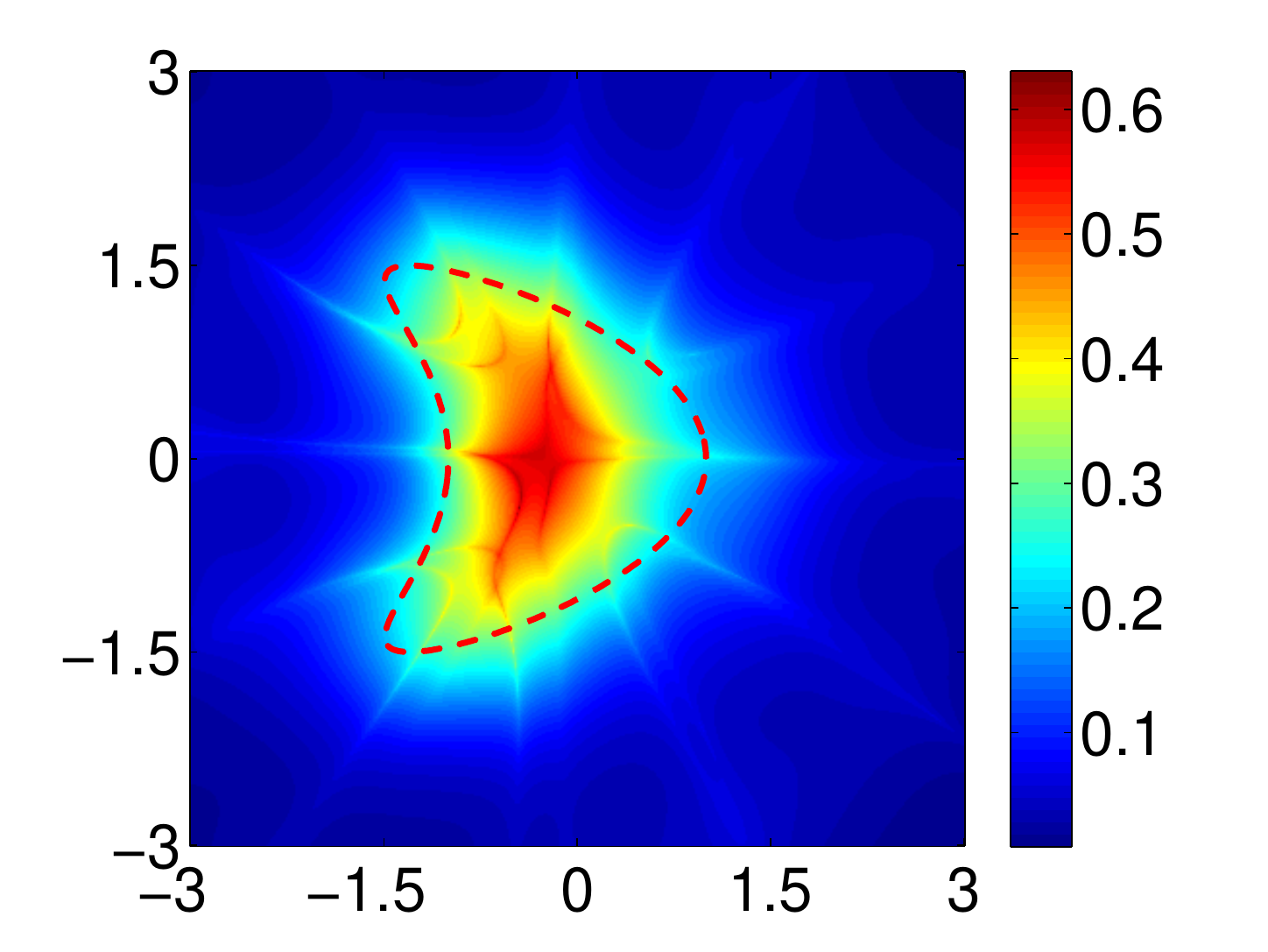}} 
\subfigure[$L=4$]{\includegraphics[width=0.33\textwidth]{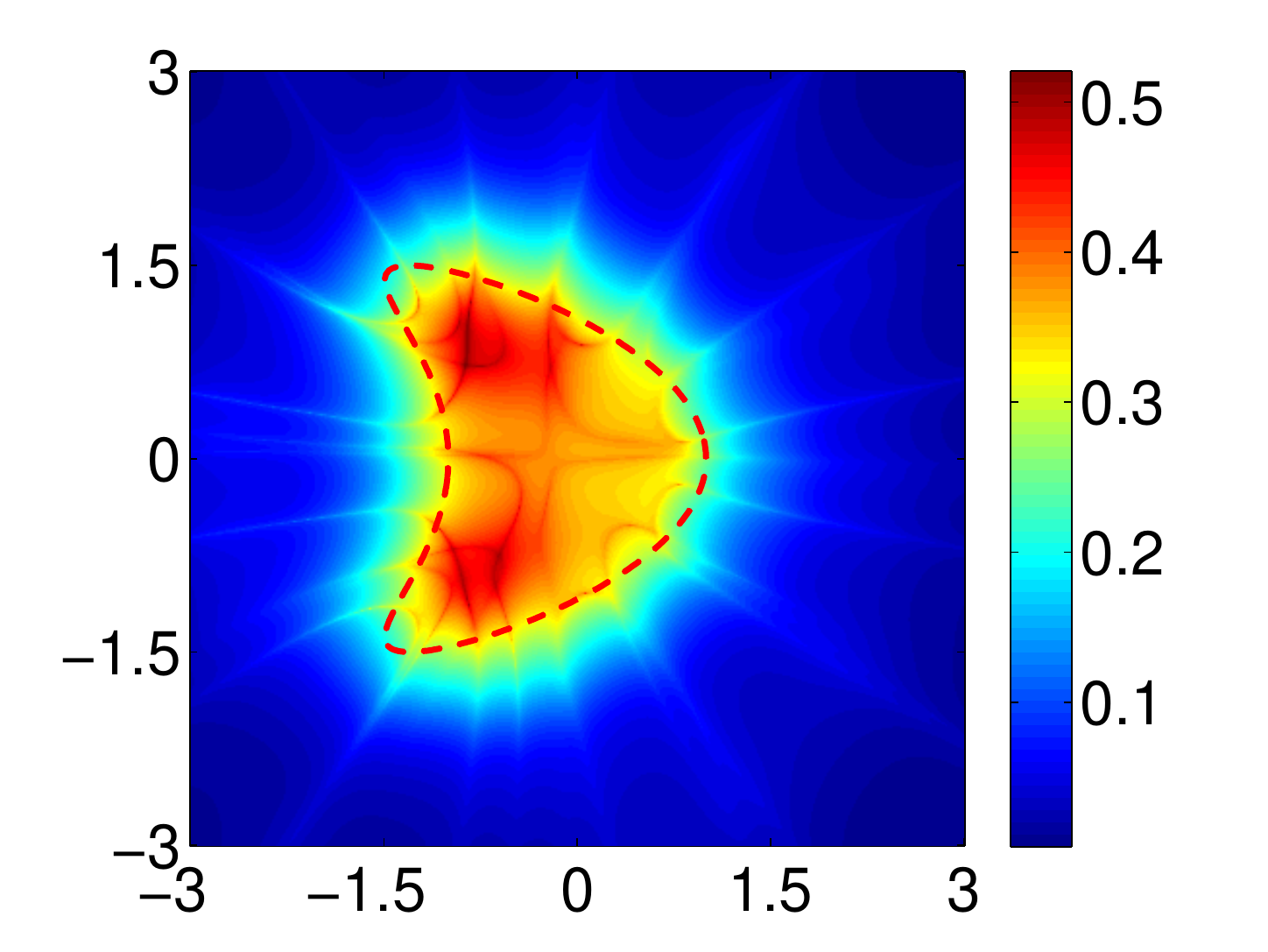}}\\ 
\caption{\label{fig:super-kite} Reconstructions of a kite-object by multiple SLEs (top row) and multiple-frequency direct sampling method (middle row), respectively, where $\mathbf{n}=10$ for all cases. Bottom row gives the corresponding reconstructions of combining the above two reconstruction means.}
\end{figure}

Three remarks are in order. First, it can seen from the reconstructions in Fig.~\ref{fig:super-kite} that the sampling-type method tends to reconstruct a ``larger" object while our proposed method tends to reconstruct a ``smaller" object. This is physically reasonable since the sampling-type method as well as the other traditional inverse scattering schemes make use the measurement data away from the scatterer for the reconstruction, which amounts to ``seeing" the scattering object from its outside, whereas our method makes use of the interior resonant modes, which amounts to ``seeing" the scattering object its inside. Hence, hybridizing the two types of methods can yield a better reconstruction. Second, it is arguable that the super-resolution effect comes from the high-contrast medium parameter $\mathbf{n}$ in this specific example (cf. \cite{Amm0}). Indeed, as discussed earlier, a high-contrast $\mathbf{n}$ leads to a relatively small $k$ that can induce the desired SLE for the reconstruction, which is a matter of fact. However, in practice, for a regular refractive inhomogeneity, one may first coat the object via indirect means with a thin layer of high-contrast medium (cf. Fig.~\ref{fig:medium_layer}), then apply the same reconstruction procedure as above. According to the results in Fig.~\ref{fig:medium_layer}, one would have the same super-resolution reconstructions as in Fig.~\ref{fig:super-square}, (a)--(c). Third, the super-resolution is achieved at the cost of a large amount of computations and a restrictive requirement on the high-precision of the measurement data. This is unobjectionable due to the increasing capabilities of physical apparatus nowadays.

\section{Pseudo surface plasmon resonances and potential applications}

Surface plasmon resonance (SPR) is the resonant oscillation of conducting electrons at the interface between negative and positive permittivity materials stimulated by incident light. It is a non-radiative electromagnetic surface wave that propagates in a direction parallel to the negative permittivity/dielectric material interface \cite{BS, FM, Kli, OI, Z}. Clearly, the SPR wave is a surface-localized mode. It is in this sense that the SLE can be viewed as a certain SPR. Indeed, viewed from the inside of $\Omega$ (this is unobjectionable since $v$ is only supported in $\Omega$), the behaviour of a SLE is very much like a SPR. However, SPR usually occurs in the quasi-static regime (subwavelength scale), whereas SLE can occur in both the quasi-static regime and the high-frequency regime. Moreover, the SPR is usually generated from direct light incidence, whereas the generation of SLEs is rather indirect according to our earlier study. As is known that the SPR can have many industrial and engineering applications including color-based biosensors, different lab-on-a-chip sensors and diatom photosynthesis \cite{Kli}. In what follows, we show that the SLEs can also be generated through direct wave incidences. This will pave the way for the proposal of an interesting SLE sensing that is similar to the SPR sensing.

First, we recall that assuming $\mathbb{R}^d\backslash\overline{\Omega}$ connected, the Herglotz waves of the form \eqref{eq:Herglotz} are dense in the space $\{v\in H^1(\Omega); (\Delta+k^2) v=0\ \mbox{in}\ \Omega\}$. Hence, for any transmission eigenfunction $v$ to \eqref{eq:trans1}, there exists $g\in L^2(\mathbb{S}^{d-1})$ such that $v_g^k\approx v$ in $H^1(\Omega)$. Next, for a refractive inhomogeneity $\mathbf{n}^2$, $0<\mathbf{n}<1$, supported in $\Omega$ with $\mathbb{R}^d\backslash\overline{\Omega}$ connected, we let $k_0$ be an eigenvalue to \eqref{eq:trans1} with the eigenfunctions denoted as $(w_\Omega, v_\Omega)$ such that $w_\Omega$ is an SLE. Let $v_g^{k_{0}}$ be a Herglotz wave function of the form \eqref{eq:Herglotz} such that ${v}_g^{k_{0}}\approx v_\Omega$ in $H^1(\Omega)$. Now, we consider the scattering problem \eqref{eq:forwardpro} with the incident field $u^i={v}_g^{k_{0}}$. It is straightforward to show that if $u_\infty(\hat x, v_{g}^{k_{0}})\equiv 0$ (equivalent to $u^s(x, v_{g}^{k_{0}})=0$ in $\mathbb{R}^d\backslash\overline{\Omega}$ by Rellich's Theorem \cite{CK}), one then has the transmission eigenvalue problem \eqref{eq:trans1} with $k=k_0$, $w_\Omega=u|_{\Omega}$ and $v=u^i|_{\Omega}$, where $u$ is the total field to \eqref{eq:forwardpro}. Conversely, noting that $u^i\approx v_\Omega$ from our earlier construction, one can show (cf. \cite{BL1}) that $u^\infty\approx 0$, and more importantly $u|_{\Omega}\approx w_\Omega$. Since $w_\Omega$ is an SLE, we see that $u|_{\Omega}$ is also an SLE (at least approximately). Set
\begin{equation}\label{eq:sle2}
\widehat{w}=\begin{cases} u-u^i\ \ &\mbox{in}\ \mathbb{R}^d\backslash\overline{\Omega},\\ u\ \ &\mbox{in}\ \Omega.  \end{cases}
\end{equation}
Clearly, $\widehat{w}$ is generated from a direct incidence on the inhomogeneity $\mathbf{n}^2$. $\widehat{w}\approx 0$ in $\mathbb{R}^d\backslash\overline{\Omega}$ and $\widehat{w}\approx w_\Omega$. That is, $\widehat{w}$ is localized around $\partial\Omega$, which exhibits a similar behaviour to the SPR oscillation. In what follows, we refer to $\widehat{w}$ as a pseudo plasmon resonant (PSPR) mode. In Fig. \ref{fig:Plasmon}~(a)--(c), we present a numerical illustration of the generation of a PSPR mode.

We next propose a potential sensing application of the PSPR mode. Let $(\Omega, \mathbf{n}^2)$ be an a-priori known inhomogeneity. Due to a certain reason, it is supposed that $\partial\Omega$ has some fine defects, namely, the support of the inhomogeneity actually becomes $\partial\widetilde{\Omega}$. Following the spirit of SPR sensing, one can detect the boundary defects as follows. Let $u^i$ be an incident field that can generate a PSPR $\widehat{w}$ associated with $(\Omega, \mathbf{n}^2)$ as above. The field impinges on $(\widetilde{\Omega}, \mathbf{n}^2)$, and we let $\widetilde{w}$ be the associated field according to \eqref{eq:sle2}. In Fig. \ref{fig:Plasmon}(e) and \ref{fig:Plasmon}(f), we present the corresponding numerical results. It can be seen that the difference $\widetilde w-\widehat{w}$ is highly sensitive with respect to the boundary defects $\partial\widetilde\Omega-\partial\Omega$. Hence, by the SPRS sensing, one can easily identify the existence of the fine boundary defects. It would be interesting to proceed further to recover such fine boundary defects by using the sensing data $\widetilde w-\widehat{w}$, which we choose to present in a forthcoming paper.

\begin{figure}[h]
\subfigure[incident field $u^i$]
{\includegraphics[width=0.33\textwidth]{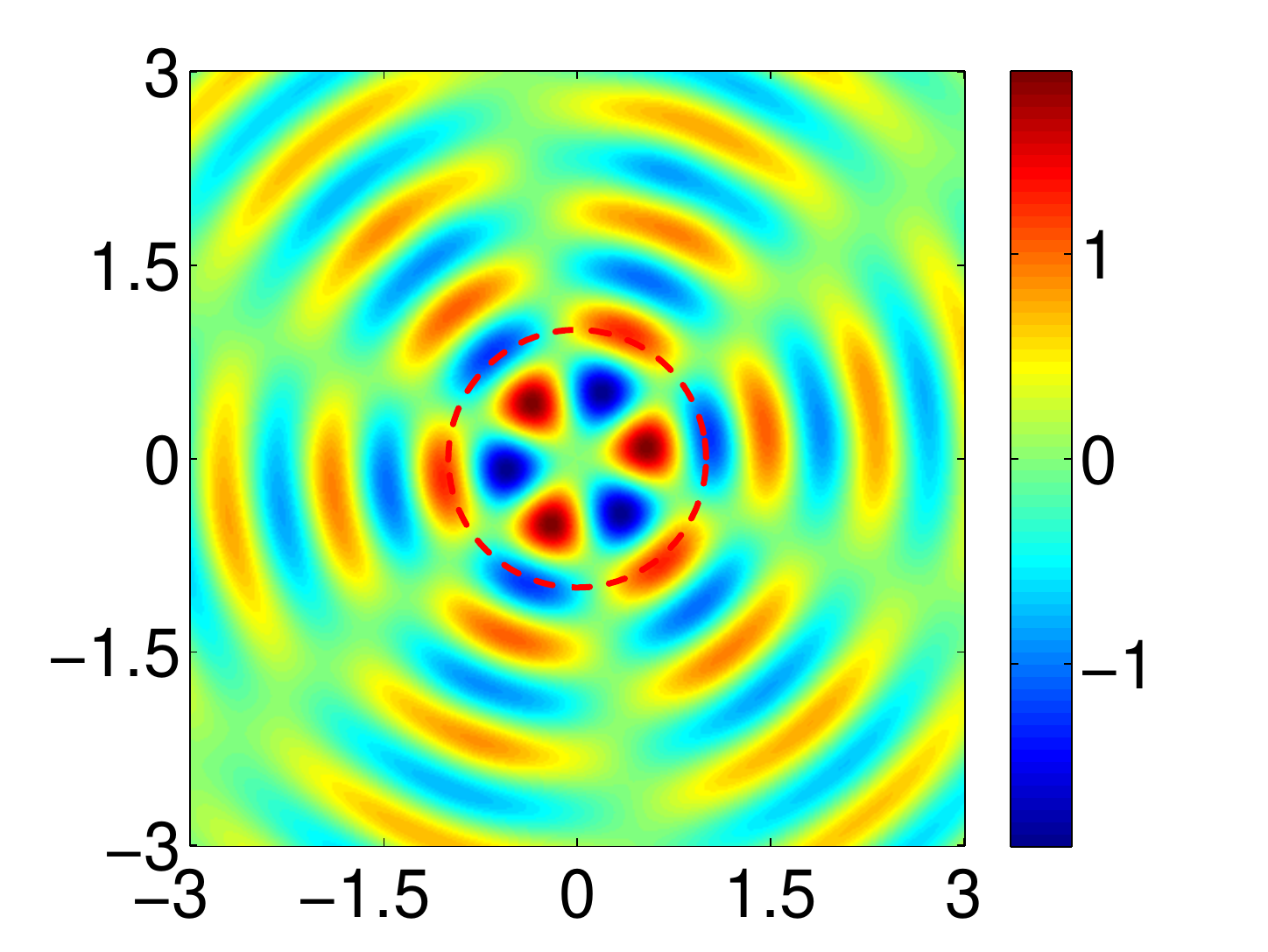}}
\subfigure[total field $u$]
{\includegraphics[width=0.33\textwidth]{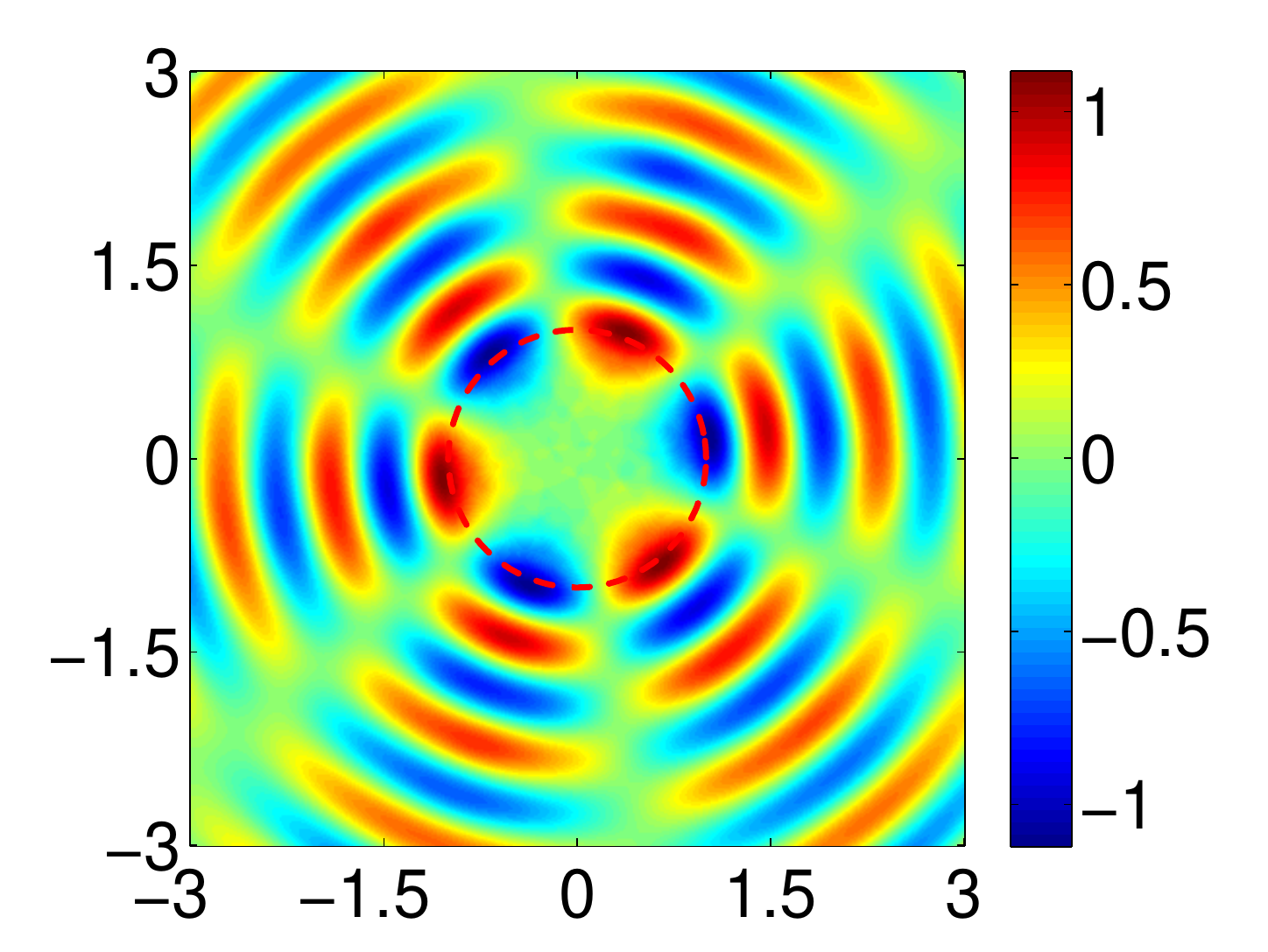}} 
\subfigure[generated PSPR mode $\widehat{w}$]
{\includegraphics[width=0.33 \textwidth]{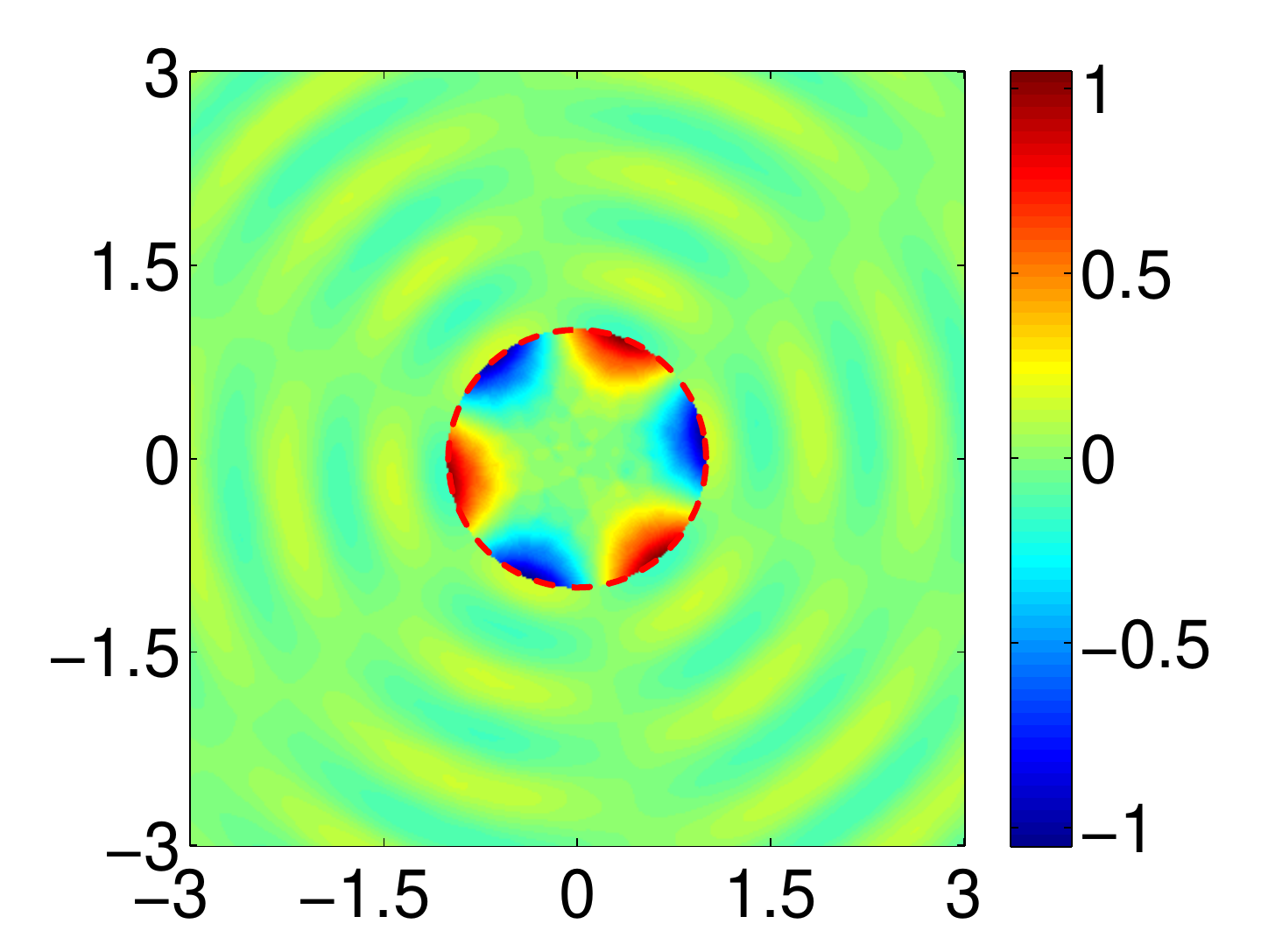}}\\
\subfigure[incident field $u^i$]
{\includegraphics[width=0.33 \textwidth]{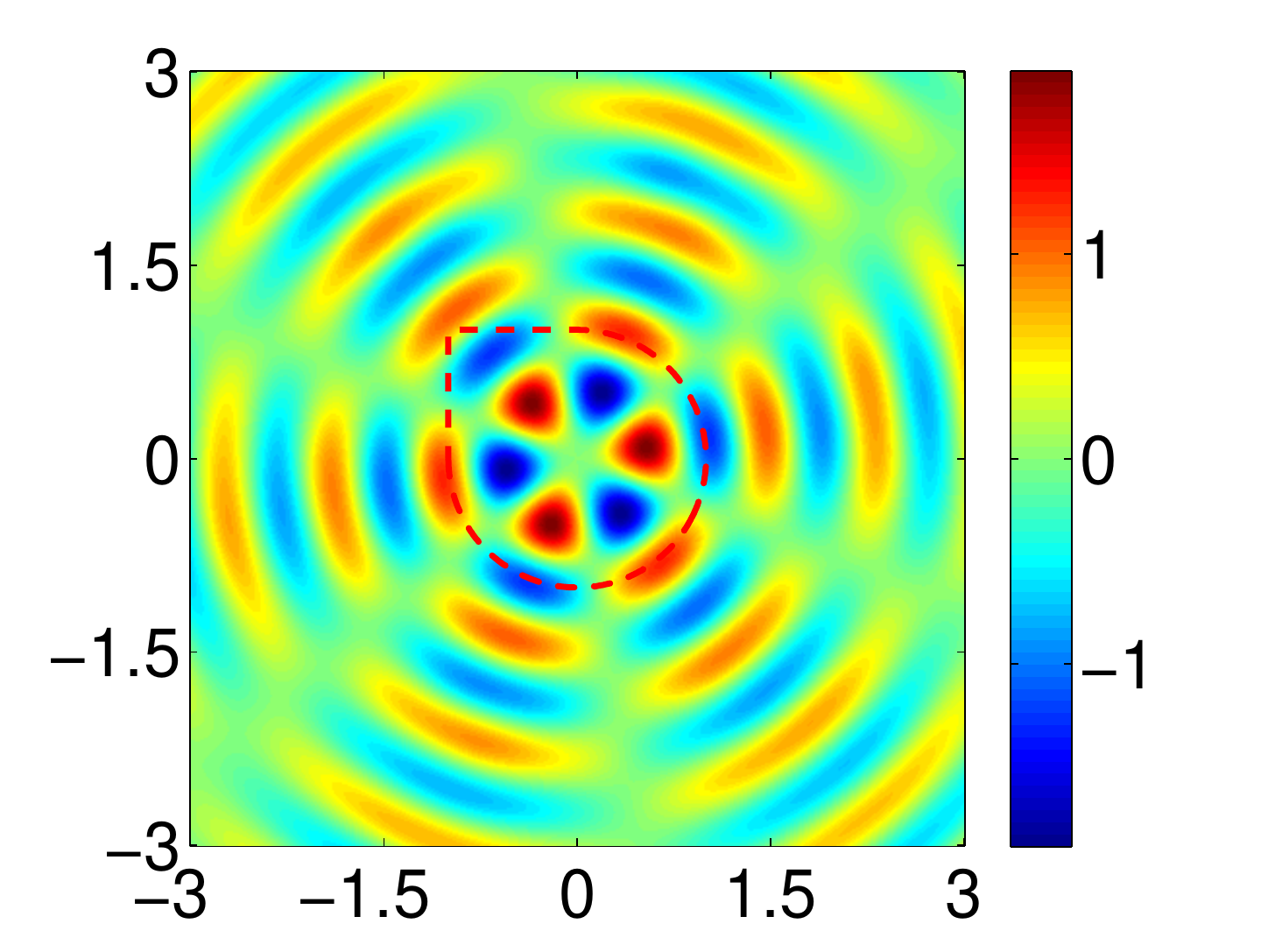}}
\subfigure[total field $\widetilde{u}$]
{\includegraphics[width=0.33\textwidth]{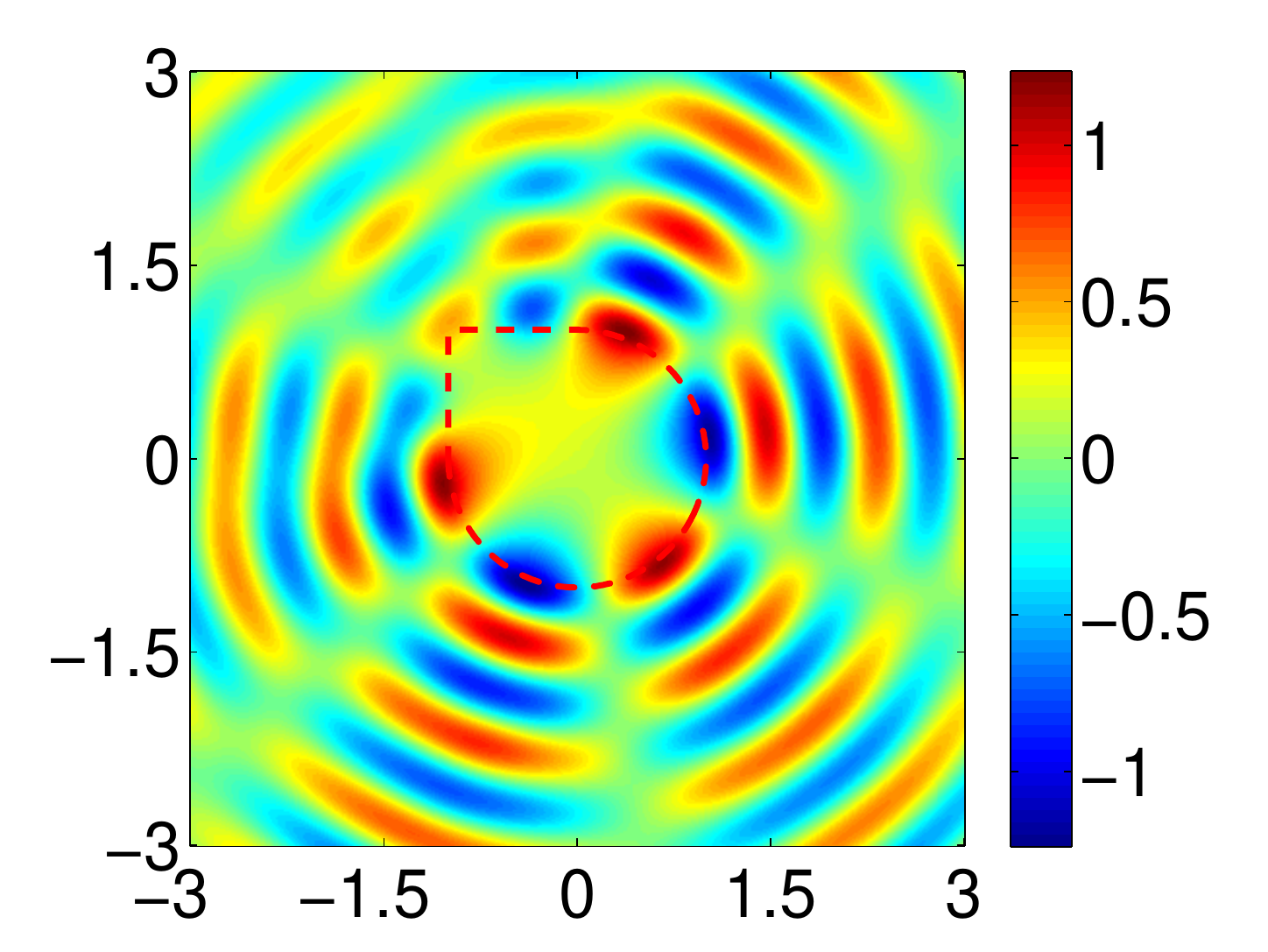}} 
\subfigure[generated PSPR mode $\widetilde{w}$]
{\includegraphics[width=0.33\textwidth]{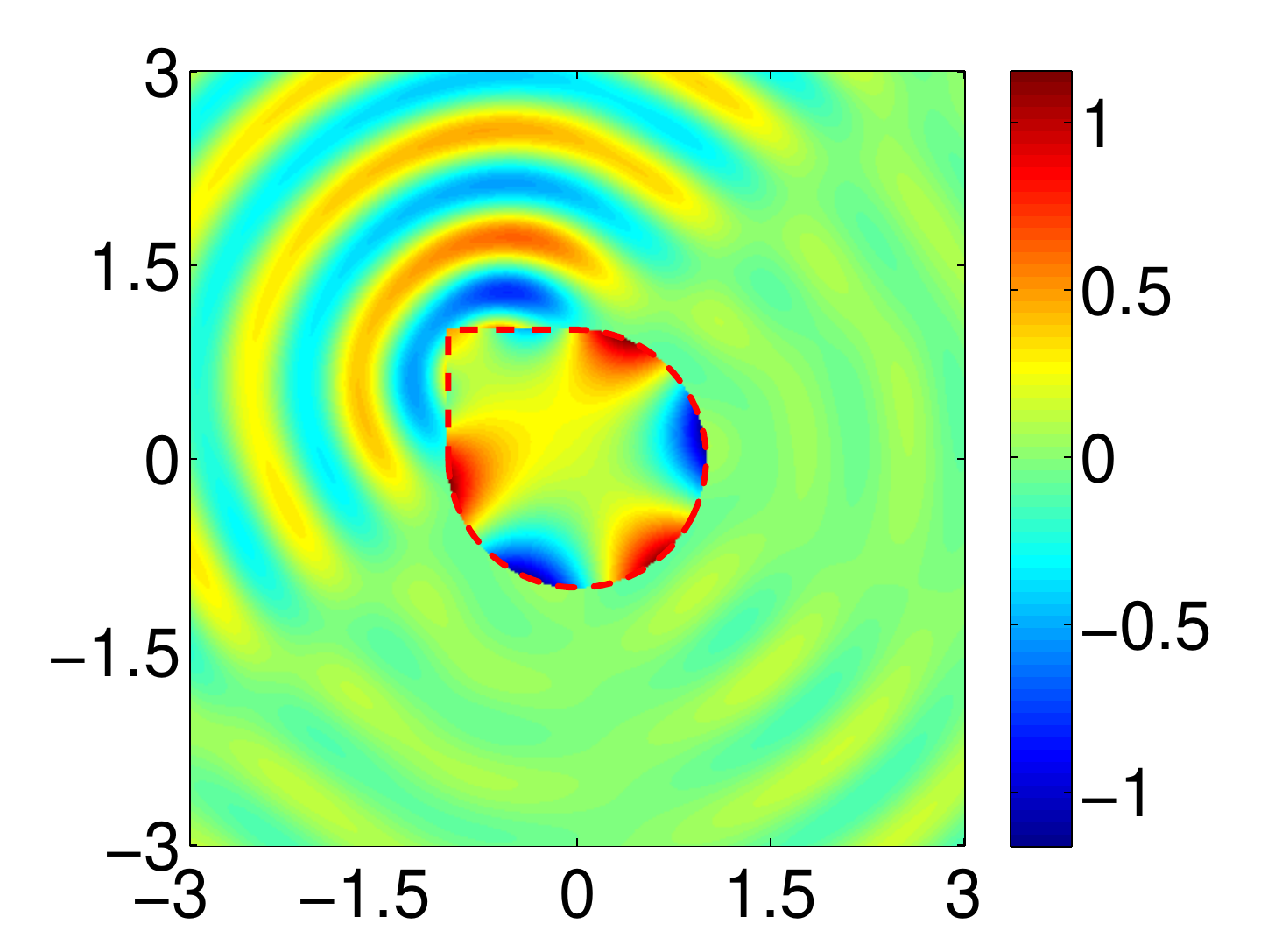}}\\
\caption{\label{fig:Plasmon} Generation of the PSPR mode, where $k=7.6548$ and $\mathbf{n}=1/4$. $(u, \widehat{w})$ and $(\widetilde u, \widetilde w)$, respectively, denote the corresponding fields associated with $(\Omega, \mathbf{n}^2)$ and $(\widetilde\Omega, \mathbf{n}^2)$. }
\end{figure}

\section{Concluding remarks}

In this paper, we present the discovery of a certain intriguing global geometric structure of the transmission eigenfunctions. It is shown that there exist the so-called SLEs. We rigorously and comprehensively justify this spectral property in the radial geometry case. For the general case, we conducted extensive numerical experiments, which not only verified such a spectral property but also revealed many delicate and subtle quantitative behaviours of the SLEs. The results derived in this paper not only unveil an important spectral phenomenon that was unknown before, but also generat some applications of practical values.  We apply the spectral results to develop a super-resolution wave imaging scheme and also propose a procedure of generating the so-called PSPR mode, which has the potential to be used in sensing technology.

The focus of this paper is to present the discovery of the global geometric structure of the transmission eigenfunctions as well as its implication to the wave localization with potential applications of practical importance. There are many subtle issues for the study in this work that would need to be fully developed: the theoretical justification of the SLEs in the general scenario \cite{CHLW}; more numerical experiments should be conducted for the proposed super-resolution imaging scheme including the 3D case and the variable refractive inhomogeneities, which require a huge amount of computations; and the further recovery of the fine boundary defect by using the PSPR modes. We shall explore those issues in our forthcoming works. Moreover, our study opens up a new field of research on the global properties of transmission eigenfunctions with many potential developments.

\section*{Acknowledgment}
The work of Y. Deng was supported by NSF grant of China No. 11971487 and NSF grant of Hunan No. 2020JJ2038. The work of H. Liu was supported by a startup grant from City University of Hong Kong and Hong Kong RGC General Research Funds (projects 12301218, 12302919 and 12301420).  The work of X. Wang was supported by  the Hong Kong Scholars Program grant under No. XJ2019005.


\begin{thebibliography}{99}

\bibitem{Abramowitz} M. Abramowitz and I. A. Stegun, {\it Handbook of Mathematical Functions with Formulas, Graphs, and Mathematical Tables}, Dover Publications, New York, 1965.


\bibitem{Amm0} H. Ammari, Y. Chow and J. Zou, {\it Super-resolution in imaging high contrast targets from the perspective of scattering coefficients}, J. Math. Pures Appl. (9), \textbf{111} (2018), 191--226.


\bibitem{Amm2} H. Ammari, G. Ciraolo, H. Kang, H. Lee and G. W. Milton, {\it Spectral theory of a Neumann-Poincar\'e-type
operator and analysis of cloaking due to anomalous localized resonance}, Arch. Ration. Mech. Anal., \textbf{208} (2013),
667--692.



\bibitem{Arens03} T. Arens, {\it Why linear sampling works}, Inverse Problems, {\bf 20} (2003), no. 1, 163--173.



\bibitem{BS} D. J. Bergman and M. I. Stockman, {\it Surface plasmon amplification by stimulated emission of radiation:
quantum generation of coherent surface plasmons in nanosystems}, Phys. Rev. Lett., \textbf{90} (2003), 027402.

\bibitem{Blasten18a} E. Bl\aa sten, {\it Nonradiating sources and transmission eigenfunctions vanish at corners and edges}, SIAM Journal on Mathematical Analysis, {\bf 50} (2018), no. 6, 6255--6270.

\bibitem{BLin} E. Bl{\aa}sten and Y.-H. Lin, {\it Radiating and non-radiating sources in elasticity}, Inverse Problems, \textbf{35} (2019), no. 1, 015005.

\bibitem{Blasten16} E. Bl\aa sten and H. Liu, {\it On corners scattering stably and stable shape determination by a single far-field pattern}, Indiana Univ. Math. J., in press, 2019.

\bibitem{BL1} E. Bl{\aa}sten and H. Liu, {\it On vanishing near corners of transmission eigenfunctions}, J. Funct. Anal., \textbf{273} (2017), 3616--3632. Addendum: arXiv:/1710.08089

\bibitem{BL2} E. Bl{\aa}sten and H. Liu, {\it Scattering by curvatures, radiationless sources, transmission eigenfunctions and inverse scattering problems}, arXiv: 1808.01425, 2018.

\bibitem{BL4} {E. Bl{\aa}sten and H. Liu}, {\it Recovering piecewise-constant refractive indices by a single far-field pattern}, {Inverse Problems}, \textbf{36} (2020), 085005.

\bibitem{BL3}  E. Bl{\aa}sten, X. Li, H. Liu and Y. Wang, {\it On vanishing and localizing of transmission eigenfunctions near singular points:  a numerical study}, Inverse Problems, \textbf{33} (2017),105001.

\bibitem{BXL} E. Bl{\aa}sten, H. Liu and J. Xiao, {\it On an electromagnetic problem in a corner and its applications}, {Analysis \& PDE}, in press, 2020.

\bibitem{BPS} E. Bl{\aa}sten, L. P\"aiv\"arinta and J. Sylvester, {\it Corners always scatter}, Commun. Math. Phys., \textbf{331} (2014), 725--753.

\bibitem{CCH} F. Cakoni, D. Colton, and H. Haddar, {\it Inverse Scattering Theory and Transmission Eigenvalues},
SIAM, Philadelphia, 2016.

\bibitem{Cakoni10b} F. Cakoni, D. Colton and H. Haddar, {\it On the determination of Dirichlet or transmission eigenvalues from far field data}, Comptes Rendus Mathematique, {\bf 348} (2010), no. 7--8, 379--383.

\bibitem{CX} F. Cakoni and J. Xiao, {\it On corner scattering for operators of divergence form and applications to inverse scattering}, Comm. PDE, in press, 2020.

\bibitem{CDL} X. Cao, H. Diao and H. Liu, {\it Determining a piecewise conductive medium body by a single far-field measurement}, CSIAM Trans. Appl. Math., DOI:10.4208/csiam-am.2020-0020

\bibitem{CHLW} Y. T. Chow, Y. Deng, Y. He, H. Liu and X. Wang, {\it On surface localization of transmission eigenfunctions}, in preparation, 2020.

\bibitem{Colton00} D. Colton, A. Kirsch and P. Monk, {\it The linear sampling method in inverse scattering theory}, Surveys on Solution Methods for Inverse Problems. Springer, (2000), 107--118.


\bibitem{CK} D. Colton and R. Kress, {\it Inverse Acoustic and Electromagnetic Scattering Theory}, 4th. ed.,
Springer, New York, 2019.

\bibitem{Colton}   D. Colton, P. Monk and J. Sun ,  {\it Analytical and computational methods for transmission eigenvalues}, Inverse Problems, \textbf{26} (2010), 045011.

\bibitem{DCL} H.  Diao,  X.  Cao  and  H.  Liu,  {\it On the geometric structures of transmission eigenfunctions with  a  conductive  boundary  condition  and  applications}, Comm. Partial Differential Equations, DOI: 10.1080/03605302.2020.1857397 

\bibitem{Elg} A. Elgart, G. M. Graf and J. H. Shenker, {\it Equality of the bulk and the edge Hall conductances in a mobility
gap}, Commun. Math. Phys., \textbf{259} (2005), 185--221.




\bibitem{FM} D. R. Fredkin and I. D. Mayergoyz, {\it Resonant behavior of dielectric objects (electrostatic resonances)},
Phys. Rev. Lett., \textbf{91} (2003), 253902.

\bibitem{N1}  {R. Griesmaier}, {\it Multi-frequency orthogonality sampling for inverse obstacle scattering problems}, Inverse Problems, \textbf{27} (2011), no. 8, 085005, 23 pp.

\bibitem{Hal} B. I. Halperin, {\it Quantized Hall conductance, current-carrying edge states, and the existence of extended
states in a two-dimensional disordered potential}, Phys. Rev. B, \textbf{25} (1982), 2185--2190.

\bibitem{Hald} F. D. M. Haldane and S. Raghu, {\it Possible realization of directional optical waveguides in photonic crystals
with broken time-reversal symmetry}, Phys. Rev. Lett., \textbf{100} (2008), 013904.

\bibitem{Hat} Y. Hatsugai, {\it The Chern number and edge states in the integer quantum hall effect}, Phys. Rev. Lett., \textbf{71} (1993), 3697--3700.


\bibitem{N2} {K. Ito, B. Jin and J. Zou}, {\it A direct sampling method for inverse electromagnetic medium scattering},
{Inverse Problems}, {\bf 29} (2013), no. 9, 095018, 19 pp.




\bibitem{Kha} A. B. Khanikaev, S. H. Mousavi, W.-K. Tse, M. Kargarian, A. H. MacDonald and G. Shvets, {\it Photonic
topological insulators}, Nat. Mater., \textbf{12} (2013), 233--239.

\bibitem{Kir} A. Kirsch, {\it The denseness of the far field patterns for the transmission problem}, IMA J. Appl. Math., \textbf{37} (1986), 213--225.

\bibitem{KirGrin} A. Kirsch and N. Grinberg, {\it The Factorization Method for Inverse Problems}, Oxford Lecture Series in Mathematics and its Applications, 36. Oxford University Press, Oxford, 2008.

\bibitem{Kli} V. V. Klimov, {\it Nanoplasmonics}, CRC Press, 2014.

\bibitem{Kor02} B. G. Korenev, {\it Bessel  functions and their applications}, Chapman \& Hall/CRC, 2002.


\bibitem{LL} H. Li and H. Liu, {\it On anomalous localized resonance and plasmonic cloaking beyond the quasi-static limit}, Proc. R. Soc. A, \textbf{474} (2018),20180165.

\bibitem{LLWn1}
{J.~Li, H.~Liu, Q.~Wang}, {\it Fast imaging of electromagnetic scatterers by a two-stage multilevel sampling method}, {Discrete Contin. Dyn. Syst. Ser. S}, {\bf 8}, (2015), no. 3, 547--561.

\bibitem{LLT} S. G. Lipson, H. Lipson and D. S. Tannhauser, {\it Optical Physics}, Cambridge University Press, 1995.

\bibitem{Liu} H. Liu, {\it On local and global structures of transmission eigenfunctions and beyond}, Journal of Inverse and Ill-posed Problems, doi.org/10.1515/jiip-2020-0099

\bibitem{LLWW} H. Liu, X. Liu, X. Wang and Y. Wang, {\it On a novel inverse scattering scheme using resonant modes with enhanced imaging resolution}, Inverse Problems, \textbf{35} (2019), 125012.

\bibitem{LZ07} H. Liu and J. Zou, {\it Zeros of the Bessel  and spherical Bessel  functions and their applications for uniqueness in inverse acoustic obstacle scattering}, IMA journal of applied mathematics, \textbf{72} (2007), no. 6, 817--831.

\bibitem{LiuIP17}
{X.~Liu}, {\it A novel sampling method for multiple multiscale
targets from scattering amplitudes at a fixed frequency}, {
Inverse Problems}, {\bf33}, (2017), 085011.


\bibitem{Luc13} R. Luc, {\it Spectral analysis on interior transmission eigenvalues}, Inverse Problems, {\bf 29} (2013), 104001.



\bibitem{MN} G. W. Milton and N.-A. P. Nicorovici, {\it On the cloaking effects associated with anomalous localized resonance},
Proc. R. Soc. A, \textbf{462} (2006), 3027--3059.

\bibitem{OI} F. Ouyang and M. Isaacson, {\it Surface plasmon excitation of objects with arbitrary shape and dielectric
constant}, Philos. Mag., \textbf{60} (1989), 481--492.

\bibitem{PS} L. P\"aiv\"arinta and J. Sylvester, {\it Transmission eigenvalues}, SIAM J. Math. Anal., \textbf{40} (2008), 738--753.


\bibitem{Qu} C. K. Qu and R. Wong, {\it ``Best possible'' upper and lower bounds for the zeros of the Bessel function $J_{\nu}(x)$}, Trans. Am. Math. Soc., \textbf{351} (2008), 2833-2859.

\bibitem{RZ} M. C. Rechtsman, J. M. Zeuner, Y. Plotnik, Y. Lumer, D. Podolsky, F. Dreisow, S. Nolte, M. Segev,
and A. Szameit, {\it Photonic Floquet topological insulators}, Nature, \textbf{496} (2013), 196.


\bibitem{Sylvester12} J. Sylvester, {\it Discreteness of transmission eigenvalues via upper triangular compact operators}, SIAM Journal on Mathematical Analysis, {\bf 44} (2012), no. 1, 341--354.


\bibitem{Tho} D. J. Thouless, M. Kohmoto, M. P. Nightgale and M. Den Nijs, {\it Quantized hall conductance in a two dimensional
periodic potential}, Phys. Rev. Lett., \textbf{49} (1982), 405.

\bibitem{Wang} Z. Wang, Y. D. Chong, J. D. Joannopoulos and M. Soljacic, {\it Reflection-free one-way edge modes in a
gyromagnetic photonic crystal}, Phys. Rev. Lett., \textbf{100} (2008), 013905.

\bibitem{Weck04} N. Weck, {\it Approximation by Herglotz wave functions}, Mathematical methods in the applied sciences, {\bf 27} (2004), no. 2, 155--162.



\bibitem{Z} S. Zeng, D. Baillargeat, H. P. Ho and K. T. Yong, {\it Nanomaterials enhanced surface plasmon resonance for biological and chemical sensing applications}, Chemical Society Reviews, \textbf{43} (2014), 3426--3452.




\end{thebibliography}
\end{document}